\documentclass{compositio}
\usepackage{amsmath, amsfonts, amssymb, amscd, latexsym, graphicx, psfrag, color, float, tikz}
\usepackage[all]{xy}
\usepackage{ifpdf}


\newtheorem{dummy}{dummy}[section]
\newtheorem{lemma}[dummy]{Lemma}

\newtheorem{theorem}[dummy]{Theorem}

\newtheorem{proposition}[dummy]{Proposition}
\theoremstyle{definition}
\newtheorem{definition}[dummy]{Definition}
\newtheorem{example}[dummy]{Example}
\newtheorem{remark}[dummy]{Remark}


\newcommand{\bA}{\mathbb{A}}
\newcommand{\bC}{\mathbb{C}}

\newcommand{\bG}{\mathbb{G}}

\newcommand{\bQ}{\mathbb{Q}}
\newcommand{\bR}{\mathbb{R}}
\newcommand{\bZ}{\mathbb{Z}}



\newcommand{\cB}{\mathcal{B}}

\newcommand{\cE}{\mathcal{E}}
\newcommand{\cF}{\mathcal{F}}

\newcommand{\cI}{\mathcal{I}}

\newcommand{\cL}{\mathcal{L}}

\newcommand{\cQ}{\mathcal{Q}}
\newcommand{\cR}{\mathcal{R}}

\newcommand{\cS}{\mathcal{S}}
\newcommand{\cT}{\mathcal{T}}
\newcommand{\cU}{\mathcal{U}}











\newcommand{\Perf}{\mathcal{P}\mathrm{erf}}

\newcommand{\op}{\mathrm{op}}

\DeclareMathOperator{\Edge}{Edge}



\begin{document}

\title[Topological Fukaya category and mirror symmetry for punctured surfaces
]{Topological Fukaya category and mirror symmetry for punctured surfaces
}

\begin{abstract} In this paper we 
establish a version of homological mirror symmetry for punctured Riemann surfaces. Following a proposal 
of Kontsevich we model A-branes on a punctured surface $\Sigma$ via the  topological Fukaya category. We prove that the topological Fukaya category of $\Sigma$ is equivalent to the category of matrix factorizations of a certain mirror LG model $(X,W)$. Along the way we establish new gluing 
results for the topological Fukaya category of punctured surfaces which are of independent interest.   
\end{abstract}

\author{James Pascaleff}
\email{jpascale@illinois.edu}
\address{Department of 
Mathematics, University of Illinois at Urbana-Champaign, 1406 West Green Street, Urbana, IL 61801, US}

\author{Nicol\`o Sibilla}
\email{N.Sibilla@kent.ac.uk}
\address{Max Planck Institute for Mathematics, Bonn, Germany}
\curraddr{School of Mathematics, Statistics and Actuarial Science (SMSAS), University of Kent, Sibson Building, Parkwood Road, Canterbury, CT2 7FS, UK}

\classification{18E30, 53D37}
\keywords{Fukaya category of surfaces, mirror symmetry, toric Calabi-Yau threefold}
\maketitle


\section{Introduction}
The Fukaya category is an   intricate invariant of symplectic manifolds. 
One of the many subtleties of the theory is that pseudo-holomorphic discs, which control compositions of morphisms in the Fukaya category, are global in nature. As a consequence,  there is no way to calculate  the Fukaya category of a general symplectic manifold by breaking it down into local pieces.\footnote{See however recent proposals of Tamarkin \cite{Ta} and Tsygan \cite{Ts}.}  
In the case of exact symplectic manifolds, however, the Fukaya category is expected to have good local-to-global properties. For instance, if $S=T^*M$ is 
the cotangent bundle of an analytic variety, this follows from work of Nadler and Zaslow. They prove in \cite{NZ, N0}  
that the (infinitesimal) Fukaya category of $S$ is equivalent to the category of constructible sheaves on the base manifold $M$. This implies in particular that the Fukaya category of $S$ localizes as a sheaf of categories over $M$.

Recently Kontsevich \cite{Ko} has proposed that the Fukaya category of a Stein manifold $S$ can be described in terms of a (co)sheaf of categories on a skeleton of $S$. A skeleton is, roughly, a half-dimensional CW complex $X$ embedded in $S$ as a Lagrangian deformation retract. According to Kontsevich,  $X$ should carry a cosheaf of categories, which we will denote $\cF^{top}$, that  encodes in a universal way the local geometry of the singularities of $X$. He conjectures that the global sections of 
$\cF^{top}$ on $X$ should be equivalent to the  wrapped  Fukaya category of $S$.

Giving a rigorous definition of  the cosheaf 
$\cF^{top}$ is subtle. Work of several authors has clarified the case
of punctured Riemann surfaces \cite{DK, N, STZ}, while 
 generalizations to higher dimensions have been 
 pursued in \cite{N3, N4}. The theory is considerably easier in complex dimension one  because skeleta of punctured Riemann surfaces, also known as ribbon graphs or spines, have a simple and well studied combinatorics and geometry, while the higher dimensional picture is only beginning to emerge \cite{RSTZ, N5}. Implementing Kontsevich's ideas, the formalism developed in \cite{DK, N, STZ} defines a covariant functor $\cF^{top}(-)$ from a category of ribbon graphs and open inclusions to triangulated dg categories.

An important feature of the theory is that, if $X$ and $X'$ are two distinct compact skeleta of a punctured surface 
$\Sigma$, there is an equivalence 
$$
\cF^{top}(X) \simeq \cF^{top}(X'). 
$$
We will refer to $\cF^{top}(X)$ as the \emph{topological Fukaya category} of $\Sigma$, and we denote it $Fuk^{top}(\Sigma)$. In this paper we take $Fuk^{top}(\Sigma)$ as 
a model for the category of A-branes on $\Sigma$. We prove homological mirror symmetry for  punctured Riemann surfaces by showing that $Fuk^{top}(\Sigma)$ is equivalent to the category of B-branes on a mirror geometry LG model.  

\subsection{Hori-Vafa homological mirror symmetry} 
Let us review the setting of Hori-Vafa mirror symmetry for LG models \cite{HV, GKR}. Let $X$  be a toric threefold with trivial canonical bundle. The fan of $X$ can be realized as a smooth subdivision of the cone over a two-dimensional lattice polytope, see Section \ref{sec tcy} for more details. The height function on the fan of $X$ gives rise to a regular map 
$$
W: X \rightarrow \bA^1, 
$$
which is called the \emph{superpotential}. 
The category of B-branes for the LG model  $(X, W)$ is the $\bZ_2$-graded category of matrix factorizations $MF(X,W)$. The mirror of the LG-model 
$(X, W)$  is a smooth algebraic curve 
$\Sigma_W$ in $\bC^* \times \bC^*$,  called the \emph{mirror curve} (see Section \ref{Hori-Vafa}). The following is our main result.  
\begin{theorem}[Hori-Vafa homological mirror symmetry]
\label{mainthrm}
There is an equivalence 
$$
Fuk^{top}(\Sigma_W) \simeq MF(X, W). 
$$
\end{theorem}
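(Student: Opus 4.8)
The plan is to prove the theorem by a local-to-global argument: decompose the mirror curve $\Sigma_W$ into pairs of pants glued along cylinders, match this decomposition with the covering of the toric threefold $X$ by affine charts, verify Hori--Vafa mirror symmetry for the elementary pieces, and assemble the global statement by descent on both sides.

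First I would set up the combinatorics that links the two sides. A smooth toric threefold with trivial canonical bundle is the same thing as a unimodular triangulation $\mathcal{T}$ of a two-dimensional lattice polytope $P$: placing the vertices, edges and triangles of $\mathcal{T}$ at height one and coning produces the fan of $X$. On the B-side a triangle $\sigma$ gives a chart $U_\sigma \cong \bA^3$ on which the height superpotential restricts, up to a unit, to the product of the three coordinates, while an interior edge gives an overlap $U_\sigma \cap U_{\sigma'} \cong \bA^2 \times \Gm$ on which $W$ becomes $x_1 x_2$ times a unit; crucially, on cones of dimension at most one the restricted $W$ has no critical point over $0$, so those charts contribute trivial categories of matrix factorizations, and in the Zariski--\v{C}ech complex computing $MF(X,W)$ only the triangles and interior edges survive. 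On the A-side, the same triangulation $\mathcal{T}$ is precisely the combinatorial type of a pair-of-pants decomposition of $\Sigma_W$ --- read off from the amoeba and tropical picture of the defining Laurent polynomial --- with a three-punctured sphere for every triangle and a gluing cylinder for every interior edge. I would fix a ribbon-graph spine of $\Sigma_W$ adapted to this decomposition (legitimate since $\cF^{top}$ does not depend on the chosen spine) and organize everything over the poset of cells of $\mathcal{T}$.

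The technical heart is the gluing theorem advertised in the abstract: if a punctured surface $\Sigma$ is obtained by gluing $\Sigma_1$ and $\Sigma_2$ along cylindrical ends, then
$$
Fuk^{top}(\Sigma) \;\simeq\; Fuk^{top}(\Sigma_1) \times_{Fuk^{top}(\bC^{*})} Fuk^{top}(\Sigma_2),
$$
a homotopy fibre product, and more generally $Fuk^{top}(\Sigma_W)$ is the homotopy limit of the \v{C}ech diagram of the $Fuk^{top}$ of the pants and the gluing cylinders. What must be proved here is not merely a Mayer--Vietoris statement for the cosheaf $\cF^{top}$ along such covers, but the upgrade of its built-in covariant (cosheaf, colimit) functoriality to genuine descent presented as a \emph{limit} with restriction functors --- so that it matches the contravariant Zariski descent $MF(X,W) \simeq \lim\bigl(\prod_\sigma MF(U_\sigma,W) \rightrightarrows \prod_{\sigma,\sigma'} MF(U_\sigma\cap U_{\sigma'},W)\bigr)$ on the mirror. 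This reconciliation --- a Verdier-type duality for $\cF^{top}$ on ribbon graphs, available because the relevant local categories are smooth and the corestriction functors admit the required adjoints --- is where the real work lies.

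It then remains to identify the two descent diagrams term by term and map by map. The base case is local Hori--Vafa mirror symmetry for the pair of pants, $Fuk^{top}(\text{three-punctured sphere}) \simeq MF(\bA^3, x_1x_2x_3)$, together with the compatible statement $Fuk^{top}(\bC^{*}) \simeq MF(\bA^2\times\Gm, x_1x_2)$ for the gluing cylinder; both can be extracted from the explicit presentations of the two sides already in the literature (Dyckerhoff--Kapranov and Sibilla--Treumann--Zaslow on the A-side; the singularity-category description and Abouzaid--Auroux--Efimov--Katzarkov--Orlov on the B-side). The content is to check that these equivalences are natural with respect to the inclusion of a cylinder into a pair of pants on the A-side and the corresponding restriction $U_\sigma\cap U_{\sigma'}\hookrightarrow U_\sigma$ on the B-side, i.e.\ that they glue to an equivalence of \v{C}ech diagrams; taking homotopy limits then yields $Fuk^{top}(\Sigma_W) \simeq MF(X,W)$. (Throughout one works with the $\bZ_2$-folded $Fuk^{top}$, since $W$ is a torus weight and $MF(X,W)$ is $2$-periodic.) I expect the single genuine obstacle to be the limit-theoretic descent for $Fuk^{top}$ of the previous paragraph: turning the topological Fukaya category, which is manufactured as a colimit over a spine, into the limit over a pants decomposition that Zariski descent on the mirror forces upon us, and matching the gluing functors on the two sides against one another. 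The polytope bookkeeping and the single-pair-of-pants comparison are, by contrast, essentially routine once this is in place.
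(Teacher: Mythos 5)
Your overall strategy is the one the paper follows: Zariski descent for $MF^\infty$ over the toric charts (Theorem \ref{gluing MF}), a descent statement for the Ind-completed topological Fukaya category with respect to a cover by \emph{closed} subgraphs whose intersections are circles (Theorem \ref{gluing-main}), local mirror symmetry for the pair of pants as the base case, and a term-by-term identification of the resulting diagrams. You also correctly isolate the real technical content, namely that the cosheaf $\cF^{top}$ must be shown to satisfy a limit-type gluing along the cuffs, which is exactly the content of Theorem \ref{trrrr}.

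There is, however, a genuine gap in the step ``fix a ribbon-graph spine of $\Sigma_W$ adapted to this decomposition and take one global homotopy limit over the poset of cells of $\mathcal{T}$.'' Such a spine does not exist in general: it would have to contain pairwise disjoint cycles around every gluing curve, hence its intersection with a pair of pants glued along all three cuffs (which occurs whenever a triangle of $\mathcal{T}$ has all three edges interior, e.g.\ in any positive-genus example) would be a skeleton of the pair of pants with disjoint cycles at all three punctures --- and no such skeleton exists (the $\Theta$ graph has non-disjoint cycles, the dumbbell has a cycle at only two punctures; see Figure \ref{fig:popskeleta} and the parenthetical remark in the proof of Theorem \ref{the induction}). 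Consequently the single \v{C}ech diagram of exceptional restrictions you want to take the limit of cannot be written down. The paper circumvents this by an induction: Lemma \ref{lem:graph-decomp} shows the tropical graph can be built by attaching one trivalent vertex at a time along only one or two of its edges, and at each stage the skeleton is modified by contractions/expansions (Lemma \ref{lem:puncture-mod}) so that it has the required disjoint cycles for \emph{that} gluing only, with the restriction functors at the remaining punctures carried along as part of the inductive hypothesis. Your proof needs this (or an equivalent workaround) to go through; the rest --- the polytope bookkeeping, the truncation of the \v{C}ech complex of $MF$, and the single pair-of-pants comparison --- matches the paper.
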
  
Theorem \ref{mainthrm} provides a proof of homological mirror symmetry for punctured surfaces, provided that we model the category of A-branes via the topological Fukaya category. This extends  to all genera earlier results for curves of genus zero and one which were obtained in \cite{STZ} and \cite{DK}. We also mention work of 
Nadler, who studies both directions of Hori-Vafa mirror symmetry for higher dimensional pairs of pants \cite{N2, N3, N4}.

We learnt the statement of Hori-Vafa homological 
mirror symmetry for punctured  surfaces from the inspiring paper \cite{AAEKO}. In \cite{AAEKO} the authors prove homological mirror symmetry for punctured spheres. Their main theorem is parallel to our own (in genus zero) with the important difference that they work with the wrapped Fukaya category, rather than with its topological model. See also related work of Bocklandt \cite{Bo}.  Mirror symmetry for higher-dimensional pairs of pants was studied by Sheridan in \cite{Sh}.

Denote $Fuk^{wr}(\Sigma)$ the wrapped Fukaya category of a punctured surface $\Sigma$. Our main result combined with the main result of \cite{AAEKO} gives equivalences
$$
Fuk^{top}(\Sigma_W) \simeq MF(X, W) \simeq Fuk^{wr}(\Sigma_W), 
$$ 
for all Riemann surfaces 
$\Sigma_W$ which can be realized as unramified cyclic covers of punctured spheres. Thus, for this class of examples, the topological Fukaya category captures the wrapped Fukaya category, corroborating Kontsevich's proposal. We also remark that a proof of the equivalence between topological and 
wrapped Fukaya category, with different methods, appeared in the recent \cite{HKK}.

\begin{remark}  
When we were close to completing the project we learnt that Lee, in her thesis \cite{Le}, extends  the results of \cite{AAEKO} to all genera. Although our techniques are very different, conceptually the  
approach pursued in this work and in  Lee's are  closely related. 
The results of this paper are logically independent of those of \cite{Le}, 
since we use the topological version of the Fukaya category instead of
the version defined in terms of pseudo-holomorphic curves. 
\end{remark}

\begin{remark}
\label{firstlastremark}
Theorem \ref{mainthrm} gives a homological mirror symmetry picture for the wrapped Fukaya category of punctured Riemann surfaces. Our techniques can also be used to obtain more general mirror symmetry statements for \emph{partially wrapped} Fukaya categories: this corresponds to considering non-compact skeleta having semi-infinite edges that approach the punctures of the Riemann surface. In the parlance of partially wrapped Fukaya categories, the non-compact ends of the skeleton are called \emph{stops}: they encode the directions along which wrapping is not allowed.

For brevity, we limit ourselves to a sketch of the theory in this more general setting.  
Let  $S \subset \Sigma_W$ be such a skeleton with non-compact edges. Then $S$ determines a (non unique) \emph{stacky} partial \emph{compactification} $\widetilde{X}$ of $X$, which is no longer Calabi--Yau. The punctures of 
$\Sigma_W$ are in bijection with the non-proper irreducible components of the singular locus of $W$. These are all isomorphic to $\mathbb{A}^1$: considering a skeleton $S$ with $n$ non-compact edges approaching a given puncture corresponds to compactifying that copy of $\mathbb{A}^1$ to a stacky rational curve 
$\mathbb{P}^1(1, n)$.  

The superpotential in general will not extend to $\widetilde{X}$, and thus we cannot work with categories of matrix factorizations. We can however formulate our theory 
in terms of the more flexible formalism of the \emph{derived category of singularities}. Recall that if 
$X_0 = W^{-1}(0) \subset X$ is the fiber at $0 \in \mathbb{A}^1$ of the superpotential, by Orlov's Theorem \cite{O1}, there is an equivalence 
$$
MF(X, f) \simeq D_{sg}(X_0),
$$
where $D_{sg}(X_{0})$ denotes the derived category of singularities of $X_{0}$. Although the category of matrix factorizations might not be well defined after compactifying to 
$\widetilde{X}$, we can still make sense of the   derived category of singularities. 
Namely, 
denote by $\widetilde{X}_0 \subset \widetilde{X}$ the compactification of the the zero fiber 
$X_0$. 
The claim is that there is an equivalence of categories 
\begin{equation}
\label{partwraphms}
\cF(S) \simeq D_{sg}(\widetilde{X}_0)
\end{equation}
We expect that we could prove the homological mirror symmetry equivalence (\ref{partwraphms}) exactly in the same way as we prove Theorem \ref{mainthrm}. We refer the reader to Remark \ref{lastremark} in the main text for additional details.   
Let us also mention recent work of Lekili--Polishchuk who give 
a different picture of homological mirror symmetry for partially wrapped Fukaya categories of punctured surfaces in terms of \emph{Auslander orders} \cite{LeP}. 
\end{remark}
\subsection{The topological Fukaya category and pants decompositions} 
The technical core of the paper is a study of the way in which the topological Fukaya category interacts with pants decompositions. By construction 
$\cF^{top}(-)$ is a cosheaf of categories on the spine of a punctured surface. So locality is built into the definition of the topological Fukaya category.  
From a geometric perspective, this locality corresponds  to cutting up the surface into flat polygons having their vertices at the punctures.

In this paper we  prove that the topological Fukaya category of a 
punctured surface  satisfies also a different kind of local-to-global behavior: it  can be glued together from the Fukaya categories of the pairs of pants making up a pants decomposition of it. We believe that this result is of independent interest. We expect this to be a  feature of the topological Fukaya category in all dimensions, and we will return to this in future work. Based on recent parallel advances in the theory of the wrapped Fukaya category 
\cite{Le}, this seems to be a promising avenue to compare the wrapped and the topological pictures of the category of A-branes on Stein manifolds. 
In order to explain the gluing formula for pants decompositions 
we need to sketch first a construction that attaches to a tropical curve $G$ a category $\cB(G)$, full details can be found in 
Section \ref{plagrama}.

Let $\kappa$ be the ground field. 
We denote by $MF(X,f)$ the category of matrix factorizations of the function $f: X \to \bA^1_\kappa$ and by $Fuk^{top}(\Sigma)$ the topological Fukaya category of $\Sigma$.
We denote by $MF^{\infty}(X,f)$ and $Fuk^{top}_\infty(\Sigma)$  the Ind completions  of these categories.
 We attach to a vertex $v$ of $G$ the category 
$$
\cB(v) :=MF^{\infty}(\bA^3_\kappa, xyz),
$$ and to an edge $e$ of $G$ the category 
$$
\cB(e):=MF^{\infty}(\bG_m \times \bA_\kappa^2, yz), 
$$ 
where $y$ and $z$ are coordinates on $\bA_\kappa^2$. 
If a vertex $v$ is incident to an edge $e$ there is a restriction functor $\cB(v) \rightarrow \cB(e)$. We define $\cB(G)$ as the (homotopy) limit  of these restriction functors.

\begin{theorem}
\label{tlsba}
Let $\Sigma$ be an algebraic curve in $\bC^* \times \bC^ *$ and let $G$ be its tropicalization.  Then there is an equivalence
\begin{equation}
\label{eq111}
Fuk^{top}_\infty(\Sigma) \simeq \cB(G).
\end{equation}
\end{theorem}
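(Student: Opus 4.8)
The plan is to deduce the equivalence from two ingredients: the \emph{local} form of homological mirror symmetry for the pair of pants and the cylinder, available from the genus-zero case of \cite{STZ}, together with a \emph{gluing formula} presenting $Fuk^{top}_\infty(\Sigma)$ as a homotopy limit over a pants decomposition. First I would record that the tropicalization $G=\mathrm{Trop}(\Sigma)$ of a smooth curve $\Sigma\subset\bC^*\times\bC^*$ is a trivalent graph embedded in $\bR^2$ with no loops and no multiple edges (its bounded edges are straight segments and $\Sigma$ is smooth), and that $G$ governs a pants decomposition of $\Sigma$: a vertex $v$ determines a pair of pants $P_v\subset\Sigma$, a bounded edge $e=\{v,v'\}$ a gluing cylinder $A_e=P_v\cap P_{v'}$, and the unbounded edges account for the punctures. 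The $\{P_v\}$ form an open cover of $\Sigma$ whose pairwise intersections are the cylinders $A_e$ and whose triple intersections are empty, and the genus-zero case of \cite{STZ}, with Kn\"orrer periodicity for the cylinder, supplies equivalences $Fuk^{top}_\infty(P_v)\simeq MF^\infty(\bA^3_\kappa,xyz)=\cB(v)$ and $Fuk^{top}_\infty(A_e)\simeq MF^\infty(\bG_m\times\bA^2_\kappa,yz)=\cB(e)$. So it suffices to prove (a) that $Fuk^{top}_\infty(\Sigma)$ is the homotopy limit of the restriction functors attached to this cover, and (b) that under the equivalences above these geometric restrictions are identified with the algebraic restriction functors $\cB(v)\to\cB(e)$.

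For (a) I would fix a compact skeleton of $\Sigma$ adapted to the decomposition --- glued from the theta-graph skeleta $X_v$ of the $P_v$ along the core circles $C_e$ of the cylinders --- so that the cosheaf property of $\cF^{top}$ on skeleta, established in \cite{DK,N,STZ}, applies to the induced cover and identifies $Fuk^{top}(\Sigma)$ with the homotopy colimit, over the category $\cI$ whose objects are the vertices and bounded edges of $G$ and whose morphisms $e\to v$ record incidence, of the diagram $e\mapsto Fuk^{top}(A_e)$, $v\mapsto Fuk^{top}(P_v)$ with the covariant cosheaf structure maps; here one uses that this cover is good (overlaps the $C_e$, no triple overlaps) to regroup the colimit over all cells as a colimit over $\cI$. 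Passing to Ind-completions turns this into a colimit in $\mathrm{Pr}^L$ with left-adjoint transition functors, and the equivalence $\mathrm{Pr}^L\simeq(\mathrm{Pr}^R)^{\op}$ rewrites it as the homotopy limit over $\cI^{\op}$ of the diagram of right adjoints, which are exactly the restriction functors $Fuk^{top}_\infty(P_v)\to Fuk^{top}_\infty(A_e)$; since $\cI^{\op}$ is the indexing category of $\cB(G)$ (the unbounded edges of $G$ contribute trivially), this proves (a). Equivalently one can cut $\Sigma$ along the $C_e$ one at a time: each cut gives a homotopy pushout of dg categories, hence after Ind-completion and dualization a homotopy fiber product over $Fuk^{top}_\infty$ of the neck cylinder, and iterating --- with non-separating curves handled by the analogous coequalizer-type identification --- yields the same limit.

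For (b) I would trace a set of generators of $Fuk^{top}(P_v)$ (coming from the arcs joining the punctures, i.e. the standard generators of the cosheaf on the theta graph $X_v$) through the equivalence of \cite{STZ} and through the restriction functor $Fuk^{top}_\infty(P_v)\to Fuk^{top}_\infty(A_e)$, which is right adjoint to the functor induced by $C_e\hookrightarrow X_v$, and match their images with those of the generating matrix factorizations of $xyz$ under the restriction $MF^\infty(\bA^3_\kappa,xyz)\to MF^\infty(\bG_m\times\bA^2_\kappa,yz)$ coming from the open embedding $\{x\neq 0\}=\bG_m\times\bA^2\hookrightarrow\bA^3$. The substance here is to arrange the equivalences of \cite{STZ} to be natural with respect to these restrictions. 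Combining (a) and (b) yields $Fuk^{top}_\infty(\Sigma)\simeq\cB(G)$.

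I expect the main obstacle to be step (a): one must produce a skeleton of $\Sigma$ whose cell structure simultaneously refines the pants decomposition, so that the abstract cosheaf property of $\cF^{top}$ really computes the \v{C}ech colimit for the cover $\{P_v\}$, and one must make the colimit-to-limit passage rigorous, i.e. check that after Ind-completion the gluing functors are genuinely the left adjoints of a $\mathrm{Pr}^L$-valued diagram indexed by $G$, so that $\mathrm{Pr}^L\simeq(\mathrm{Pr}^R)^{\op}$ can be invoked. The naturality needed for (b) is the other delicate point, as it constrains how the local equivalences of \cite{STZ} must be set up.
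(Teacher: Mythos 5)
Your overall strategy (localize along the pants decomposition dictated by $G$, use the known equivalence for the pair of pants and the cylinder, and match restriction functors) is the same as the paper's, but step (a) as written has a genuine gap, and it is exactly where the paper has to work hardest. The cosheaf property of $\cF^{top}$ is for \emph{open} covers of the skeleton, and the open cover of $X$ induced by the cover $\{P_v\}$ of $\Sigma$ does not produce the \v{C}ech diagram defining $\cB(G)$: the overlap $X\cap A_e$ is not a circle but a ``wheel'' $\Lambda(n_1,n_2)$ (the core circle together with the spokes of $X$ that cross into the neighboring pants), whose category is $Rep^\infty(Q(n_1,n_2))$ (Lemma \ref{ququiver}), not $\cQ Coh^{(2)}(\bG_m)\simeq\cB(e)$; likewise $X\cap P_v$ is a theta graph with extra open edges, not a theta graph. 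Dualizing the open-cover colimit therefore yields a limit over the wrong local categories, with the open restriction functors $R^U_\infty$ rather than the exceptional restrictions $S^Z_\infty$ to the circles. Identifying that limit with the limit over circles and theta graphs is precisely the content of the paper's descent for \emph{closed} covers (Lemma \ref{lem:local}, Theorem \ref{gluing-main}) together with the compatibility of open and exceptional restrictions (Proposition \ref{prop:compr}); your proposal treats this identification as automatic, but it is the heart of the argument.

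There is a second obstruction to running the gluing ``all at once'' over the whole pants decomposition: whenever $G$ has a vertex all three of whose edges are bounded, the corresponding pair of pants would need a spine containing three disjoint boundary-parallel cycles, which is impossible since a spine of the pair of pants has first Betti number $2$ (the theta graph has non-disjoint cycles, the dumbbell has cycles at only two punctures). This is why the paper does not form a single \v{C}ech limit but instead argues by induction, attaching one trivalent vertex at a time along at most two edges; the tropical lemmas (Lemma \ref{lem:atleasttwo} through Lemma \ref{lem:graph-decomp}) guarantee such an ordering exists because $G$ always retains infinite edges, and Lemma \ref{lem:puncture-mod} is used at each step to re-deform the skeleton so that it has the cycles needed for the next gluing while keeping track of the restriction functors at the remaining punctures. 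Your step (b) correctly identifies the remaining naturality issue, but to make the proof go through you would need to replace the one-shot open-cover colimit with this inductive closed-cover gluing, or else prove the wheel-to-circle comparison and the skeleton-existence statements independently.
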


As proved in \cite{P}, $MF^\infty(X,f)$ is a sheaf of categories for the \'etale topology on $X$. This gives rise to an expression for $MF^\infty(X,f)$ which is exactly parallel to (\ref{eq111}). Our main theorem follows easily from here. 

\subsection{The topological Fukaya category and closed covers} 
The proof of Theorem \ref{tlsba} hinges on the key observation that the cosheaf $\cF^{top}(-)$  behaves like a sheaf  with respect to a certain type of \emph{closed covers}. This somewhat surprising property of $\cF^{top}(-)$ is very natural  from the viewpoint of  mirror symmetry, because it is mirror to Zariski descent of quasi-coherent sheaves and matrix factorizations.

We   formulate our gluing result in terms of the 
\emph{Ind-completion} $\cF^{top}_{\infty}(-)$ of $\cF^{top}(-)$. In order to simplify the exposition, in the statement below we do not specify all our assumptions on the closed cover. We refer the reader to the main text for the precise statement of our main gluing result, 
Theorem \ref{gluing-main}.

\begin{theorem}
\label{trrrr}
Let $X$ be a ribbon graph. 
\begin{itemize}
\item If $Z$ is a closed subgraph of $X$ there are restriction functors
$$
R: \cF^{top}(X) \rightarrow \cF^{top}(Z), \quad  
R_\infty: \cF^{top}_\infty(X) \rightarrow \cF_\infty^{top}(Z). 
$$
\item Let $Z_1$ and $Z_2$ be closed subgraphs of $X$, such that $Z_1 \cup Z_2=Z$. Assume that the underlying topological space of the intersection $Z_{12} = Z_1 \cap Z_2$ is a disjoint union of copies of $S^1.$ Then, under suitable assumptions on $Z_1$ and $Z_2,$ the  diagram 
$$
\xymatrix{
\cF_\infty^{top}(X) \ar[r] \ar[d] & \cF_\infty^{top}(Z_1) \ar[d]\\
\cF_\infty^{top}(Z_2) \ar[r] & \cF_\infty^{top}(Z_{12}) }
$$
is a homotopy fiber product of dg categories. 
\end{itemize}
\end{theorem}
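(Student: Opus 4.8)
The plan is to deduce the theorem from the cosheaf property of $\cF^{top}(-)$ together with an analysis of the vertex-star local models, in a way parallel to the observation --- emphasized in the introduction --- that the assertion is mirror to Zariski descent for $MF^\infty(X,f)$. Concretely: the cosheaf axiom (for \emph{open} covers) produces co-descent along the closed cover $Z=Z_1\cup Z_2$, i.e.\ a pushout, and the real content is that, after passing to Ind-completions, this pushout is \emph{also} a pullback. So the proof proceeds in three steps: (i) construct the restriction functors $R,R_\infty$; (ii) prove the closed Mayer--Vietoris pushout formula for $\cF^{top}$; (iii) upgrade it, over Ind-completions, to the homotopy fiber product, using that the restriction functors are localizations and that a Beck--Chevalley identity holds along the circles $Z_{12}$.

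\emph{Restriction functors.} Fix a closed subgraph $Z\subseteq X$, put $U=X\setminus Z$, and let $N\supseteq Z$ be the open neighbourhood obtained by attaching to $Z$ the half-open ``whiskers'' of $X$ emanating from it, so that $X=U\cup N$ and $U\cap N$ is a disjoint union of half-open intervals. I would first prove the fully faithfulness statement that the corestriction $\cF^{top}(U)\to\cF^{top}(X)$ along the open inclusion is fully faithful, with semiorthogonal complement generated by the image of a corestriction from a neighbourhood of $Z$; this is an induction over the cells of $Z$ not met by $U$, reduced by the cosheaf axiom to the single vertex stars, the one genuinely local input being that the quotient of the star category of an $n$-valent vertex by the subcategories of the $k$ whiskers lying in $U$ is the star category of an $(n-k)$-valent vertex --- this is where the drop in valence between the $X$-star and the $Z$-star of a boundary vertex is absorbed. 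Granting it, $R$ is the Verdier localization $\cF^{top}(X)\to\cF^{top}(X)/\cF^{top}(U)\simeq\cF^{top}(Z)$, and $R_\infty=\mathrm{Ind}(R)$ has a fully faithful right adjoint.

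\emph{The Mayer--Vietoris square.} Assume $Z_{12}=Z_1\cap Z_2$ is a disjoint union of circles. I would first establish the pushout formula
\[
\cF^{top}(Z)\;\simeq\;\cF^{top}(Z_1)\ \sqcup_{\cF^{top}(Z_{12})}\ \cF^{top}(Z_2).
\]
Although $\cF^{top}(-)$ is a cosheaf only for open covers, the circle hypothesis makes the closed cover excisive --- a regular neighbourhood of $Z_{12}$ in $Z$ retracts onto it --- so combining open cosheaf descent with the restriction functors of step (i) yields the pushout, the overlap contributions being controlled by the explicitly computable category $\cF^{top}(S^1)\simeq\Perf(\bG_m)$. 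Now Ind-complete: this becomes a pushout in the $\infty$-category of presentable dg categories, and by step (i) all four restriction functors are Verdier localizations, hence admit fully faithful right adjoints after Ind-completion. It then suffices to verify the Beck--Chevalley (closed base change) identity for the square, which I would do by reducing, via the cosheaf axiom, to the local model along each circle component of $Z_{12}$, where it is an elementary computation with $\Perf(\bG_m)$; the standard criterion that a pushout square of localizations satisfying Beck--Chevalley is also a pullback then gives the homotopy fiber product.

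\emph{Main obstacle.} The weight is carried by the closed-cover excision in step (ii) and the Beck--Chevalley verification in step (iii), and both are exactly where the hypothesis ``$Z_{12}$ is a union of circles'' is essential: for an intersection of a different topological type the regular neighbourhood would not retract compatibly, the cover would fail to be excisive, and the base-change identity would break. The sharpest single point, I expect, is the compatibility of the right adjoints to the restriction functors across the square --- that restricting from $Z$ to $Z_1$ and then co-including along $Z_{12}\hookrightarrow Z_1$ agrees with co-including along $Z_2\hookrightarrow Z$ and then restricting --- since this is precisely where the passage from a cosheaf, which natively produces pushouts, to a sheaf-like fiber-product statement has to be forced by a genuine geometric input rather than by formal nonsense.
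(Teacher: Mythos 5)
Your overall architecture is sensible and step (i) matches the paper's construction (the exceptional restriction to $Z$ is the Verdier localization arising from the cofiber sequence $\cF^{top}(U)\to\cF^{top}(X)\to\cF^{top}(Z)$ with $U=X-Z$, following Dyckerhoff), but there is a genuine gap at the heart of your step (ii), and it sits exactly where the paper has to work hardest. The ``excision'' claim --- that because a regular neighbourhood of $Z_{12}$ retracts topologically onto $Z_{12}$, the overlap contribution is controlled by $\cF^{top}(S^1)\simeq\Perf^{(2)}(\Gm)$ --- does not hold as stated. The combinatorial regular neighbourhood $N_{Z_{12}}X$ is a wheel $\Lambda(m_1,m_2)$, a circle with $m_1+m_2$ spokes; the topological deformation retraction onto the circle is \emph{not} a contraction of ribbon graphs (it would collapse noncompact edges), and $\cF^{top}_\infty(\Lambda(m_1,m_2))\simeq Rep^\infty(Q(m_1,m_2))$ (Lemma \ref{ququiver}) is genuinely larger than $\cQ Coh^{(2)}(\Gm)$. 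The passage from the wheel to the bare circle is itself an exceptional restriction that kills the spokes --- precisely the kind of functor whose descent behaviour is at issue --- so the closed cover is not excisive by soft topology, and the pushout you assert cannot be extracted from open cosheaf descent plus a retraction. The paper isolates the missing content as a standalone local statement (Lemma \ref{lem:local}): for a wheel covered by two closed sub-wheels meeting in the central circle, the square of exceptional restrictions is a fiber product. Its proof requires real input: either an explicit check that the projective generators of the quiver category lie in the essential image of the comparison functor, or the mirror identification of the square with Zariski descent for $\bP^1(n_1,n_2)=[\bA^1/\mu_{n_1}]\cup_{\Gm}[\bA^1/\mu_{n_2}]$. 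Your ``elementary Beck--Chevalley computation with $\Perf^{(2)}(\Gm)$'' is a placeholder for this lemma, not a proof of it.

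A second, smaller gap: even granting the local statement, assembling the global fiber product requires knowing that open restriction followed by exceptional restriction agrees with the direct exceptional restriction (Proposition \ref{prop:compr} in the paper). This is needed both to identify the outer face of the paper's $3\times 3$ grid (Lemma \ref{gluing-lemma}) with the square in the theorem and to make sense of ``the restriction to $Z_{12}$ factors through the wheel''; it is not formal, and the paper proves it via a support argument (Lemma \ref{lem:fff}). Your proposal uses this compatibility silently when it ``combines open cosheaf descent with the restriction functors of step (i).'' Note finally that the paper never proves, nor needs, the pushout of your step (ii): it establishes the pullback directly, by exhibiting the square as the outer face of a grid whose four inner squares are fiber products for three separate reasons (open descent, the wheel lemma, and a limits-commute-with-limits argument), so if you want to salvage your route you must supply both the pushout and the Beck--Chevalley identity independently, and each of them secretly contains Lemma \ref{lem:local}.
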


Restrictions to closed subgraphs for the topological Fukaya category have also been been considered by Dyckerhoff in \cite{D}. From the perspective of the wrapped Fukaya category, they are closely related to the stop removal functors appearing in recent work of Sylvan \cite{Sy}. The technique of gluing the topological Fukaya category across a decomposition of skeleta 
into closed pieces also plays a central role in \cite{Ku}. Theorem \ref{trrrr} is a key ingredient in the proof of Theorem \ref{tlsba}, which also depends on a 
careful study of the geometry of skeleta under pants attachments. Indeed our proof of Theorem \ref{tlsba} hinges on a recursion where, at each step, we are simultaneously gluing in a  pair of pants and  deforming the skeleton on the surface to make it compatible with this gluing. The  topological analysis required for the argument is carried out in Section \ref{surface-topology}.

\subsection{The structure of the paper}
In section \ref{preliminaries} we fix notations on dg categories and ribbon graphs.  In Section \ref{Hori-Vafa}  we explain the set-up of Hori-Vafa mirror symmetry and prove a key decomposition of the category of matrix factorizations, which is a simple consequence of its sheaf properties.  Section \ref{sec top Fuk} contains a summary of the theory of the topological Fukaya category based mostly on \cite{DK}, while in Section \ref{restrictions closed} we study restrictions functors to open and closed subgraphs, and their compatibilities. In 
Section \ref{sec:gluing} we prove that the topological Fukaya category can be glued from a special kind of covers by closed sub-graphs. Section \ref{surface-topology} is devoted to a careful examination of the interactions between ribbon graphs and pants decompositions. This play a key role in the proof of the main theorem, which is contained in Section \ref{The induction}.

\begin{acknowledgements} We thank 
 David Ben-Zvi, Dario Beraldo, Ga\"etan Borot,  Tobias Dyckerhoff, Mikhail Kapranov, Gabriel Kerr,  Charles Rezk, Sarah Scherotzke, Paul Seidel, and Eric Zaslow for useful discussions and for  their interest in this project. Yank{\i} Lekili helped us with Remark \ref{firstlastremark} and \ref{lastremark}. We also thank the anonymous referee for comments that significantly improved the exposition. This project started when both authors were visiting the Max Planck Institute for Mathematics in Bonn in the Summer of 2014, and they thank the institute for its hospitality and support. JP was partially supported by NSF Grant DMS-1522670. NS thanks the University of Oxford and Wadham College, where part of this work was carried out, for excellent working conditions. 
\end{acknowledgements}

\section{Notations and conventions}
\label{preliminaries}
\begin{itemize}
\item We fix throughout a ground field $\kappa$ of characteristic $0$. 
\item Throughout the paper we  will work with small and large categories, and the categories that they form.  We will ignore  set-theoretic issues, and limit ourselves to mention that they can be formally obviated by placing oneself in appropriate Grothendieck universes, see for instance 
Section 2 of \cite{To}. \end{itemize}


\subsection{Categories} 
\label{catcategories}
In this paper we will work in the setting of \emph{dg categories}. The theory of ordinary triangulated categories suffer from limitations which make it unsuitable to 
many applications in modern 
geometry. These limitations are essentially of two kinds. First, there are issues arising  when working \emph{within} a given triangulated category, and  that originate from 
  the lack of functorial  universal constructions, such as cones.  Additionally, triangulated categories do not themselves form a well-behaved category. Among other things, 
  this implies that we have no workable notion of limits and colimits of triangulated categories, 
which are essential to perform \emph{gluing}  of categories.

The theory of dg categories gives us means to overcome 
both kinds of  limitations. As opposed to ordinary triangulated categories, dg categories fit inside homotopically enriched categories  and this allows us to perform operations such as taking limits and colimits. There are two main ways to define dg categories and to understand the structure of  the category that they form: \begin{enumerate} 
\item Dg categories can be defined as ordinary  categories  \emph{strictly enriched} over chain complexes. Dg categories then form an ordinary category equipped with a  \emph{model structure}. In fact, there are several meaningful  options for the model structure: the one that is most relevant for us is the \emph{Morita model structure}  considered in \cite{T} and \cite{To}. 
\item Dg categories can also be defined as $\kappa$-linear $\infty$-categories: that is, as 
$\infty$-categories that carry an action of  the symmetric monoidal  
$\infty$-category of chain complexes. 
Following this definition, dg categories form naturally an $\infty$-category. This approach is taken up for instance in the important recent reference \cite{GR}.
\end{enumerate}

Approaches (1) and (2) each have several distinct  advantages. It is often useful  to have recourse to both viewpoints simultaneously, and this is what we shall do in the present paper. Categories strictly enriched over chain complexes are much more concrete objects, and their homotopy limits in the model   category of dg categories can be calculated explicitly. On the other hand, perspective (2) gives rise to more natural definitions and constructions  and puts at our disposal the comprehensive foundations on 
$\infty$-categories provided by the work of Lurie \cite{Lu} and  Gaitsgory--Rozenblyum \cite{GR}.

In fact, there is a well-understood dictionary  between  these two viewpoints. Inverting weak equivalences in the model structure of dg categories (1) yields an $\infty$-category which is equivalent to 
the $\infty$-category of dg categories (2). As a consequence homotopy limits and colimits match, via this correspondence, their $\infty$-categorical counterparts. A proof of this equivalence was given by Cohn \cite{Co} and then in a more general context by Haugseng \cite{H} (see Section \ref{Cohn} below).  Throughout the paper we  adopt the flexible $\infty$-categorical  formalism coming from view-point (2)  while performing all actual calculations (e.g. of limits) in the model category of dg categories made available by  view-point (1).  We shall give more precise pointers to the literature below. 

Before proceeding, however, we remark that we will be  mostly interested in 
 \emph{$\mathbb{Z}_2$-graded}, rather than $\mathbb{Z}$-graded, dg categories.   
Some of the results which we will recall in the following  
have not been  stated explicitly in the literature in the $\mathbb{Z}_2$-graded setting. However, they can all be transported 
to the $\mathbb{Z}_2$-graded setting without 
variations. We 
 refer to Section 1 of \cite{DK} and Section 2 of \cite{D} for a detailed summary of the Morita homotopy theory of $\bZ_2$-graded dg categories, which is closely patterned on 
the  $\bZ$-graded theory developed in \cite{To}.   

The standard reference for $\infty$-categories is \cite{Lu}. We will follow closely the treatment of dg categories given in \cite{DG} and  \cite[I.1.10.3]{GR}, to which we refer the reader for additional details. In particular, the notations and basic facts below are all taken from these two references. The only    difference is that we will systematically place ourselves in the $\mathbb{Z}_2$-graded setting.

 We denote $\mathrm{Vect}^{(2)}$ the presentable, stable and symmetric monoidal $\infty$-category of $\mathbb{Z}_2$-periodic chain complexes of $\kappa$-vector spaces. We denote  $\mathrm{Vect}^{(2), \mathrm{fd}}$ its full subcategory of compact objects. For the rest of this section we will call:
\begin{itemize}
\item $\infty$-categories tensored over 
$\mathrm{Vect}^{(2), \mathrm{fd}},$ \emph{$\kappa$-linear $\infty$-categories} 
\item cocomplete $\infty$-categories tensored over 
$\mathrm{Vect}^{(2)},$ \emph{cocomplete $\kappa$-linear $\infty$-categories}. 
\end{itemize}
These are dg categories in the sense of view point (2). We introduce  the following notations:  
\begin{itemize} 
\item We denote $\mathrm{DGCat}^{(2), \mathrm{non-cocmpl}}$ the $\infty$-category of (not necessarily small) $\kappa$-linear  $\infty$-categories    
\item We denote $\mathrm{DGCat}^{(2)}_{\mathrm{cont}}$ the $\infty$-category of cocomplete $\kappa$-linear graded dg categories, and continuous functors between them.  $\mathrm{DGCat}^{(2)}_{\mathrm{cont}}$ carries a symmetric monoidal structure, and we denote 
its internal Hom by $\mathrm{Fun}_{\mathrm{cont}}(-,-).$ 
\item We denote  $\mathrm{DGCat}^{(2)}_{\mathrm{small}}$ the $\infty$-category of small 
$\kappa$-linear $\infty$-categories. $\mathrm{DGCat}^{(2)}_{\mathrm{small}}$ also carries a natural symmetric monoidal structure and we denote 
its internal Hom by $\mathrm{Fun}(-,-)$.
\item If $\mathrm{C}$ is an object in $\mathrm{DGCat}^{(2)}_{\mathrm{cont}},$ we denote $\mathrm{C}^\omega$ its subcategory of compact objects,  
$$
\mathrm{C}^\omega \in \mathrm{DGCat}^{(2)}_{\mathrm{small}}. 
$$ 
\end{itemize}
We have  the \emph{forgetful} and the \emph{Ind-completion} functors 
$$
\mathrm{U}: \mathrm{DGCat}^{(2), \mathrm{non-cocmpl}}  \longrightarrow  \mathrm{DGCat}^{(2)}_{\mathrm{cont}}  \quad   \quad \mathrm{Ind}: \mathrm{DGCat}^{(2), \mathrm{non-cocmpl}}  \longrightarrow \mathrm{DGCat}^{(2)}_{\mathrm{cont}}. 
$$
The Ind-completion functor can be defined, on objects, via the formula (see \cite[I.1]{GR},  Lemma 10.5.6)  
$$ 
\mathrm{C} \in \mathrm{DGCat}^{(2), \mathrm{non-cocmpl}}  \mapsto \mathrm{Ind}(\mathrm{C})= \mathrm{Fun}_\kappa(\mathrm{C}^{\mathrm{op}}, \mathrm{Vect}^{(2)})  \in \mathrm{DGCat}^{(2)}_{\mathrm{cont}}
$$ 
where $\mathrm{Fun}_k(-,-)$ denotes the $\infty$-category of $\mathrm{Vect}^{(2), \mathrm{fd}}$-linear functors. 

\begin{remark}
In the main text we will apply the Ind-completion functor only to \emph{small} $\kappa$-linear $\infty$-categories: that is, to objects in the full subcategory 
$$
\mathrm{DGCat}^{(2)}_{\mathrm{small}} \subset 
\mathrm{DGCat}^{(2), \mathrm{non-cocmpl}}. 
$$
\end{remark}

\begin{lemma}
\label{indsmallcol}
The functor $\mathrm{Ind}$ preserves small colimits of small categories. 
\end{lemma}
 \begin{proof}
 In the general setting of  $\infty$-categories, this is proved   as   \cite[I.7]{GR}, Corollary 7.2.7. The proof carries over to the $\kappa$-linear setting without variations. 
 \end{proof}

\subsubsection{Rigid models}
\label{Cohn}
As we have already mentioned, the two viewpoints (1) and (2) on  foundations of dg categories  which we discussed in Section \ref{catcategories} can be fully mapped onto each other. This will allow us to leverage the computational power of model structures, while retaining at the same time the flexibility of the $\infty$-categorical formalism. A precise formulation of the equivalence between perspectives (1) and (2) was given in work of Cohn \cite{Co}. We recall 
this below with the usual proviso that we will state it in the 
$\mathbb{Z}_2$-graded setting. 

We will sometimes refer to   categories  strictly enriched over chain complexes as \emph{(ordinary) dg categories} in order to differentiate them 
from $\kappa$-linear $\infty$-categories.  
%
Let $dgCat_\kappa^{(2)}$ be the model category of small categories strictly enriched over chain complexes equipped with the Morita model structure \cite{To}. Let $\mathcal{W}$ be the set of weak equivalences for the Morita model structure: these are dg functors which induce quasi-equivalences at the level of ($\mathbb{Z}_2$-graded) module categories. 
\begin{proposition}[\cite{Co}, Corollary 5.5]
\label{comparisondgenrich}
There is an equivalence of $\infty$-categories
$$
N(dgCat_\kappa^{(2)})[\mathcal{W}^{-1}] \simeq \mathrm{DGCat}^{(2)}_{\mathrm{small}}
$$
where $N(-)$ is the \emph{nerve}. 
\end{proposition}
 
We will also need an analogue of Cohn's result in the setting of \emph{large} categories, that is for $\mathrm{DGCat}^{(2)}_\mathrm{cont}.$ Let us denote 
$ 
dgCat^{(2)}_{\kappa, \mathrm{cont}}
$ 
the strictly enriched analogue  of $\mathrm{DGCat}^{(2)}_\mathrm{cont}.$ This is the Morita model category of cocomplete categories strictly enriched 
over $\kappa-mod^{(2)}.$ 
Then the analogue of Proposition \ref{comparisondgenrich} is discussed as  Example 5.11 of \cite{H}. In many situations having recourse to the model category of dg categories has great computational advantages. In particular,  we  will be able to compute $\infty$-categorical (co)limits in 
$\mathrm{DGCat}^{(2)}_\mathrm{cont}$ 
and $\mathrm{DGCat}^{(2)}_\mathrm{small}$ by calculating the respective homotopy (co)limits of the strict models in  
$dgCat^{(2)}_{\kappa, \mathrm{cont}}.$
 
Working with rigid models is often also helpful when considering commutative diagrams, as it allows us to sidestep issues of homotopy coherence in $\infty$-categories. Recall that a diagram of ordinary dg categories 
 $$
 \xymatrix{
 \mathrm{A} \ar[r]^-F \ar[d]_-G & \mathrm{B} \ar[d]^-H \\ 
 \mathrm{C} \ar[r]^-K   & \mathrm{D} }
 $$
 \emph{commutes}  if there is a natural transformation $\alpha: H \circ F \Rightarrow K \circ G,$ which becomes 
 a natural equivalence when passing to homotopy categories. All commutative diagrams of dg categories in this paper will be understood as coming from commutative diagrams of ordinary dg categories as above. 

%
%
%
%
%
 \subsubsection{Limits of dg categories}
Homotopy 
limits of dg categories can be calculated explicitly.  
Colimits of dg categories are, on the other hand, quite subtle. It  is therefore an important  observation that,   under suitable  assumptions,  colimits can actually be turned into limits. This key fact can be formulated in various settings. For instance, in the context of colimits of presentable $\infty$-categories such a result follows from Proposition 5.5.3.13 and 5.5.3.18 of \cite{Lu}.  We will  recall a formulation of this fact   in the setting of dg-categories   due to Drinfeld and Gaitsgory. We follow  closely Section 1.7.2 of \cite{DG}, with the usual difference that we adapt every statement to the $\mathbb{Z}_2$-graded setting. 

Let $I$ be a small $\infty$-category, and let $\Psi: I \to \mathrm{DGCat}_{\mathrm{cont}}^{(2)}$ be a functor. For every pair of objects $i, j \in I$ and 
 morphism $i \to j,$ we set 
$$
C_i:=\Psi(i) \quad C_j:=\Psi(j) \quad \psi_{i, j} := \Psi(i \to j): C_i \to C_j. 
$$
Assume that each functor $\psi_{i,j}$ admits a continuous right adjoint $\phi_{j,i}.$ Then there is a  functor $\Phi: I^{op} \to \mathrm{DGCat}_{\mathrm{cont}}^{(2)}$ such that
$$
\Phi(i)=C_i, \quad \text{and} \quad \Phi(i \to j)=\phi_{j,i}. 
$$
\begin{proposition}[Proposition 1.7.5 \cite{DG}]
\label{limit=colimit}
The colimit
$$
\varinjlim_{i \in I} C_i := \varinjlim_{I} \Psi \in \mathrm{DGCat}_{\mathrm{cont}}^{(2)}
$$
is canonically equivalent to the limit 
$$
\varprojlim_{i \in I^{op}} C_i := \varprojlim_{I} \Phi \in \mathrm{DGCat}_{\mathrm{cont}}^{(2)}. 
$$
\end{proposition}

%
%
%
%
%
%
\subsubsection{Sheaves of dg categories}
We will make frequent recourse to the concept of \emph{sheaf of dg categories} either on graphs, or on schemes equipped with the Zariski topology.  
For our purposes it will be enough to use  a rather weak notion of sheaf, which does not take into consideration hypercovers.  Let $\cF$ be a presheaf of dg categories over a topological space $X.$ We say 
that $\cF$  satisfies  \emph{\v{C}ech descent} if,  
for all open subset $U \subset X$  and cover $\cU=\{U_i\}_{i \in I}$ of $U,$ the restriction functor 
\begin{equation}
\label{cek}
\xymatrix{
\cF(U)  \ar[r]
& 
\Big( \cF (\cU) 
\ar@<-.5ex>[r]_-*!/d0.5pt/{\labelstyle  
}
\ar@<.5ex>[r]^ -*!/u0.7pt/{\labelstyle 
} 
& \cF  (\cU \times_U \cU) 
\ar@<-.5ex>[r]_-*!/d0.5pt/{\labelstyle  
}
\ar@<.0ex>[r]^ -*!/u0.7pt/{\labelstyle 
} 
\ar@<.5ex>[r]^ -*!/u0.7pt/{\labelstyle 
} & 
 \ldots \Big) }
\end{equation}
realizes $\cF(U)$ as the limit of the semi-cosimplicial object obtained by evaluating $\cF$ on the \v{C}ech nerve of $\cU$. If 
$\cF$ satisfies  \v{C}ech descent, we say that $\cF$ is a \emph{sheaf} of dg categories. The dual notion of \emph{cosheaf} of dg categories is defined by reversing all the arrows in  (\ref{cek}). 

There are two conditions under which (\ref{cek}) can be simplified, and that will occur in the examples we will be considering in the rest of the paper:
\begin{itemize}
\item[(a)] when triple and higher intersections of the cover $\cU$ are empty,
\item[(b)] when $\cF$ evaluated on the triple and higher intersection of the cover $\cU$ is equivalent to the zero category.   
\end{itemize}
We remark that condition (a) is in fact of special case of condition (b), but it is useful to keep them distinct. In this paper we will consider (co)sheaves of  categories on graphs, which have covering dimension one and where therefore condition (a) is automatically satisfied. 
 Also, we will consider sheaves of matrix factorizations on three-dimensional toric varieties equipped with the Zariski topology: there, although triple overlaps will not be empty,  condition (b) will apply.

 Let $\cU=\{U_1, \ldots, U_n\}$ be an open cover of $U$ and assume that either condition (a) or (b) are satisfied. Then the sheaf axiom  (\ref{cek})  is equivalent to requiring that  $\cF(U)$ is the limit of the following  much smaller diagram, which coincides with the truncation of the \v{Cech} nerve encoding the sheaf axiom in the classical setting of sheaves of sets 
\begin{equation}
\label{fincek}
\xymatrix{
\cF(U)  \ar[r]
& 
\Big( \displaystyle \prod_{i = 1, \ldots, n} \cF (U_i) 
\ar@<-.5ex>[r]_-*!/d0.5pt/{\labelstyle  
}
\ar@<.5ex>[r]^ -*!/u0.7pt/{\labelstyle 
} 
& \displaystyle \prod_{i < j} \cF  ( U_i \cap U_j)   \Big) 
 }
\end{equation}
The same reasoning applies to cosheaves of categories, with the only difference that the arrows in (\ref{fincek}) have to be reversed.

We will always work in settings where either (a) (in the case of the topological Fukaya category) or (b) (in the case of   matrix factorizations) are satisfied,  and therefore we always implicitly reduce to (\ref{fincek}).    

\subsubsection{Schemes and stacks}
If $X$ is a scheme or stack we denote 
$$
\Perf^{(2)}(X) \in \mathrm{DGCat}^{(2)}_{\mathrm{small}},  \quad 
\cQ Coh^{(2)}(X)  \in \mathrm{DGCat}^{(2)}_{\mathrm{cont}}
$$
the \emph{$\bZ_2$-periodization} of the dg categories of perfect complexes and of quasi-coherent sheaves on $X$. We refer the reader to   
Section 1.2 of \cite{DK} for the definition of  
 $\bZ_2$-periodization. All schemes appearing in this paper will be quasi-compact and with affine diagonal. All DM stacks will be global quotients of such schemes by affine groups.  Using the terminology introduced in 
 \cite{BFN}, these are all examples \emph{perfect stacks}. This implies in particular that, if $X$ is such a scheme or stack, the category of quasi-coherent sheaves over $X$ is equivalent to  the Ind-completion of its category of perfect complexes
 $$
 \cQ Coh^{(2)}(X) \simeq 
 \mathrm{Ind}(\Perf^{(2)}(X)). 
 $$
\subsection{Ribbon graphs}  
For a  survey of the theory ribbon graphs see \cite{MP}, and Section 3.3 of \cite{DK}. We will just  review some standard terminology.  A  graph $X$ is a pair $(V, H)$ of finite sets equipped with the following extra data:
\begin{itemize}
\item An involution $\sigma: H \rightarrow H$
\item A map $I: H \rightarrow V$
\end{itemize}
We call $V$ the set of  vertices, and $H$ the set of  half-edges.  
Let $v$ be a vertex. We say that the half-edges in $I^{-1}(v)$ are   incident  to $v$. The cardinality of $I^{-1}(v)$ is called the valency  of the vertex $v$. The edges  of $X$ are the equivalence classes of half-edges under the action  of 
$\sigma$. We denote $E$ the set of edges of $\sigma$. The set of external edges of $X$ is the subset $E^ o \subset E$ of equivalence classes of cardinality one, which correspond to the fixed points of $\sigma$. The internal edges of $X$ are the elements of $E - E^ o$. Subdividing an edge $e$ of $X$ means adding to $X$ a two-valent vertex lying on $e$. More formally, let $e$ be equal to $\{h_1, h_2\} \subset H$. We add a new vertex $v_e$ to $V$, and two new half-edges $h_1'$ and $h_2'$ to $H$. We modify the maps 
$\sigma$ and $I$ by setting 
$$
\sigma(h_1) = h_1', \quad \sigma(h_2) = h_2', \quad 
I(h_1') = I(h_2') = v_e.
$$

It is often useful to view a graph as a topological space. This is done by modeling the external and the internal edges of $G$, respectively,  
as semiclosed and closed 
intervals, and gluing them according to the incidence relations. We refer to this topological model as the underlying topological space of $X$. When talking about the embedding of a graph $X$ into a topological space, we always mean the embedding of its underlying topological space.  

\begin{definition}
A \emph{ribbon graph} is a graph $X = (V, H)$ together with the datum of a cyclic ordering of the set $I^{-1}(v)$, for all vertices $v$ of $X$. 
\end{definition} 

If a graph $X$ is embedded in an oriented surface it acquires 
 a canonical ribbon graph structure by ordering the edges at each vertex counter-clockwise with respect to the orientation. Conversely, it is possible to attach to any ribbon graph $X$ a non-compact oriented surface inside which $X$ is embedded as a strong deformation retract. See \cite{MP}  for additional details on these constructions. If  $\Sigma$ is a Riemann surface, 
 a \emph{skeleton} or \emph{spine} of $\Sigma$ is a ribbon graph 
 $X$ together with an embedding $X \rightarrow \Sigma$  as a strong deformation retract.

\section{Hori-Vafa mirror symmetry}
\label{Hori-Vafa}
In this section we review the setting of 
mirror symmetry for toric Calabi-Yau LG models in dimension three. Mirror symmetry for LG models was first proposed by Hori and Vafa \cite{HV}, and is the subject 
of a vast literature in string theory and mathematics, see \cite{GKR} and references therein. In this paper we compare the category of B-branes on toric Calabi-Yau LG models and  the category of A-branes on the  mirror. 

\subsection{B-branes} 
\subsubsection{Toric Calabi-Yau threefolds}
\label{sec tcy}
Let $\widetilde{N}$ be a $n-1$-dimensional 
lattice, and let $P$ a lattice polytope 
in 
$\widetilde{N}_\bR = \widetilde{N} \otimes \bR$. 
Set 
$N := \widetilde{N} \oplus \bZ$, and 
$N_\bR := N \otimes \bR$. Denote 
$C(P) \subset N_{\bR}$ the cone over the polytope $P$ placed at height one in $N_\bR$. More formally, consider 
$$
\{1\} \times P \subset N_{\bR} \cong \bR \oplus \widetilde{N}_\bR, 
$$
and let $C(P)$ be the cone generated by $\{1\} \times P$ inside 
$N_\bR$. Let $F(P)$ be the fan consisting of $C(P)$ and all its faces. The affine toric variety 
$X_P$ corresponding to  $F(P)$ has an isolated Gorenstein singularity.  The toric resolutions of $X_P$ are in bijection with smooth subdivisions of the cone $C(P)$. We will be interested in toric \emph{crepant} resolutions, that is,  resolutions with trivial canonical bundle. 

Toric crepant resolutions of $X_P$ 
are given by unimodular triangulations of $P$, i.e. triangulations of $P$ by elementary lattice simplices. Any such triangulation $\cT$ gives rise to a smooth subdivision of the cone $C(P)$. We denote $C(\cT)$ the set of cones on the simplices $T \in \cT$ placed at height one in $N_\bR$. Let  $F(\cT)$ be the corresponding fan, and let 
$X_{ \cT }$ be the toric variety with fan $F(\cT)$. The variety $X_{ \cT }$ is smooth and Calabi-Yau. All toric crepant resolutions of $X_{P}$ are isomorphic to $X_{ \cT }$ for some unimodular triangulation $\cT$ of $P$. 

The following definition  will be useful  later  on, see for instance 
\cite{BJMS} for additional details on this construction. 
 \begin{definition}
Assume that $n=3.$ We denote $G_\cT$ the tropical curve dual to the triangulation $\cT$ of $P$. 
\end{definition}

Let $\widetilde{M} = Hom(\widetilde{N}, \bZ)$ and $M = Hom(N, \bZ)$.  
The height function on $N$ is by definition the projection 
$$
N = \bZ \times \widetilde{N} \rightarrow \bZ. 
$$
The height function corresponds to the lattice point 
$
(1, 0) \in M \cong \bZ \times \widetilde{M},  
$
which determines a monomial function 
$$
W_\cT: X_{\cT} \rightarrow \bA_\kappa^1. 
$$ 
The category of B-branes on the Landau-Ginzburg model $(X_{ \cT }, W_\cT)$ is the category of matrix factorizations for $W_\cT$, $MF(W_\cT)$. We review the theory of matrix factorizations next.

\subsubsection{Matrix factorizations} 
Let $X$ be a scheme or a smooth DM-stack and let 
$f: X \rightarrow \bA^1_\kappa$ be a regular function. The category of matrix factorizations for the pair 
$(X,f)$ was defined in \cite{LP} and \cite{O2},  extending the theory of matrix factorizations for algebras that goes back to classical work of Eisenbud \cite{E}. These references make various assumptions on $f$ and $X$, which are always satisfied in the cases we are interested in. 
In the following  
$X$ will always be smooth of finite type, and 
$f$ will be flat. 
 We will work with a dg enhancement of the category of matrix factorizations, which has been studied for instance in \cite{LS} and \cite{P}. We refer to these papers for additional details. We denote $MF(X,f)$ the $\bZ_2$-periodic 
dg-category of matrix factorizations of the pair 
$(X,f)$. It will often be useful to work with Ind-completed categories of matrix factorization. 

\begin{definition}
We denote $MF^\infty(X, f)$ the Ind-completion of $MF(X,f)$,
$$
MF^\infty(X, f) = \mathrm{Ind}(MF(X,f)) \in 
\mathrm{DGCat}^{(2)}_{\mathrm{cont}}. 
$$
\end{definition}
The category $MF^{\infty}(X, f)$ has the following important descent property. 

\begin{proposition}[\cite{P} Proposition A.3.1]
\label{prop:descent}
Let $f: X \rightarrow \bA^1$ be a morphism. Then the assignment
$$
U \mapsto MF^\infty(U, f|_U)
$$
determines a sheaf for the \' etale topology. 
\end{proposition}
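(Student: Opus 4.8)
The plan is to deduce the sheaf property from \'etale descent for quasi-coherent sheaves, exploiting that $MF^\infty(-,f)$ is a module over $\cQ Coh^{(2)}(-)$. Since $MF^\infty(-,f)$ takes values in the presentable $\infty$-category $dg\cP r^{(2),L}$, it will be enough to fix an \'etale cover $\{U_i\to U\}$ with \v{C}ech nerve $U_\bullet\to U$ and show that the natural functor
\[
MF^\infty(U,f|_U)\;\longrightarrow\;\lim_{\Delta}MF^\infty(U_\bullet,f|_{U_\bullet})
\]
is an equivalence, the limit being computed in $dg\cP r^{(2),L}$ — equivalently, via $(dg\cP r^{(2),L})^{\op}\simeq dg\cP r^{(2),R}$, as a colimit of the diagram of right adjoint restriction functors. (Zariski descent is the special case in which the $U_i$ are open subschemes.) All terms $U_{i_0\cdots i_n}$ of $U_\bullet$ remain quasi-compact with affine diagonal, so they satisfy our standing hypotheses.

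First I would record the module structure: $\Perf^{(2)}(X)$ acts on $MF(X,f)$ by tensoring a matrix factorization with a perfect complex, and after Ind-completion this yields a natural $\cQ Coh^{(2)}(X)\simeq Ind(\Perf^{(2)}(X))$-module structure on $MF^\infty(X,f)$. The heart of the argument is then a base change formula: for every morphism $g\colon Y\to X$ there should be a canonical equivalence
\[
MF^\infty(Y,f\circ g)\;\simeq\;MF^\infty(X,f)\otimes_{\cQ Coh^{(2)}(X)}\cQ Coh^{(2)}(Y).
\]
Granting this, the proposition follows formally. The presheaf $\cQ Coh^{(2)}(-)$ satisfies \'etale descent — this is the $\bZ_2$-folding of the descent statement for quasi-coherent sheaves recalled in \cite{BFN} — and an \'etale cover of a quasi-compact scheme with affine diagonal is descendable, so $\lim_\Delta\cQ Coh^{(2)}(U_\bullet)$ recovers $\cQ Coh^{(2)}(U)$ and this limit is preserved by the colimit-preserving functor $MF^\infty(U,f)\otimes_{\cQ Coh^{(2)}(U)}(-)$. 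Combining this with the base change formula gives
\[
MF^\infty(U,f)\;\simeq\;MF^\infty(U,f)\otimes_{\cQ Coh^{(2)}(U)}\lim_{\Delta}\cQ Coh^{(2)}(U_\bullet)\;\simeq\;\lim_{\Delta}MF^\infty(U_\bullet,f|_{U_\bullet}),
\]
as desired.

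The main obstacle is the base change formula, and this is where I would spend most of the effort. Here one works with an explicit model of $MF^\infty$: via the dg enhancement of \cite{LS,P} of the equivalences of \cite{O2,LP}, realize $MF^\infty(X,f)$ in terms of $\bZ_2$-graded quasi-coherent data on $X$ equipped with an odd differential squaring to $f$, passed to the coderived (``second kind'') category — equivalently, as the full subcategory of the ind-coherent category of the derived zero fibre $X_0=X\times^{h}_{\bA^1}\{0\}$ cut out by a support condition along the critical locus of $f$. Pullback along $g$ sends such curved sheaves to curved sheaves with differential tensored by $g^{*}$; derived zero fibres base change, $Y_0\simeq X_0\times_X Y$; and the support condition is stable under pullback. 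Since the relative tensor product $(-)\otimes_{\cQ Coh^{(2)}(X)}\cQ Coh^{(2)}(Y)$ is precisely the base change of $\cQ Coh^{(2)}(X)$-module categories, the two constructions agree.

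The delicate points, which I would isolate and treat with care, are the compatibility with Ind-completion — that $MF^\infty$ of each $U_{i_0\cdots i_n}$ is compactly generated by the small category $MF$, and that the restriction functors carry the relevant compact generators to compact generators, so that the limit taken in $dg\cP r^{(2),L}$ is the one computed above — and the matching of the curved and $\bZ_2$-periodic structures after pullback. Both simplify considerably for \'etale (hence flat) maps, since flat pullback is exact and commutes termwise with the twisted differentials, and one may reduce to the affine case, where everything is explicit in terms of curved modules over a commutative $\kappa$-algebra.
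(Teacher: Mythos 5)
The paper does not actually prove this statement: it is quoted verbatim from \cite{P}, Proposition A.3.1, so the only fair comparison is with Preygel's argument. Your route --- equip $MF^\infty(X,f)$ with its $\cQ Coh^{(2)}(X)$-module structure, establish a base-change formula $MF^\infty(Y,f\circ g)\simeq MF^\infty(X,f)\otimes_{\cQ Coh^{(2)}(X)}\cQ Coh^{(2)}(Y)$ via the identification with ind-coherent sheaves on the derived zero fibre with supports in the critical locus, and then deduce descent from descent for $\cQ Coh^{(2)}$ together with descendability of \'etale covers of quasi-compact schemes with affine diagonal --- is in the same spirit as Preygel's (he likewise reduces to descent for ind-coherent sheaves with support conditions through the $\mathrm{PreMF}$/$\mathrm{IndCoh}$ dictionary), and it is a valid proof strategy. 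Two points deserve correction. First, the base-change formula as you state it, ``for every morphism $g$,'' is false: for a general $g$ the critical locus of $f\circ g$ strictly contains $g^{-1}(\mathrm{crit}(f))$ (e.g.\ $g$ constant), so the support conditions on the two sides disagree; the formula holds for flat --- in particular \'etale or smooth --- $g$, which is all the \v{C}ech nerve requires, and you should state it only in that generality. Second, your phrase ``this limit is preserved by the colimit-preserving functor $MF^\infty(U,f)\otimes_{\cQ Coh^{(2)}(U)}(-)$'' conflates two things: colimit-preserving functors do not preserve totalizations, and the actual reason the tensor functor commutes with this particular cosimplicial limit is descendability of the cover (the relevant cosimplicial object becomes split after a base change that generates the unit as a thick tensor ideal). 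You invoke descendability, so the logic is salvageable, but the justification should rest on it explicitly rather than on colimit-preservation.
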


Using Proposition \ref{prop:descent} we can give  
give a very concrete description of $MF^{\infty}(X_\cT, W_\cT)$, where $X_\cT$ and $W_\cT$ are as in section \ref{sec tcy}. In order to do so, we need to explain how to attach a matrix factorizations-type category to a certain class  of planar graphs. This will require setting up some notations and preliminaries. 
 
Let $I$ be a set of cardinality three, say $I = \{a, b, c \}$.  
Denote 
$$
X_I  = Spec(\kappa[t_i, i \in I]) = Spec(\kappa[t_a, t_b, t_c]),
$$ and let $f$ be the regular function  
$$
f  = \times_{i \in I} t_i = t_a t_b t_c : X \longrightarrow  \bA^1_\kappa.   
$$
For all 
$j \in I$, let $I_j$ be the subset $I -\{ j \} \subset I$. 
Let $U_j$ be the open  subscheme $X - \{t_j = 0\}$, and let $\iota_j$ be the inclusion $U_j \subset X$. Denote $$
\iota_j ^*: MF^\infty(X_I, f) \longrightarrow  
MF^\infty(U_j, f|_{U_j})
$$
the restriction functor.  Let $f_j$  be the regular function 
$$
f_j  = \times_{i \in I_j} t_i: U_j \longrightarrow \bA^1_\kappa. 
$$
Note that $f|_{U_j}$ is given by $ t_j f_j$.  

\begin{proposition}
\label{prop:resres}
There are  equivalences of dg categories  
$$
MF^\infty (U_j, f|_{U_j}) \simeq MF^\infty(U_j, f_j)  \simeq \cQ Coh^{(2)}(\bG_m). 
$$
\end{proposition}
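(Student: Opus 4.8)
\textbf{Proof plan for Proposition \ref{prop:resres}.}

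The plan is to establish the two equivalences separately. For the first equivalence $MF^\infty(U_j, f|_{U_j}) \simeq MF^\infty(U_j, f_j)$, the point is that $f|_{U_j} = t_j f_j$ where $t_j$ is an \emph{invertible} function on $U_j$. Multiplication of the superpotential by a unit does not change the category of matrix factorizations up to equivalence: given a matrix factorization $(E, d)$ of $f_j$, i.e. an odd endomorphism $d$ with $d^2 = f_j \cdot \mathrm{id}$, one produces a matrix factorization of $t_j f_j$ by rescaling one of the two components of $d$ by $t_j$ (equivalently, twisting the $\bZ_2$-graded module by the invertible function $t_j$). This is a standard manipulation; I would phrase it as an equivalence of dg categories $MF(U_j, f_j) \simeq MF(U_j, t_j f_j)$ and then apply the $Ind$ functor, which is a functor on $dgCat^{(2)}$, to pass to the presentable versions.

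For the second equivalence $MF^\infty(U_j, f_j) \simeq \cQ Coh^{(2)}(\bG_m)$, I would use the explicit coordinates. Say $j = a$, so that $U_a = \Spec(\kappa[t_a^{\pm 1}, t_b, t_c]) \cong \bG_m \times \bA^2_\kappa$ and $f_a = t_b t_c$, the product of the two coordinates on the $\bA^2_\kappa$ factor. By a Knörrer-periodicity-type statement (or directly: $t_b t_c$ on $\bA^2$ is the simplest nondegenerate quadratic-type singularity after a coordinate change $t_b t_c = u^2 - v^2$), the category $MF(\bA^2_\kappa, t_b t_c)$ is equivalent to $MF(\Spec \kappa, 0) \simeq \Perf^{(2)}(\Spec \kappa)$; more precisely the zero locus of $t_b t_c$ is a normal crossing divisor whose matrix factorization category is generated by the structure sheaf of a component, with endomorphism algebra $\kappa$ in $\bZ_2$-graded sense. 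Tensoring with the $\bG_m$ factor (which carries the trivial superpotential) gives $MF^\infty(\bG_m \times \bA^2_\kappa, t_b t_c) \simeq \cQ Coh^{(2)}(\bG_m) \otimes \Perf^{(2)}(\Spec\kappa)^\vee \simeq \cQ Coh^{(2)}(\bG_m)$. I would invoke the behavior of $MF^\infty$ under products with a trivial LG factor, and the fact that $\cQ Coh^{(2)}$ is symmetric monoidal under the conventions recalled in the Notations section, to make this rigorous. The cases $j = b$ and $j = c$ are identical by symmetry.

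I expect the main obstacle to be stating Knörrer periodicity in exactly the $\bZ_2$-graded, dg-enhanced, Ind-completed generality used here, and keeping track of which category it lands in ($dgCat^{(2)}$ versus $dg\cP r^{(2), L}$). One must be careful that the equivalence $MF(\bA^2_\kappa, t_b t_c) \simeq \Perf^{(2)}(\Spec \kappa)$ holds at the level of dg categories (not merely triangulated categories), and that $Ind$ and the external tensor product commute as needed; both are available from the references on dg enhancements of matrix factorizations (\cite{LP}, \cite{O2}, \cite{LS}, \cite{P}) and from \cite{BFN}, so this is a matter of assembling citations rather than new work. A clean alternative, avoiding any direct Knörrer computation, is to apply Proposition \ref{prop:descent} once more: cover $U_a$ by the two opens $\{t_b \neq 0\}$ and $\{t_c \neq 0\}$, on each of which $f_a$ becomes a coordinate times a unit, hence has matrix factorization category equivalent to that of a smooth affine variety with linear superpotential, which vanishes; descent along this cover then identifies $MF^\infty(U_a, f_a)$ with quasi-coherent sheaves on the intersection locus, which is $\bG_m$. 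I would present the Knörrer argument as the main line and mention the descent argument as a remark.
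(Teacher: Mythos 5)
Your proposal matches the paper's proof: the first equivalence is obtained exactly as you describe, by rescaling one of the two components of the differential by the unit $t_j$ and then Ind-completing, and the second is Kn\"orrer periodicity in the $\bZ_2$-graded dg setting, for which the paper cites Theorem 9.1.7(ii) of \cite{P} to conclude directly that $MF(\bG_m\times\bA^2_\kappa, t_bt_c)\simeq \Perf^{(2)}(\bG_m)$. Your extra unwinding via the external tensor product with $\Perf^{(2)}(\Spec\kappa)$, and the alternative descent argument, are not needed but are consistent with what the paper does.
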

\begin{proof}
Recall that objects of 
$MF(U_j, f_j)$ are pairs of free finite rank vector bundles on $U_j$, and maps between them  
$$
\xymatrix{
\Big ( V_1 \ar@/^10pt/@{->}[r]^{d_1}  & V_2 
\ar@/^10pt/@{->}[l]_{d_2} \Big ),  
} 
$$ 
having the property that 
$$
d_1 \circ d_2 = f_j \cdot Id_{V_2}, \, \, \text{ and } 
d_2 \circ d_1 = f_j \cdot Id_{V_1} . 
$$
Thus the assignment 
$$
\xymatrix{
\Big ( V_1 \ar@/^10pt/@{->}[r]^{d_1}  & V_2 
\ar@/^10pt/@{->}[l]_{d_2} \Big )  
} \in MF(U_j, f_j) \mapsto \xymatrix{
\Big ( V_1 \ar@/^10pt/@{->}[r]^{t_j \cdot d_1}  & V_2 
\ar@/^10pt/@{->}[l]_{d_2} \Big )  
}   \in MF(U_j, f|_{U_j})
$$
determines an equivalence 
$$
MF (U_j, f|_{U_j}) \simeq MF (U_j, f_j).   
$$
The first equivalence is obtained by Ind-completion. 

The second equivalence follows from 
Kn\"orrer periodicity. For a very general formulation of Kn\"orrer periodicity see Theorem 9.1.7 (ii) of  \cite{P}.  Let us assume for convenience that 
$I = \{1, 2, 3\}$, and that $j = 1$. Then 
$U_j = \bG_m \times \bA^2_\kappa$, and 
$f_j = t_2 \cdot t_3$, where $t_2$ and $t_3$ are coordinate on the factor $\bA^2_\kappa$. 
By Kn\"orrer periodicity $MF(U_j, f_j)$ is equivalent to the $\bZ_2$-periodic category of perfect complexes on the first factor, $\bG_m$. This concludes the proof.  \end{proof}

\begin{remark}
\label{remres}
The equivalences constructed in Proposition \ref{prop:resres} are given by explicit functors, and do not rely on further choices. For the second equivalence,  this follows from the proof of Kn\"orrer periodicity in  \cite{P}. Abusing notation we sometimes denote  $\iota_j^*$ also the composition of the pull-back with the equivalences from Proposition \ref{prop:resres}. Thus we may write 
$$
\iota_j^*: MF^\infty(X_I, f) \rightarrow 
MF^\infty(U_j, f_j), \quad \iota_j^*: MF^\infty(X_I, f) \rightarrow 
\cQ Coh^{(2)}(\bG_m). 
$$
\end{remark}

We can abstract from Remark \ref{remres} a 
formalism of restriction functors which will be useful in the next section. If   
$L$ is a set of cardinality two, denote $$
X_L  = Spec(\kappa[t_l, l \in L][u, u^{-1}]) \cong \bG_m \times \bA^2_\kappa, 
$$ 
and let $f$ be the morphism 
$$
f  = \times_{l \in L} t_l : X_L \longrightarrow  \bA^1_\kappa.   
$$ 
\begin{definition}
\label{defresres}
Let $I$ and $L$ be sets of cardinality three and two respectively, and assume that we are given an embedding 
$L \subset I$. We denote 
$R^{MF}_\infty$  the composite: 
$$
\xymatrix{
R^{MF}_\infty: MF^\infty(X_I, f) \ar@/_15pt/@{->}[rr] \ar[r]^-{\iota^*_j} & MF^\infty(U_j, f_j) \ar[r]^-{\simeq}  & 
MF^\infty(X_L, f),    
} 
$$
where 
\begin{enumerate}
\item $\{j\} = I -L$, and $\iota_j^*$ is defined as in Remark \ref{remres}. 
\item The equivalence $MF^\infty(U_j, f_j) \simeq MF^\infty(X_L, f)$ is determined by the isomorphism of $\kappa$-algebras 
$$
\kappa[t_i, i \in I][t_j^{-1}] = \kappa[t_l, l \in L][t_j, t_j^{-1}] \stackrel{\cong}{\longrightarrow} \kappa[t_l, l \in L][u, u^{-1}] 
$$
that sends $t_l$ to $t_l$, and $t_j$ to $t_u$. 
\end{enumerate}
\end{definition}
\subsubsection{Planar graphs and matrix factorizations}
\label{plagrama}
Let $G$ be a trivalent, planar graph. Assume for simplicity that 
$G$ does not contain any loop. We will explain how to attach to $G$ a matrix factorization-type category. We denote $V_G$ the set of vertices of 
$G$, and  $E_G$  the set of edges. 
\begin{itemize}
\item Let $v \in V_G$, and take a sufficiently  small ball $B_v$ in $\bR^2$ centered at $v$. 
Then the set of connected components of $B_v - G$ has cardinality three, and we denote it $I_v$. 
\item Let $e \in V_G$, and take a sufficiently small ball $B_e$ centered at any point in the relative interior of $e$. The set of connected components of $B_v - G$ has cardinality two, and we denote it $L_e$. 
\end{itemize}

\begin{remark}
\label{remreminc}
Note that the sets $I_v$ and $L_e$ do not depend (up to canonical identifications) 
on $B_e$ and $B_v$. Further, if a vertex $v$ is incident to an edge $e$, there is a canonical embedding: 
$
L_e  \subset I_{v}. 
$
\end{remark}
We attach to each vertex and edge of 
$G$ a category of matrix factorizations in the following way:
\begin{itemize}
\item We assign to $v \in V_G$ the category 
$$
\cB(v) := MF^\infty(X_{I_v}, f)
$$
\item We assign to $e \in E_G$ the category 
$$
\cB(e) := MF^\infty(X_{L_e}, f)
$$
\end{itemize}

By Remark \ref{remreminc}, and Definition \ref{defresres}, if a vertex $v$ is incident to an edge $e$ we have a restriction functor
\begin{equation}
\label{eqeq123}
R^{ \cB } : \cB(v) \longrightarrow 
\cB(e).
\end{equation}
If two vertices $v_1$ and $v_2$ are incident to an edge $e$, we obtain a 
diagram of restriction functors
$$
\xymatrix{
\cB(v_1) \times  \cB(v_2)
\ar@<-.5ex>[r]_-*!/d0.5pt/{\labelstyle }
\ar@<.5ex>[r]^ -*!/u0.7pt/{\labelstyle } 
& \cB(e). 
}
$$
Running over the vertices and edges of $G$, we obtain a \v{C}ech-type diagram in 
$\mathrm{DGCat}^{(2)}_{\mathrm{cont}}$
\begin{equation}
\label{eqeq}
\xymatrix{
 \prod_{v \in V_G} 
\cB(v) 
\ar@<-.5ex>[r]_-*!/d0.5pt/{\labelstyle }
\ar@<.5ex>[r]^ -*!/u0.7pt/{\labelstyle } 
& \prod_{e \in E_G} \cB(e). 
}
\end{equation}

\begin{definition}
\label{defff}
We denote $\cB(G)$ the equalizer of diagram 
(\ref{eqeq}) in $\mathrm{DGCat}^{(2)}_{\mathrm{cont}}$. \end{definition}

We say that a subset $T \subset G$ is a subgraph of $G$ if 
\begin{itemize}
\item $T$ is a trivalent  graph
\item If $e$ is an edge of $G$ such that $T \cap e$ is 
non-empty, then $e$ is contained in $T$ 
\end{itemize}
Note that if $T$ is a subgraph of $G$ we have inclusions 
$V_T \subset V_G$ and $E_T \subset E_G$. We can define a restriction functor 
$$
\cB(G) \longrightarrow 
\cB(T).  
$$
This is obtained by considering the natural map between 
\v{C}ech-type diagrams given by the obvious projections 
$$
\xymatrix{
 \prod_{v \in V_G} 
\cB(v) 
\ar@<-.5ex>[r]_-*!/d0.5pt/{\labelstyle }
\ar@<.5ex>[r]^ -*!/u0.7pt/{\labelstyle } 
\ar[d] 
& \prod_{e \in E_G} \cB(e) \ar[d] \\ 
 \prod_{v \in V_T} 
\cB(v) 
\ar@<-.5ex>[r]_-*!/d0.5pt/{\labelstyle }
\ar@<.5ex>[r]^ -*!/u0.7pt/{\labelstyle } 
& \prod_{e \in E_T} \cB(e)
}
$$
It is useful to extend to a general pair of graphs $T \subset G$ the notation for restriction functors that we introduced in  (\ref{eqeq123}) in the case of a vertex and a neighboring edge:  
\begin{definition}
If $T$ is a subgraph of $G$, we denote the restriction functor 
$$ 
R^{ \cB } : \cB(G) \longrightarrow 
\cB(T).
$$
\end{definition}

The definition of $\cB(G)$ allows us to encode the category of matrix factorizations of a toric Calabi-Yau LG model in a simple combinatorial package. We use the notations of section \ref{sec tcy}. 

\begin{theorem}
\label{gluing MF}
Let $P$ be a planar lattice polytope, equipped with a unimodular triangulation $\cT$. Let $G_\cT$ be the dual graph of $\cT$. Then there is an equivalence of dg categories 
$$
MF^\infty(X_\cT, W_\cT) \simeq \cB(G_\cT). 
$$
\end{theorem}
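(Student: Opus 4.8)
The plan is to exhibit both sides as the descent limit of one and the same \v{C}ech-type diagram, using the \'etale descent of $MF^\infty(-,f)$ from Proposition~\ref{prop:descent}. Cover $X_\cT$ by the torus-invariant affine opens $U_\sigma$ attached to the maximal cones $\sigma$ of $F(\cT)$. By the construction of Section~\ref{sec tcy} these maximal cones are the cones $C(T)$ over the $2$-simplices $T\in\cT$ placed at height one, hence are in canonical bijection with the vertices of the dual tropical curve $G_\cT$; the intersection of any two of these charts is $U_{C(T)\cap C(T')}$, and similarly for higher intersections.

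\emph{Local models.} Since $\cT$ is unimodular, $C(T)$ is a smooth cone with primitive ray generators $(1,p)$, $p$ a vertex of $T$; thus $U_{C(T)}\cong\bA^3_\kappa$, with coordinates $x_p$ dual to these generators. Because $W_\cT$ is the monomial attached to $(1,0)\in M$ and $\langle(1,0),(1,p)\rangle=1$ for each generator, $W_\cT$ restricts on $U_{C(T)}$ to the product $\prod_p x_p$. Identifying the three coordinates with the three local regions of $G_\cT$ at the vertex $v=v_T$ (equivalently, with the three vertices of $T$) yields a canonical equivalence $MF^\infty(U_{C(T)},W_\cT)\simeq\cB(v)$. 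Likewise, if $T,T'$ meet along an edge $\bar e$ of $\cT$ --- equivalently, a bounded edge $e$ of $G_\cT$ --- then $U_{C(T)}\cap U_{C(T')}=U_\tau$ with $\tau=C(\bar e)=C(T)\cap C(T')$ the common codimension-one face; $\tau$ is a smooth $2$-cone, so $U_\tau\cong\bG_m\times\bA^2_\kappa$, and the same computation shows $W_\cT|_{U_\tau}$ agrees, up to the invertible coordinate, with the product of the two coordinates dual to the generators of $\tau$. Absorbing this unit as in the proof of Proposition~\ref{prop:resres} gives $MF^\infty(U_\tau,W_\cT)\simeq\cB(e)$, and under these identifications the two natural restrictions $MF^\infty(U_{C(T)},W_\cT)\to MF^\infty(U_\tau,W_\cT)$ agree with the functors $R^\cB$ of Definition~\ref{defresres} --- restriction to the locus where the coordinate attached to the unique region at $v$ not meeting $e$ is inverted --- by Proposition~\ref{prop:resres} and Remark~\ref{remres}. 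A boundary edge $\bar e$ of $P$ (an unbounded edge of $G_\cT$) is handled by adjoining the distinguished open $U_{C(\bar e)}\subset U_{C(T)}$ to the cover, which leaves the union unchanged; again $U_{C(\bar e)}\cong\bG_m\times\bA^2_\kappa$ with $W_\cT$ a product of two coordinates up to a unit, so $MF^\infty(U_{C(\bar e)},W_\cT)\simeq\cB(e)$ for the corresponding external edge.

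\emph{Vanishing of higher overlaps and conclusion.} Any intersection of three or more distinct charts of this cover, and any double overlap of charts whose triangles share at most a vertex, equals $U_{\tau'}$ for $\tau'$ a cone over a vertex of $\cT$ (a ray of $F(\cT)$) or the origin; then either $U_{\tau'}\cong\bG_m^2\times\bA^1_\kappa$ with $W_\cT$ a single coordinate, a smooth function, or $U_{\tau'}$ is the dense torus with $W_\cT$ invertible, so $MF^\infty(U_{\tau'},W_\cT)=0$ in either case. Hence the \v{C}ech cosimplicial object computing $MF^\infty(X_\cT,W_\cT)$ through Proposition~\ref{prop:descent} has, in cosimplicial degrees $0$ and $1$, nonzero non-degenerate parts $\prod_{v}\cB(v)$ and $\prod_{e}\cB(e)$, and vanishing non-degenerate part in all degrees $\geq 2$. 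Therefore the descent limit collapses onto the equalizer of its two coface maps, which is exactly the equalizer of diagram~(\ref{eqeq}), i.e.\ $\cB(G_\cT)$ by Definition~\ref{defff}, the two coface maps being the pair of restriction functors appearing in~(\ref{eqeq}). This produces the asserted equivalence in $dg\cP r^{(2),L}$.

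\emph{Main obstacle.} The conceptual content is elementary once the dictionary is in place; the two points requiring care are (i) the combinatorial matching of the incidence structure of the fan $F(\cT)$ with that of the dual graph $G_\cT$ --- the identification of the local region-sets $I_v$ and $L_e$ with sets of cone generators, the compatibility of the restriction functors with the embeddings $L_e\subset I_v$ of Remark~\ref{remreminc}, and the bookkeeping for boundary/external edges --- and (ii) the argument that the vanishing of the higher \v{C}ech overlaps genuinely forces the descent limit, a limit in $dg\cP r^{(2),L}$, to be computed by the $1$-truncated equalizer of Definition~\ref{defff} rather than by the full totalization of the \v{C}ech object.
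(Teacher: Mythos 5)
Your proposal is correct and follows essentially the same route as the paper: identify the \v{C}ech descent diagram of Proposition~\ref{prop:descent} for the cover of $X_\cT$ by the maximal torus-invariant affine charts with the diagram~(\ref{eqeq}) defining $\cB(G_\cT)$, matching maximal cones with vertices of $G_\cT$ and codimension-one faces with edges. You additionally make explicit two points the paper leaves implicit --- the vanishing of $MF^\infty$ on triple overlaps and on double overlaps of non-adjacent charts (where $W_\cT$ becomes a smooth or invertible function), which is what justifies collapsing the full \v{C}ech totalization to the $1$-truncated equalizer --- and this extra care strengthens rather than alters the argument.
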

\begin{proof}
Let $C$ be the set of maximal cones 
in the fan of $X_\cT$. 
Consider the standard open cover of $X_\cT$ by toric affine patches:   
$
\{U_{\sigma}\}_{\sigma \in C},$ $U_\sigma \cong \bA^3_\kappa$. By Proposition \ref{prop:descent} the category $MF^{\infty}(X_\cT, f_\cT)$ can be expressed as the limit of the \v{C}ech diagram for the open cover 
$\{U_{\sigma}\}_{\sigma \in C}$: the vertices  of this diagram are products of the categories 
$$
MF^{\infty}(U_{\sigma}, f_\cT|_{U_{\sigma}}), \quad \text{ and } \quad MF^{\infty}(U_{\sigma} \cap U_{\sigma'}, f_\cT|_{U_{\sigma} \cap U_{\sigma'}}),  
\quad \text{ $\sigma$, $\sigma' \in C$},  
$$ 
and the arrows are products of pullback functors.

Note that there is a natural bijection $\phi$ between 
the set $V_\cT$ of vertices of $G_\cT$ and $C$. Moreover the definition of 
$I_v$  gives an identification 
$
X_{I_v} \cong U_{\phi(\sigma)},  
$
and thus a canonical equivalence 
$$
\cB(v) \simeq MF^{\infty}(U_{\phi(v)}, f_\cT|_{\phi(v)}). 
$$
Similarly, by Remark \ref{remres}, if $v$ and $v'$ are two vertices of $G_\cT$ and $e$ is the edge connecting them, we obtain a commutative diagram
$$
\xymatrix{
\cB(v) \ar[rr]^-{\simeq} \ar[d]_-{R^{ \cB } } && MF^{\infty}(U_{\phi(v)}, f_\cT|_{\phi(v)}) \ar[d]^-{\iota^*} \\ 
\cB(e) \ar[rr]^-{\simeq} && MF^{\infty}(U_{\phi(v)} \cap U_{\phi(v')}, f_\cT|_{U_{\phi(v)} \cap U_{\phi(v')}})
}
$$
where the horizontal arrows are canonical equivalences, and $\iota^*$ is the restriction of matrix factorizations along the embedding
$$
\iota: U_{\phi(v)} \cap U_{\phi(v')} \rightarrow U_{\phi(v)}. 
$$
Thus the diagrams computing $\cB(G_\cT)$ and $MF^{\infty}(X_\cT, f_\cT)$ are equivalent,  and this concludes the proof. \end{proof}

\subsection{A-branes} 
\subsubsection*{}
As explained in \cite{HV}, the mirror of a toric Calabi-Yau LG model $(X_\cT, W_\cT)$ is a punctured Riemann surface $\Sigma_\cT$ embedded as an algebraic curve in $\bC^* \times \bC^*$. The graph $G_\cT$ is the tropicalization of $\Sigma_\cT$. Since we are interested in studying the A-model on $\Sigma_\cT$ we can disregard its complex structure and focus on its topology, which is captured by the genus and the number of punctures (see for instance \cite{BS} for an explicit algebraic  equation of $\Sigma_\cT$). These can be read off from $G_\cT$.  The genus of $\Sigma_\cT$ is equal to the number of relatively compact connected components in $\bR^2 - G_\cT$. The number of punctures of $\Sigma_\cT$ is equal to the number of unbounded edges of $G_\cT$. 

\begin{figure}[h]
  \centering
  \includegraphics[width=0.7\textwidth]{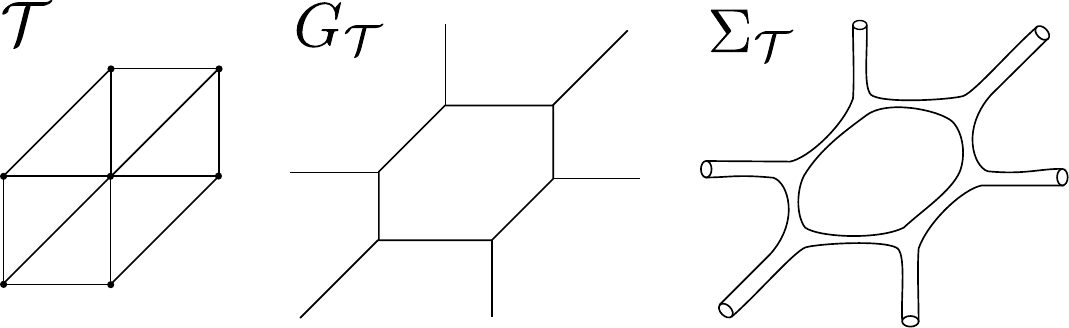}
  \caption{The picture shows an example of the relationship between the triangulation $\cT$, the tropical curve $G_\cT$, and the mirror curve $\Sigma_\cT$.}
  \label{fig:tropical}
\end{figure}

When $\Sigma_\cT$ has genus $0$, the authors of 
\cite{AAEKO} consider the wrapped Fukaya category of $\Sigma_\cT$, and prove that it is equivalent to the category of matrix factorizations of the mirror toric LG model. In this paper we will consider an alternative model for the category of $A$-branes on a 
punctured Riemann surface,  called the \emph{topological Fukaya category}. The construction of the  topological Fukaya category was first suggested by Kontsevich \cite{Ko} and was studied in   \cite{DK, N, STZ} and elsewhere in the case of punctured Riemann surfaces. 
We summarize the theory of the topological Fukaya category in Section \ref{sec top Fuk} below.

\section{The topological Fukaya  category}
\label{sec top Fuk}
In this Section we recall the definition of the topological Fukaya for punctured Riemann surfaces. We will mostly follow the approach of \cite{DK}, see also  \cite{N, STZ, HKK} for related alternative formulations of this theory. It will be important to consider the Ind-completion of the topological Fukaya category. We discuss this in Section \ref{sec:ind}. 

\subsection{The cyclic category and the topological Fukaya category}
\label{fuksmallcycsec}
We briefly review the setting of \cite{DK}. We refer for more details to the original paper and to \cite{D}. 

\begin{definition}[\cite{C}] Let $\Lambda$ be the category defined as follows: 
\begin{itemize}
\item The set of objects of $\Lambda$ is in bijection with the set of natural numbers. For all $n \in N$, $\langle n \rangle \in \Lambda$ is a copy of $S^1$ with $n+1$ marked points given by the $(n+1)$-th roots of  unity. 
\item A morphism 
$
\langle m \rangle \rightarrow \langle n \rangle 
$
is represented by a monotone degree one continuous map $S^1 \rightarrow S^1$ taking the $m+1$ marked points in the domain to the $n+1$ marked points in the range; such maps are considered up to homotopy relative to the marked points.
\end{itemize}
\end{definition}

\begin{proposition}
\label{prop:inter}
There is a natural equivalence of categories $(-)^*: \Lambda \rightarrow  \Lambda^{op}$. 
\end{proposition}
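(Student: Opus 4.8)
The goal is to prove Proposition~\ref{prop:inter}: that Connes' cyclic category $\Lambda$ admits a natural equivalence $(-)^*\colon \Lambda \xrightarrow{\sim} \Lambda^{\mathrm{op}}$. This is a classical fact (the self-duality of $\Lambda$), so the plan is to give a clean geometric construction of the duality functor using the circle model of $\Lambda$ that has just been set up.

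\medskip

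\emph{The plan.} I would build the functor $(-)^*$ directly on objects and morphisms as follows. On objects, send $\langle n\rangle$ to $\langle n\rangle$ (the duality is the identity on objects, up to relabelling). The essential point is the construction on morphisms. Given $\langle n\rangle$, the $n$ marked points (the $n$-th roots of unity) cut $S^1$ into $n$ arcs; I would define the \emph{dual circle} $\langle n\rangle^*$ to be a copy of $S^1$ with one marked point placed in the interior of each of these $n$ arcs — this is again a circle with $n$ marked points, hence isomorphic to $\langle n\rangle$. A monotone degree-one map $f\colon S^1\to S^1$ preserving marked points, representing a morphism $\langle m\rangle \to \langle n\rangle$, then induces in a canonical way a map on the dual circles going the other direction: the preimage structure of arcs under $f$ determines, for each arc of $\langle m\rangle^*$, a well-defined image arc of $\langle n\rangle^*$, and one checks this assignment extends to a homotopy class of monotone marked-point-preserving maps $\langle n\rangle^* \to \langle m\rangle^*$. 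Concretely this is cleanest if one first observes that a morphism $\langle m\rangle \to \langle n\rangle$ in $\Lambda$ is the same data as a monotone map $f$ together with the combinatorial data of how the $m$ points map into the $n$ arcs (equivalently, a ``cyclic'' version of an order-preserving surjection plus a degree), and the dual is obtained by the standard arc/vertex interchange.

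\medskip

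\emph{Key steps, in order.} First, set up the arc–point duality on a single object: describe $\langle n\rangle^*$ precisely and exhibit the (non-canonical but coherent) isomorphism $\langle n\rangle^*\cong\langle n\rangle$. Second, define the action on morphisms via the induced map on arcs, and verify it is well-defined on homotopy classes (this only depends on the monotone structure, so homotopic maps give the same arc assignment). Third, check functoriality: $(g\circ f)^* = f^*\circ g^*$ and $\mathrm{id}^* = \mathrm{id}$, which follows because composing monotone maps composes the arc-preimage assignments contravariantly. Fourth, check that $(-)^*$ is an equivalence: applying the construction twice returns to the original object's arcs-of-arcs, which canonically recovers $\langle n\rangle$, so $(-)^{**}\simeq \mathrm{id}_\Lambda$, giving that $(-)^*$ is essentially surjective and fully faithful. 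Finally, note naturality is automatic from the construction since no choices were made in the morphism assignment. Alternatively — and I might present this as the primary route for brevity — one can invoke the combinatorial description $\Lambda \simeq$ (the category with objects $[n]$ and morphisms generated by the cyclic and simplicial operators modulo Connes' relations) and exhibit the duality as the involution swapping the roles of face and degeneracy-type generators; but the circle picture above is more transparent and matches the paper's conventions.

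\medskip

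\emph{Main obstacle.} The one genuinely delicate point is checking that the arc-induced map on dual circles is well-defined as a morphism in $\Lambda$ — i.e., that it is monotone of degree one and that collapsing/expanding arcs under $f$ does not produce a map that fails to preserve marked points or that depends on the representative. This requires being careful about how marked points of $\langle m\rangle$ that land exactly on marked points versus in the interior of arcs of $\langle n\rangle$ are handled, and about degenerate arcs (arcs contracted to a point by $f$), which dualize to ``jumps'' over marked points. Handling these boundary cases cleanly — perhaps by perturbing to a generic representative first, or by working with the combinatorial encoding where these cases are just bookkeeping — is where the real work lies; everything else (functoriality, the equivalence property, naturality) is then formal.
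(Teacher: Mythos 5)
Your construction is exactly the ``interstice duality'' that the paper invokes by citing \cite{DK} Section 2.5: the paper gives no argument beyond that reference, and the arc/point interchange you describe is precisely the standard proof found there. The proposal is correct and matches the intended approach.
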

\label{prop:dual}
\begin{proof}
This equivalence is called \emph{interstice duality}. See  \cite{DK} Section 2.5. 
\end{proof}

\begin{definition}
\label{defcycocyc}
Let $C$ be an $\infty$-category. 
\begin{itemize}
\item 
A \emph{cyclic object} in $C$ is a  functor $ N(\Lambda^{op})  \rightarrow C$. We denote $C_\Lambda$ the $\infty$-category of cyclic objects in $C$. If $C=\cS$ is the $\infty$-category of spaces we denote  
$$L: N(\Lambda) \to \cS_\Lambda$$ the Yoneda functor. 
\item A \emph{cocyclic object} in $C$ is a functor $N(\Lambda)  \rightarrow C$. We denote $C^\Lambda$ the $\infty$-category of cocyclic objects in $C$. 
\end{itemize}
\end{definition}

Consider the map 
$
\bA^1_\kappa \stackrel{z^n}{\longrightarrow} \bA^1_\kappa$. 
We denote $\cE^{n}$ the ($\bZ_{n+1}$-graded category)  of matrix factorizations $MF(\bA^1_\kappa, z^{n+1}) \in \mathrm{DGCat}^{(2)}_{\mathrm{small}}$, see Section 2.3.5 of \cite{DK} for additional details.  
The categories $\cE^n$ assemble to a cyclic object in $\mathrm{DGCat}^{(2)}_{\mathrm{small}}$. We state the precise result below. 

\begin{proposition}[\cite{DK} Proposition 2.4.1, \cite{D} Theorem 3.2]
\label{prop E}
There is a cocyclic object 
$$
\cE^\bullet: N(\Lambda) \rightarrow \mathrm{DGCat}^{(2)}_{\mathrm{small}}
$$ that is defined on objects by the assignment 
$$
\langle n \rangle \in \Lambda \mapsto \cE^{n} \in \mathrm{DGCat}^{(2)}_{\mathrm{small}}.
$$ 
\end{proposition}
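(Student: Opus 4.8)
The plan is to produce $\cE^\bullet$ as a strict cocyclic object in a point-set model for $dgCat^{(2)}$ and then localize. First I would fix small models: for each $n$ the category $\cE^n = MF(\bA^1_\kappa, z^{n+1})$ is generated by the $n$ indecomposable Koszul matrix factorizations $M_a = (z^a, z^{n+1-a})$, $a = 1, \dots, n$, so that $\cE^n \simeq \Perf^{(2)}(A_n)$ where $A_n = \End(\bigoplus_a M_a)$ is an explicit finite-dimensional $\bZ_2$-graded dg algebra of cyclic-Nakayama (zigzag) type. Passing to $\Perf^{(2)}$ of dg algebras and dg bimodules turns the problem into building a strictly functorial cocyclic diagram of algebras-with-bimodules, which can be checked at the level of point-set data.

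Next I would use the combinatorics of the cyclic category. By interstice duality (Proposition \ref{prop:inter}), constructing a cocyclic object $N(\Lambda) \to dgCat^{(2)}$ is equivalent to constructing a cyclic object, so it suffices to assign functors to the generators of $\Lambda$ in Connes' presentation --- the cofaces $\delta_i$, the codegeneracies $\sigma_i$, and the cyclic automorphisms $\tau_n$ with $\tau_n^{n+1} = \mathrm{id}$ --- subject to the simplicial relations together with the mixed relations involving $\tau$. Geometrically $\langle n \rangle$ is the circle cut into $n$ arcs by the $n$-th roots of unity, matched with the $n$ generators $M_a$ of $\cE^n$; a monotone degree-one map of marked circles then induces a functor by the standard pushforward recipe. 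Concretely, $\tau_n$ goes to the Auslander--Reiten / rotation autoequivalence of $\cE^n$ (cyclically permuting the $M_a$ with a degree shift on wrap-around), $\sigma_i$ goes to the functor induced by the map $z^{n+1} \mapsto z^n$ on the total space (merging two marked points), and $\delta_i$ goes to the spherical functor attached to inserting a root of unity; these are precisely the functors described in Section 2.3.5 of \cite{DK}.

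With the generators in hand I would verify the cyclic relations: the simplicial identities among the $\delta$'s and $\sigma$'s reduce to elementary identities of Koszul matrix factorizations and their morphism complexes, while the relations $\tau_n \delta_i = \delta_{i-1} \tau_{n-1}$, $\tau_n \sigma_i = \sigma_{i-1}\tau_{n+1}$ and $\tau_n^{n+1} = \mathrm{id}$ reduce to bookkeeping of the $\bZ_2$-grading and the shifts introduced by the rotation. An equivalent and more structural route is to exhibit $\cE^\bullet$ as a $\bZ_2$-folded variant of the Waldhausen $S_\bullet$-construction of a small Calabi--Yau $\infty$-category, for which the paracyclic structure is automatic and the compatible duality upgrades it to a genuine cyclic object --- this is essentially the argument of \cite{D} Theorem 3.2.

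The main obstacle is strictness and homotopy coherence: the naive assignment of functors to generators only satisfies the cyclic relations up to quasi-isomorphism, so the real content is to choose the models $A_n$ and the connecting bimodules compatibly enough that the relations hold on the nose in a combinatorial model category presenting $dgCat^{(2)}$ (the route of \cite{DK}), or else to organize all the coherence homotopies into the data of an honest functor out of $N(\Lambda)$ (the route of \cite{D}). I would follow the latter, taking \cite{D} Theorem 3.2 as the clean packaging of the construction; once the strict (or coherent) cocyclic object is in hand, localizing at Morita equivalences yields the stated $\cE^\bullet : N(\Lambda) \to dgCat^{(2)}$.
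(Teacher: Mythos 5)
Your proposal is consistent with the paper's treatment: the paper gives no proof of this proposition but imports it wholesale from \cite{DK} (Proposition 2.4.1) and \cite{D} (Theorem 3.2), and your sketch correctly reconstructs the content of those constructions (the identification $\cE^n \simeq \Perf^{(2)}$ of the $\bZ_2$-folded $A_n$-zigzag algebra, the geometric matching of generators with arcs of the marked circle, and the coherence issue resolved by the $S_\bullet$-construction route of \cite{D}) before ultimately deferring to the same references. The only caveat is a minor indexing one --- in the paper's convention $\langle n\rangle$ has $n$ marked points, so the cyclic rotation satisfies $\tau^{n}=\mathrm{id}$ rather than $\tau_n^{n+1}=\mathrm{id}$ --- which does not affect the substance of the argument.
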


\begin{lemma}
\label{satadjoints}
The structure maps of the cocyclic object 
$\cE^\bullet$ admit a  left and a right adjoint. 
\end{lemma}
\begin{proof}
For all $n \in \mathbb{N}$ the category $\mathcal{E}^n$ is the $\mathbb{Z}_2$-periodization of a \emph{smooth} and \emph{proper} dg category, namely the dg category of representations of the $A_{n-1}$-quiver (see \cite{DK}, Theorem 2.3.6).  Now,   functors between smooth and proper dg categories always admit both a right and a left adjoint. In the  $\mathbb{Z}$-graded setting  a convenient recent reference for this fact is Theorem 7.4 \cite{Ge}, and the proof carries over   to the $\mathbb{Z}_2$-graded setting.  
\end{proof}

Let 
$$
(-)^{op  }: \mathrm{DGCat}^{(2)}_{\mathrm{small}} \rightarrow \mathrm{DGCat}^{(2)}_{\mathrm{small}}
$$ 
be the auto-equivalence of $\mathrm{DGCat}^{(2)}_{\mathrm{small}}$  sending a category to its opposite category. 
\begin{definition}
\label{E}
Denote $\cE_\bullet: \Lambda^{op} \rightarrow \mathrm{DGCat}^{(2)}_{\mathrm{small}}$ the cyclic object defined as the composite
$$
N(\Lambda^{op}) \stackrel{N(-)^*}{\longrightarrow} N(\Lambda) \stackrel{\cE^\bullet}{\longrightarrow} \mathrm{DGCat}^{(2)}_{\mathrm{small}} \stackrel{(-)^{op}}{\longrightarrow} \mathrm{DGCat}^{(2)}_{\mathrm{small}}.
$$ 
\end{definition}

\begin{remark}
\label{rem:morita}
There are several equivalent ways to define $\cE_\bullet$. We list them below.
\begin{enumerate}
\item Denote $(-)^\vee: (\mathrm{DGCat}^{(2)}_{\mathrm{small}})^{op} \rightarrow \mathrm{DGCat}^{(2)}_{\mathrm{small}}$ the functor mapping a category  $D$ in $\mathrm{DGCat}^{(2)}_{\mathrm{small}}$ to 
$\mathrm{Fun}(D, \mathrm{Vect}^{(2), \mathrm{fd}})$. Then 
$\cE_\bullet$ is equivalent to the cyclic object given by the composition: 
$$
\xymatrix{
N(\Lambda^{op}) \ar[r]^-{(\cE^{\bullet})^{op}} &  (\mathrm{DGCat}^{(2)}_{\mathrm{small}})^{op} \ar[r]^-{(-)^\vee} & \mathrm{DGCat}^{(2)}_{\mathrm{small}}.
}
$$
See \cite{DK} Section 3.2 for a discussion of this fact.  
\item By Lemma \ref{satadjoints} all the structure maps of $\cE^\bullet$ admit right adjoints. Passing to right adjoints yields a contravariant functor  
out of 
$N(\Lambda),$ which we denote 
$(\cE^{\bullet})^{R},$ 
$$
\xymatrix{
N(\Lambda^{op}) \ar[r]^-{(\cE^{\bullet})^{R}} &
\mathrm{DGCat}^{(2)}_{\mathrm{small}} 
}
$$
and which is also equivalent to the cyclic object  
$\cE_\bullet.$  
\end{enumerate}
\end{remark}

To any ribbon graph $X$, we may associate a cyclic set $\cL(X) \in \cS et_{\Lambda}$ (called the \emph{cyclic membrane} in \cite{DK}) as follows: Given a vertex $v$ of $X$, denote by $E(v)$ the set of edges at $v$, and set $B(v) = E(v)^{*}$. Let $\Lambda^{B(v)}$ be the cylic simplex corresponding to the cyclically ordered set $B(v)$. For an edge $e$ of $X$, we have a two element set $B(e) = V(e)^{*}$, where $V(e)$ is the set of vertices that $e$ joins. Let $\Lambda^{B(e)}$ be corresponding cyclic 1-simplex. Next, define the \emph{incidence category} $X_{[0,1]}$ of $X$: the set of objects of $X_{[0,1]}$ is the disjoint union of the set of vertices and the set of edges of $X$, and there is a unique morphism $v \to e$ for each flag $(v,e)$ in $X$ (consisting of a pair of a vertex $v$ and an edge $e$ incident to $v$). The cyclic simplices $\Lambda^{B(v)}$ and $\Lambda^{B(e)}$ assemble into a functor $U_{X} : X_{[0,1]}^{\op} \to \cS et_{\Lambda}$, and we define
\begin{equation}
  \cL(X) = \varinjlim U_X \in \cS et_{\Lambda}
\end{equation}

\begin{proposition}[Proposition 3.4.4 \cite{DK}]
  \label{prop:ribbon}
  The cyclic membrane construction
  $$
  \cL : \cR ib \rightarrow \cS et_{\Lambda}, \quad X  \in \cR ib \mapsto \cL(X).
  $$
  extends to a functor from the category $\cR ib$ of ribbon graphs and contractions between them to the category of cyclic sets.
\end{proposition}
 
If $X$ and $X'$ are ribbon graphs, a contraction 
$
X \rightarrow X'
$ 
is a map between the underlying topological spaces having the property that the preimage of each point in $X'$ is either a point, or a sub-tree of $X$. We refer the reader to \cite{DK} for additional details on the definition of $\cR ib$ and a proof of the proposition.

As explained in \cite{D} the functor constructed in Proposition \ref{prop:ribbon} can be enhanced to a functor of $\infty$-categories 
$$
\cL: N(\cR ib) \rightarrow \cS_\Lambda 
$$
where $\cS$ is the $\infty$-category of spaces. 


\begin{definition}
\label{kan}
\begin{itemize}
\item  Denote $\cF^ {\cE}: \cS_\Lambda \rightarrow  \mathrm{DGCat}^{(2)}_{\mathrm{small}}$ the $\infty$-categorical left Kan extension of $$
N(\Lambda) \stackrel{\cE^\bullet}{\longrightarrow} \mathrm{DGCat}^{(2)}_{\mathrm{small}} 
$$
along
$
N(\Lambda) \xrightarrow{L} \cS_\Lambda.
$
\item 
Denote 
$\cF_{\cE}: (\cS_\Lambda)^{op} \rightarrow \mathrm{DGCat}^{(2)}_{\mathrm{small}}$ the $\infty$-categorical right Kan extension of  $$
N(\Lambda^{op}) \stackrel{\cE_ \bullet}{\longrightarrow}\mathrm{DGCat}^{(2)}_{\mathrm{small}} 
$$
along $
N(\Lambda)^{op} \xrightarrow{(L)^{op}} (\cS_\Lambda)^{op}.
$
\end{itemize}
\end{definition}

Let $\Sigma$ be a Riemann surface with boundary and let $X \subset \Sigma$ be a spanning ribbon graph.   
The implementation of Kontsevich's ideas due to Dyckerhoff and Kapranov \cite{DK} (see also  \cite{N} and \cite{STZ}) gives ways to compute a model for the Fukaya category of $\Sigma$ from the combinatorics of $X$. We will refer to it as the topological (compact) Fukaya category of $X$ or sometimes as the topological (compact) Fukaya category of the pair $(\Sigma, X)$. The next definition gives the construction of the topological (compact) Fukaya category, see Definition 4.1.1 of \cite{DK}.

\begin{definition}
\label{top fuk}
Let $(\Sigma, X)$ be a punctured Riemann surface. 
\begin{itemize}

\item The \emph{topological  Fukaya category of $X$} is given by 
$$
\cF^ {top}(X) := \cF^{\cE}(\cL(X)).
$$

\item The \emph{topological compact Fukaya category} of $X$ is given by 
$$
\cF_{top}(X) := \cF_{\cE}(\cL(X))
$$

\end{itemize}
\end{definition}

\begin{remark}
The terminology we use in this paper differs slightly from the one introduced  in \cite{DK}. Namely 
\begin{itemize}
\item $\cF^{top}(X)$ in \cite{DK}  is called  the topological \emph{coFukaya category},
\item $\cF_{top}(X)$ in  \cite{DK} is called the topological \emph{Fukaya category}.
\end{itemize}
\end{remark}
\begin{proposition}
\label{proppropdual}
There is a natural equivalence
$$
\cF_ {top}(X) \simeq \mathrm{Fun}(\cF^ {top}(X), \mathrm{Vect}^{(2), \mathrm{fd}}).
$$
\end{proposition}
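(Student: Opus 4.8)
The plan is to realize the topological coFukaya category $\cF_{top}(X)$ as the pointwise $\kappa$-linear dual of the topological Fukaya category $\cF^{top}(X)$, by checking that the two right Kan extensions of Definition \ref{kan} differ only by post-composition with the duality functor $(-)^\vee \colon (dgCat^{(2)})^{op} \to dgCat^{(2)}$, $D \mapsto Fun(D, \Perf^{(2)}(\kappa))$. The input is Remark \ref{rem:morita}(1), which identifies the cyclic object $\cE_\bullet \colon N(\Lambda^{op}) \to dgCat^{(2)}$ with the composite $(-)^\vee \circ (\cE^\bullet)^{op}$, where $(\cE^\bullet)^{op} \colon N(\Lambda^{op}) \to (dgCat^{(2)})^{op}$ is the functor whose right Kan extension along the functor $j \colon N(\Lambda^{op}) \to \cS_\Lambda$ appearing in Definition \ref{kan} defines $\cF^\cE$.

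First I would record that $(-)^\vee$ is a right adjoint. This comes from the tensor--hom adjunction for the internal $Fun$ of $dgCat^{(2)}$ together with the symmetry of the monoidal structure: for all $C, D \in dgCat^{(2)}$ there are natural equivalences
\[
\Hom_{dgCat^{(2)}}\!\big(C,\, Fun(D, \Perf^{(2)}(\kappa))\big) \;\simeq\; \Hom_{dgCat^{(2)}}\!\big(C \otimes D,\, \Perf^{(2)}(\kappa)\big) \;\simeq\; \Hom_{dgCat^{(2)}}\!\big(D,\, Fun(C, \Perf^{(2)}(\kappa))\big),
\]
which say precisely that $(-)^\vee \colon (dgCat^{(2)})^{op} \to dgCat^{(2)}$ is right adjoint to $(-)^\vee \colon dgCat^{(2)} \to (dgCat^{(2)})^{op}$. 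In particular $(-)^\vee$ preserves every limit that exists in $(dgCat^{(2)})^{op}$; the underlying point is that $- \otimes D$ preserves colimits for each $D$, its right adjoint being $Fun(D,-)$.

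Then I would invoke the general fact that a right adjoint $R$ commutes with right Kan extensions, $R \circ \mathrm{Ran}_j F \simeq \mathrm{Ran}_j(R \circ F)$, which follows from the universal property of $\mathrm{Ran}_j$ by whiskering with the left adjoint of $R$ (equivalently, from the pointwise comma-category formula, using that $R$ preserves the relevant limits — this also shows that $\mathrm{Ran}_j(R \circ F)$ exists once $\mathrm{Ran}_j F$ does). Taking $R = (-)^\vee$ and $F = (\cE^\bullet)^{op}$ and combining with Remark \ref{rem:morita}(1) gives an equivalence of functors $\cS_\Lambda \to dgCat^{(2)}$
\[
\cF_\cE \;=\; \mathrm{Ran}_j(\cE_\bullet) \;\simeq\; \mathrm{Ran}_j\!\big((-)^\vee \circ (\cE^\bullet)^{op}\big) \;\simeq\; (-)^\vee \circ \mathrm{Ran}_j\!\big((\cE^\bullet)^{op}\big) \;=\; (-)^\vee \circ \cF^\cE ,
\]
and evaluating this at the simplicial set $\cL(X)$ and unwinding Definition \ref{top fuk} yields $\cF_{top}(X) = \cF_\cE(\cL(X)) \simeq (-)^\vee\big(\cF^\cE(\cL(X))\big) = Fun\big(\cF^{top}(X), \Perf^{(2)}(\kappa)\big)$, which is the claim.

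The step that needs real care is the commutation of $(-)^\vee$ with the right Kan extension along $j$ — equivalently, that $(-)^\vee$ preserves the (a priori possibly large) limits over the comma $\infty$-categories $(s \downarrow j)$, $s \in \cS_\Lambda$, that compute $\cF^\cE$ pointwise; this is exactly what the right-adjoint property of the second paragraph supplies, so there is neither circularity nor a size issue. I would emphasize that no reflexivity or dualizability assumption on $\cF^{top}(X)$ is required, since only the single duality $\cF_{top}(X) \simeq (\cF^{top}(X))^\vee$ is asserted and not its converse. A more hands-on alternative would be to run the same computation after restricting to $\cL(X)$: use the cellular description of $\cL(X)$ to present $\cF^{top}(X)$ as a limit in $(dgCat^{(2)})^{op}$ of copies of the $\cE^{n}$ and $\cF_{top}(X)$ as the corresponding limit of the $(\cE^{n})^\vee$, then commute $Fun(-,\Perf^{(2)}(\kappa))$ past the limit; but the functor-level argument above is cleaner and choice-free.
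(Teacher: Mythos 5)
Your argument is correct, and it is essentially the argument the paper has in mind: the paper's own ``proof'' is only a pointer to the discussion after Definition 4.1.1 of \cite{DK}, and what you have written is precisely the content of that discussion --- namely Remark \ref{rem:morita}(1) identifying $\cE_\bullet$ with $(-)^\vee\circ(\cE^\bullet)^{op}$, together with the observation that $(-)^\vee\colon (dgCat^{(2)})^{op}\to dgCat^{(2)}$ is a right adjoint (by the tensor--hom adjunction) and therefore commutes with the pointwise right Kan extensions of Definition \ref{kan}. No gaps; your care about the existence of the comma-category limits and about not needing reflexivity of $\cF^{top}(X)$ is exactly the right level of caution.
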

\begin{proof}
See the discussion after Definition 4.1.1 of \cite{DK}. 
\end{proof}


\subsection{The Ind-completion of the topological Fukaya category}
\label{sec:ind}
In this Section we introduce the Ind-completed version of the topological (compact) Fukaya category. This plays an important role in proving that $\cF^{top}(-)$ exhibits an interesting sheaf-like behavior with respect to closed coverings of ribbon graphs that we study in Section \ref{sec:closed}. We remark that Definitions \ref{ind E} and \ref{ind kan} mirror exactly Proposition \ref{prop E} and Definition \ref{E} and \ref{kan} from the previous Section, the only difference being that we are now working with cocomplete dg categories.

\begin{definition}
\label{ind E}
\begin{itemize}
\item 
Denote $\cI \cE^\bullet: N(\Lambda) \rightarrow \mathrm{DGCat}^{(2)}_{\mathrm{cont}}$ the cocylic object defined by the composition: 
$$
N(\Lambda) \stackrel{\cE^\bullet}{\longrightarrow} \mathrm{DGCat}^{(2)}_{\mathrm{small}} \stackrel{\mathrm{Ind}}{\longrightarrow} \mathrm{DGCat}^{(2)}_{\mathrm{cont}}.
$$ 
\item Denote $\cI \cE _\bullet: N(\Lambda^{op}) \rightarrow \mathrm{DGCat}^{(2)}_{\mathrm{cont}}$ the cyclic object defined by the composition: 
$$
N(\Lambda^{op}) \stackrel{\cE_\bullet}{\longrightarrow} \mathrm{DGCat}^{(2)}_{\mathrm{small}} \stackrel{\mathrm{Ind}}{\longrightarrow} \mathrm{DGCat}^{(2)}_{\mathrm{cont}}.
$$ 
\end{itemize}
\end{definition}       


\begin{remark}
\label{IndMoritaInd}
The same consideration of Remark \ref{rem:morita} apply to the Ind-completed setting. In particular, the structure maps of $\cI \cE^\bullet$ admit right and left adjoints, which are the Ind-completions of the right and left adjoints given by Lemma \ref{satadjoints}. Then  the cyclic object $\cI \cE_\bullet$ is equivalent to $(\cI \cE^\bullet)^R,$ which is defined exactly in the same way as in Remark \ref{rem:morita}. 
\end{remark}

\begin{definition}
\label{ind kan}
\begin{itemize}
\item  Denote $\cI \cF^ \cE: \cS_\Lambda  \rightarrow  \mathrm{DGCat}^{(2)}_{\mathrm{cont}}$ the $\infty$-categorical left Kan extension of 
$$
N(\Lambda) \stackrel{\cI \cE^\bullet }{\longrightarrow}\mathrm{DGCat}^{(2)}_{\mathrm{cont}}   
$$
along $N(\Lambda) \xrightarrow{L} \cS_\Lambda$. 
\item 
Denote 
$\cI \cF_{\cE}: (\cS_\Lambda)^{op} \rightarrow \mathrm{DGCat}^{(2)}_{\mathrm{cont}}$ the $\infty$-categorical right Kan extension of 
$$ 
N(\Lambda^{op}) \stackrel{\cI \cE_\bullet}{\longrightarrow} \mathrm{DGCat}^{(2)}_{\mathrm{cont}}.  
$$
along $N(\Lambda^{op}) \xrightarrow{(L)^{op}} (\cS_\Lambda)^{op}$. \end{itemize}
\end{definition}


\begin{proposition}
\label{prop:_top}
For all $X \in \cS_\Lambda$ there is an equivalence 
 $$
 \cI \cF ^\cE(X) \simeq \cI \cF _\cE(X). 
 $$ 
 \end{proposition}
\begin{proof}
This is a formal consequence of Proposition \ref{limit=colimit}. 
By Remark \ref{IndMoritaInd}, $\cI \cE_\bullet$ can be realized by taking right adjoints of the structure morphisms of $\cI \cE^\bullet$. If  $\cL$ is in $\cS_\Lambda,$  by the general formula for pointwise Kan extensions we have 
$$
 \cI \cF^\cE (\cL) 
\, \, = \varinjlim_{\{L(\langle n \rangle) \rightarrow \cL\}} \cI \cE^ n 
\, \, \simeq \varprojlim_{\{L(\langle n \rangle) \rightarrow \cL\}}  \cI \cE_ n  \, \, = \, \,  \cI \cF_\cE (\cL) \quad \text{in} \quad \mathrm{DGCat}^{(2)}_{\mathrm{cont}}. 
$$

\end{proof}

\begin{definition}
 \label{defprop:_top}
 If $X$ is a ribbon graph, we set 
$$
\cF_\infty^{top}(X):= \cI \cF ^\cE(\cL (X)) \simeq \cI \cF _\cE(\cL (X)). 
$$ 
We call $\cF_\infty^{top}(X)$ the \emph{Ind-completed topological Fukaya category of X}. 
\end{definition}
 
By Lemma \ref{indsmallcol} Ind-completion preserves colimits, and this immediately implies the following statement.  \begin{proposition}
 \label{prop:I=Ind}
 If $X$ is a ribbon graph there is an equivalence 
 $$\cF_\infty^{top}(X)\simeq \mathrm{Ind}(\cF^{top}(X)).$$  
 \end{proposition}
  
 Proposition \ref{prop:_top} and Definition \ref{defprop:_top} indicate a key difference with the setting of small categories considered in  Section \ref{fuksmallcycsec}:  if we work with the Ind-completed (co)cyclic objects $\cI \cE^\bullet$ and 
 $\cI \cE_\bullet,$ there is no difference between Fukaya and compact Fukaya category. However, it is important to remark that  $\cF_\infty^{top} (X)$ is not equivalent in general to the Ind-completion  of $\cF_{top} (X)$. Instead $\mathrm{Ind}(\cF_{top} (X))$ is a full subcategory of $\cF_\infty^ {top} (X)$. We clarify the relationship between these two categories in Example \ref{ex:_top} below.

\begin{example}
\label{ex:_top}
Consider the ribbon graph $X$ given by a loop with no vertices. We can tabulate the value of the topological (compact) Fukaya category of $X$ and of its Ind-completed version as follows:
\begin{itemize}
\item The category $\cF^{top}(X)$ is equivalent to 
$\Perf^{(2)}(\bG_m)$. 
\item The category $\cF_{top}(X)$ is equivalent to the full subcategory of $\Perf^{(2)}(\bG_m)$ given by complexes  with compact support, $\Perf^{(2)}_{cs}(\bG_m)$. 
\item By Proposition \ref{prop:I=Ind} the category $\cF_\infty^{top}(X)$ is equivalent to $\cQ Coh^{(2)}(\bG_m)$.
\end{itemize}
Note that $\mathrm{Ind}(\cF_{top}(X)) \simeq \mathrm{Ind}(\Perf^{(2)}_{cs}(\bG_m))$ is a strict 
sub-category of $\cF_\infty^{top}(X) \simeq \cQ Coh^{(2)}(\bG_m)$. 
\end{example}

\section{The topological Fukaya category and restrictions} 
\label{restrictions closed}
In this Section we explore various  naturality properties of $\cF^{top}_
\infty$ with respect to open and closed embeddings of ribbon graphs. As first suggested by Kontsevich \cite{Ko}, the topological Fukaya category behaves like a (co)sheaf with respect to open  covers. This aspect was investigated for instance in \cite{STZ, D}. In Section \ref{sec:gluing} we will prove that, additionally, the topological Fukaya category behaves like a sheaf also with respect to certain closed covers. 

\subsection{Restriction to open subgraphs}
Let $X$ be a ribbon graph. With small abuse of notation we denote $X$ also its underlying topological space. We say that $Y \subset X$ is a \emph{subgraph} if it is a  subspace having the property that, if the intersection of $Y$ with an edge $e$ of $X$ is not empty or a vertex, then $e$ is contained in $Y$. If $Y$ is a subgraph of $X$, then it is canonically the underlying topological space of a ribbon graph, which we also denote $Y$.   Note that if $U$ and $V$ are open subgraphs of $X$, then their intersection $U \cap V$ is also an open subgraph of $X$. 

\begin{proposition}
\label{prop:open1}
Let $X$ be a ribbon graph and let 
$U \subset X$ be an open subgraph. Then there are  corestriction functors 
\begin{itemize}
\item $
C_U: \cF^{top}(U) \rightarrow \cF^{top}(X)
$
\item 
$C_{\infty, U} : \cF_\infty^{top}(U) \rightarrow \cF_\infty^{top}(X)
$
\end{itemize}
When the target of the corestriction functors is not clear from the context we will use the notations $C_U^X$ and $C^X_{\infty, U}$. 
\end{proposition}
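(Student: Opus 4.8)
The plan is to construct the corestriction functors from the basic naturality of the Kan-extension construction of $\cF^{\cE}_\infty$, together with the fact that an open inclusion of ribbon graphs induces the appropriate map on the associated cyclic spaces $\cL(-)$. First I would recall that, by Proposition \ref{prop:ribbon} and its $\infty$-categorical enhancement, $\cL$ is functorial only for \emph{contractions} of ribbon graphs, which is the wrong variance for an open inclusion $U \subset X$. The standard device (as in \cite{DK, STZ, D}) is to pass to the dual picture: an open inclusion $U \hookrightarrow X$ corresponds, after an appropriate subdivision of $X$, to a contraction $X \to X/(X \setminus U)$ in the opposite direction only at the level of the \emph{co}sheaf, so that on the level of the cyclic object $\cL$ one gets a morphism $\cL(U) \to \cL(X)$ in $\cS_\Lambda$ obtained by collapsing the complementary edges to the marked points. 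Concretely, I would first reduce to the case where $X \setminus U$ is a union of edges and vertices meeting $U$ only at vertices, subdivide so that each such boundary vertex of $U$ is at worst trivalent and carries the germ of the outgoing edge of $X\setminus U$, and then exhibit the map $\cL(U)\to\cL(X)$ by sending each such germ to the extra marked point it creates on the relevant circle $\langle n\rangle$.

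The key steps, in order, are: (1) formalize the map $\cL(U) \to \cL(X)$ in $\cS_\Lambda$ associated to an open subgraph, checking it is well defined up to contractible choice (independence of the subdivision used, which follows because $\cL$ inverts contractions up to equivalence); (2) apply $\cF^{\cE}$ (respectively $\cF^{\cE}_\infty$), which is \emph{contravariant} on $\cS_\Lambda$ by Definition \ref{kan}, to obtain a functor in the direction $\cF^{\cE}(\cL(X)) \to \cF^{\cE}(\cL(U))$ — wait, that is the wrong way, so (2$'$) instead I use the \emph{coFukaya} variant $\cF_{\cE}$, which is covariant, together with the duality of Proposition \ref{proppropdual}, or more directly observe that the corestriction $C_U$ is by definition the functor adjoint/dual to the evident restriction $\cF_{top}(X)\to\cF_{top}(U)$; (3) produce the $\cF^{top}$-level functor $C_U$ as the $\kappa$-linear dual of that coFukaya restriction via Proposition \ref{proppropdual}, and then (4) obtain $C_{\infty,U}$ by $\mathrm{Ind}$-completion, using Proposition \ref{prop:I=Ind} to identify $\cI\cF^{top}=\mathrm{Ind}(\cF^{top})$ and the fact (from the construction of $\cF^{\cE}_\infty$ via $dg\cP r^{(2),L}$ and $dg\cP r^{(2),R}$ in Proposition \ref{prop:_top}) that this dual passes to a colimit-preserving functor between presentable categories. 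Finally I would record naturality: for a tower $W\subset U\subset X$ the composite $C_U^X\circ C_W^U$ is canonically equivalent to $C_W^X$, which is immediate from functoriality of $\cL(-)\to\cL(-)$ in $\cS_\Lambda$.

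The main obstacle I anticipate is step (1): making the assignment $U \mapsto (\cL(U)\to\cL(X))$ genuinely functorial and homotopy-coherent, rather than merely defined on objects, because $\cL$ is only a functor out of $\cR ib$ with \emph{contractions} as morphisms, and open inclusions are not contractions. One clean way around this is to work throughout with the \emph{simplicial}/cyclic-set model of $\cL(X)$, in which an open subgraph $U$ determines an actual subobject (or quotient-to-marked-points) of the cyclic set $\cL(X)$, so the map is visibly natural in $U$; alternatively, one invokes the bar-construction description of these Kan extensions and checks the compatibility on the relevant diagrams of $\langle n\rangle$'s. Once the map in $\cS_\Lambda$ is pinned down coherently, everything else is formal: applying $\cF_{\cE}$ / dualizing / $\mathrm{Ind}$-completing are all functorial operations, so the existence and naturality of $C_U$ and $C_{\infty,U}$ follow without further difficulty. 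I would remark that this is exactly the construction appearing (in slightly different language) in \cite{STZ} and \cite{D}, and the contribution here is only to record it in the present notation.
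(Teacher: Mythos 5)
Your overall strategy --- produce from the open inclusion $U\subset X$ a morphism $\cL(U)\to\cL(X)$ in $\cS_\Lambda$ (equivalently, a map between the diagrams of elementary categories $\cE^{n}$ indexed by the cells of $U$ and of $X$), apply the state--sum functor, and then Ind-complete --- is exactly the construction the paper is invoking: its proof of this proposition is a citation of Section 4 of \cite{D}, where the corestriction is built in precisely this way, followed by the observation that $C_{\infty,U}$ is the Ind-completion of $C_U$. So in substance you are reconstructing the cited argument rather than taking a different route.

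Two points in your execution need repair, though. First, the variance scare in step (2) and the ensuing detour through the coFukaya category are unnecessary: by the explicit formula recorded in the proof of Proposition \ref{prop:_top}, $\cF^{\cE}$ is computed as a limit in $(dgCat^{(2)})^{op}$ (resp.\ in $dg\cP r^{(2),R}$) over the category of maps $L(\langle n\rangle)\to\cL$, i.e.\ as a \emph{colimit} of the categories $\cE^{n}$ in $dgCat^{(2)}$ over the cells of $\cL(X)$. A morphism $\cL(U)\to\cL(X)$ induces a functor of indexing categories and hence a canonical comparison functor of colimits $\cF^{top}(U)\to\cF^{top}(X)$, which already points in the corestriction direction; this is the same mechanism that produces the pushout squares of Proposition \ref{prop:open3}. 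Second, the fallback you propose in step (2$'$) is circular relative to the paper's conventions: Definition \ref{prop:open2} \emph{defines} $R^{U}:\cF_{top}(X)\to\cF_{top}(U)$ as the dual of $C_U$, so you cannot take ``$C_U$ is the dual of the evident coFukaya restriction'' as a definition; and even granting an independent construction of that restriction from the limit description of $\cF_{\cE}$, recovering $C_U$ by dualizing again requires the identification $Fun(\cF_{top}(U),\Perf^{(2)}(\kappa))\simeq\cF^{top}(U)$, a double-dual statement that Proposition \ref{proppropdual} does not provide and that needs a separate (true but nontrivial) argument. Finally, the opening gloss of the open inclusion as a contraction $X\to X/(X\setminus U)$ is misleading: $\cL$ sends contractions to equivalences, and the corestriction does not arise that way; your later, concrete description --- after subdivision, every cell of $U$ is a cell of $X$, giving an actual map of cyclic sets --- is the correct one and is all that is needed.
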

\begin{proof}
  The construction of the functor $C_{U}$ is a formal consequence of the definition of $\cF^{top}(X)$ as a colimit. The definition $\cF^{top}(X) = \cF^{\cE}(\cL(X))$ expresses $\cF^{top}(X)$ as a colimit of dg categories indexed by the vertices and edges of $X$. In the same way, $\cF^{top}(U)$ is defined as a colimit of dg categories indexed by the vertices and edges of $U$, which are a subset of those of $X$. Thus the colimit diagram defining $\cF^{top}(U)$ is a subdiagram of the one defining $\cF^{top}(X)$, and $C_{U}$ is the resulting map between the colimits. The functor $C_{\infty,U}$ is obtained by Ind-completion. 
  See also \cite{D} Section 4 for a treatment of these corestriction functors. 
\end{proof}
\begin{remark}
By Proposition \ref{prop:I=Ind} 
$\, \cF_\infty^{top}(U)
$ and $\cF_\infty^{top}(X)$ are equivalent to the Ind-completions 
of $\cF^{top}(U)$ and $\cF^{top}(X)$. 
There is a natural equivalence  
$$
C_{\infty, U} \simeq \mathrm{Ind}(C_U)  : \cF_\infty^{top}(U) \rightarrow \cF_\infty^{top}(X). 
$$ 
In particular the corestriction $C_{\infty, U}$ is a morphism in $\mathrm{DGCat}^{(2)}_{\mathrm{cont}}$. 
\end{remark}

\begin{definition}
\label{prop:open2}
Let $X$ be a ribbon graph and let 
$U \subset X$ be an open subgraph. Then we define \emph{restriction functors}:  
\begin{itemize}
\item 
By Proposition \ref{proppropdual} there is a  natural equivalence between $\cF_{top}(-)$ and the dual of 
$\cF^{top}(-)$. The restriction  
$R^U: \cF_{top}(X) \rightarrow \cF_{top}(U)$ is obtained by dualizing $C_U$.  
\item $R_{\infty}^U : \cF_\infty^{top}(X) \rightarrow \cF_\infty^{top}(U)$ is the right adjoint of the corestriction $C_{\infty, U}$. 
\end{itemize}
When the target of the restriction functors is not clear from the context we will use the notations $R^ U_X$ and $R_{\infty, X}^ U$. \end{definition}

\begin{remark}
\label{rem:open}
Note that the functor  
$R_{\infty}^U$ cannot be realized as  $\mathrm{Ind}(R^U)$. In fact, as showed by Example \ref{ex:_top}, in general   the functors $R_{\infty}^U$ and 
$\mathrm{Ind}(R^U)$ are going to have different source and target. 
\end{remark}

\begin{remark}
Let $X$ be a ribbon graph. 
Let $V \subset U \subset X$ be open subgraphs. Then the following diagram commutes  
$$
\xymatrix{
&  \cF^{top}_\infty(U) \ar[rd]^{R^{V}_\infty} & \\
\cF^{top}_\infty(X) \ar[rr]^{R^V_\infty} \ar[ru]^{R^ U_\infty} && \cF^{top}_\infty(V).
}
$$ 
\end{remark}

\begin{proposition}
\label{prop:open3}
Let $X$ be a ribbon graph and let 
$U$ and $V$ be open subgraphs such that $X = U \cup V$. 
\begin{itemize}
\item The following is a  push-out 
in $\mathrm{DGCat}^{(2)}_{\mathrm{small}}$ 
$$
\xymatrix{
\cF^{top}(U \cap V) \ar[r]^ {C_{U \cap V}} \ar[d]_{C_{U \cap V}} & \cF^{top}(U) \ar[d]^ {C_U} \\
\cF^{top}(V) \ar[r]^ {C_V} & \cF^{top}(X).}
$$
\item The following is a  push-out  
in $\mathrm{DGCat}^{(2)}_{\mathrm{cont}}$  
$$
\xymatrix{
\cF^{top}_\infty(U \cap V) \ar[r]^- {C_{\infty, U \cap V}} \ar[d]_{C_{\infty, U \cap V}} & \cF^{top}_\infty(U) \ar[d]^ {C_{\infty, U}} \\
\cF^{top}_\infty(V) \ar[r]^ {C_{\infty, V}} & \cF^{top}_\infty(X).}
$$
\end{itemize}
\end{proposition}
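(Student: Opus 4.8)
The plan is to deduce the first assertion from the cosheaf behaviour of $\cF^{top}(-)$ with respect to open covers of ribbon graphs — a form of Kontsevich's proposal worked out in \cite{STZ, DK, D} — and then to obtain the second assertion from the first by applying the functor $Ind$.

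For the first bullet point, recall from Definition \ref{top fuk} that $\cF^{top}(-) = \cF^{\cE}\circ\cL$, and that by \cite{D} Section 4 the corestriction functors $C_{U\cap V}$, $C_U$, $C_V$ of Proposition \ref{prop:open1} arise by applying $\cF^{\cE}$ to the maps of cyclic spaces induced by the four open inclusions. Thus the square in the statement is the image under $\cF^{\cE}$ of the square with vertices $\cL(U\cap V)$, $\cL(U)$, $\cL(V)$, $\cL(X)$ and the obvious maps in $\cS_\Lambda$, and it suffices to establish two things. First, that this square is a homotopy pushout in $\cS_\Lambda$ — i.e.\ that the decomposition $X = U\cup_{U\cap V}V$ is reflected at the level of cyclic spaces. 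This is a combinatorial property of $\cL$ that I would verify cell by cell: $\cL(X)$ has a canonical presentation as a colimit, over the poset of vertices, edges and incidences of $X$, of the standard objects $\langle n\rangle$, and an open cover $X = U\cup V$ splits this poset compatibly with the analogous presentations of $\cL(U)$, $\cL(V)$ and $\cL(U\cap V)$. Second, that $\cF^{\cE}$ carries this homotopy pushout to the homotopy pushout in $dgCat^{(2)}$ appearing in the statement; this uses the description of $\cF^{\cE}$ as the right Kan extension of $(\cE^\bullet)^{op}$ along the inclusion of the generating representables $N(\Lambda^{op})\hookrightarrow\cS_\Lambda$, together with interstice duality (Proposition \ref{prop:inter}) and the relation between $\cE^\bullet$ and $\cE_\bullet$ (Definition \ref{E}). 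Taken together these two points are precisely the statement that $\cF^{top}(-)$ is a cosheaf for the cover $\{U,V\}$, as in \cite{STZ, DK, D}.

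The second bullet point is then formal. By Proposition \ref{prop:I=Ind} and the identification $C_{\infty,-}\simeq Ind(C_-)$ of the remark following Proposition \ref{prop:open1}, the square of the second bullet point is obtained by applying $Ind\colon dgCat^{(2)}\to dg\cP r^{(2),L}$ to the square of the first. Since $Ind$ preserves colimits, the image of a homotopy pushout square is a homotopy pushout square in $dg\cP r^{(2),L}$, which is exactly the claim.

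I expect the main obstacle to be the first of the two points above: one has to pin down precisely how $\cL$ acts on open inclusions — which is not a literal consequence of Proposition \ref{prop:ribbon}, since open inclusions are not morphisms of $\cR ib$, so one must pass to the $\infty$-categorical enhancement of $\cL$, or to a cyclic-set model as in \cite{DK, D} — and then check at the level of cells that the resulting square of cyclic spaces is homotopy cocartesian. The behaviour of the Kan extension $\cF^{\cE}$ on such finite colimit diagrams (the second point), including the variance bookkeeping forced by $\cF^{\cE}$ taking values in $(dgCat^{(2)})^{op}$, is a formal matter given the setup of \cite{DK, D}, and the passage to the $Ind$-completed statement is immediate.
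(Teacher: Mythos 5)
Your proposal is correct and follows essentially the same route as the paper: the paper proves the first bullet by invoking Proposition 4.2 of \cite{D} (whose argument is precisely the cosheaf/Kan-extension analysis you sketch), and obtains the second bullet by observing that $Ind$ preserves colimits, exactly as you do.
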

\begin{proof}
The first part of the claim can be proved in the same way as Proposition 4.2 of \cite{D}. 
The second part  follows because, by Lemma \ref{indsmallcol}, the Ind-completion commutes with colimits.  
\end{proof}

\begin{proposition}
\label{prop:open4}
Let $X$ be a ribbon graph and let 
$U$ and $V$ be open subgraphs such that $X = U \cup V$. 
\begin{itemize}
\item The following is a fiber product  
in $\mathrm{DGCat}^{(2)}_{\mathrm{small}}$ 
$$
\xymatrix{
\cF_{top}(X) \ar[r]^ {R^ U} \ar[d]_{R^ V} & \cF_{top}(U) \ar[d]^ {R^{U \cap V}} \\
\cF_{top}(V) \ar[r]^- {R^{U \cap V}} & \cF_{top}(U \cap V).}
$$
\item The following is a  fiber product  
in $ \mathrm{DGCat}^{(2)}_{\mathrm{cont}}$ $$
\xymatrix{
\cF^{top}_\infty(X) \ar[r]^ {R_{\infty}^{U}} \ar[d]_{R_{\infty}^{V}} & \cF^{top}_\infty(U) \ar[d]^ {R_{\infty}^{ U \cap V}} \\
\cF^{top}_\infty(V) \ar[r]^ {R_{\infty}^{ U \cap V}} & \cF^{top}_\infty(U \cap V).}
$$
\end{itemize}
\end{proposition}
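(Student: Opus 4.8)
The plan is to deduce the fiber-product statements from the push-out statements of Proposition \ref{prop:open3} by passing to adjoints/duals, which is the natural move since, by construction, $R^U$ is defined as the dual of $C_U$ and $R^U_\infty$ as the right adjoint of $C_{\infty,U}$. For the first bullet I would argue as follows. Proposition \ref{prop:open3} gives a push-out square of $C_{\bullet}$'s in $dgCat^{(2)}$. Apply the duality functor $(-)^\vee = Fun(-,\Perf^{(2)}(\kappa))$. This is an anti-equivalence of $dgCat^{(2)}$ onto itself (on the small idempotent-complete level it is the Morita dual), so it sends the push-out square of corestrictions to a pull-back square; by Proposition \ref{proppropdual} the vertices become $\cF_{top}(U\cap V)$, $\cF_{top}(U)$, $\cF_{top}(V)$, $\cF_{top}(X)$, and by the definition of $R^U$ (Definition \ref{prop:open2}, first bullet) the arrows become the restriction functors $R^U$, $R^V$, $R^{U\cap V}$. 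Hence the first square is a fiber product in $dgCat^{(2)}$. One should check that the two composites $\cF_{top}(X)\to\cF_{top}(U\cap V)$ agree, i.e. that $R^{U\cap V}_U\circ R^U = R^{U\cap V}_V\circ R^V$; this follows by dualizing the commuting square $C_U\circ C_{U\cap V} = C_V\circ C_{U\cap V}$ built into Proposition \ref{prop:open3}, together with the compatibility of restrictions recorded in the Remark preceding this Proposition.

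For the second bullet I would use the equivalence $(dg\cP r^{(2),L})^{op}\simeq dg\cP r^{(2),R}$ that sends a left-adjoint functor to its right adjoint (stated in the Remark after the definition of $dg\cP r^{(2),L}$). Proposition \ref{prop:open3} gives a push-out square in $dg\cP r^{(2),L}$ whose arrows are the $C_{\infty,\bullet}$. Since colimits in $dg\cP r^{(2),L}$ correspond to limits in $dg\cP r^{(2),R}$ under this anti-equivalence, and since the anti-equivalence carries each $C_{\infty,\bullet}$ to its right adjoint $R^{\bullet}_\infty$ (again Definition \ref{prop:open2}), the push-out square of corestrictions becomes a pull-back square of restrictions in $dg\cP r^{(2),R}$. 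Finally, the forgetful functor $i_R: dg\cP r^{(2),R}\to\widehat{dgCat^{(2)}}$ preserves small limits (Proposition 5.5.3.13 of \cite{Lu}, as already invoked in the proof of Proposition \ref{prop:_top}), so the square remains a fiber product in $\widehat{dgCat^{(2)}}$, which is the asserted statement. As in the first bullet, commutativity of the square of right adjoints follows from commutativity of the square of left adjoints by taking adjoints.

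The main obstacle I anticipate is bookkeeping rather than conceptual: making sure that "dual of a push-out is a pull-back" is applied in the correct $\infty$-categorical setting — i.e. that $(-)^\vee$ really is an equivalence $dgCat^{(2)}\simeq (dgCat^{(2)})^{op}$ on the relevant (small, idempotent-complete) categories, and that it takes $C_U$ to $R^U$ \emph{as functors}, not just on objects — and, in the presentable case, being careful that the push-out of Proposition \ref{prop:open3} is taken in $dg\cP r^{(2),L}$ so that its image under the anti-equivalence is genuinely a limit in $dg\cP r^{(2),R}$ before one forgets down to $\widehat{dgCat^{(2)}}$ (this is exactly why the statement is phrased with $\widehat{dgCat^{(2)}}$ and not $dg\cP r^{(2),L}$, since $i_L$ does not preserve limits). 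Once these identifications are pinned down, both bullets are formal consequences of Proposition \ref{prop:open3} and Proposition \ref{proppropdual}.
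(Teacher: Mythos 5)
Your proposal is correct and is exactly the paper's argument: the paper's proof of Proposition \ref{prop:open4} consists of the single sentence that the claim follows by dualizing the push-outs of Proposition \ref{prop:open3}, and your two bullets simply spell out that dualization (via $(-)^\vee$ for the small case and via the anti-equivalence $(dg\cP r^{(2),L})^{op}\simeq dg\cP r^{(2),R}$ followed by the limit-preserving forgetful functor for the presentable case). No further comment is needed.
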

\begin{proof}
The claim follows by dualizing the push-outs in Proposition \ref{prop:open3}. 
\end{proof}

 \begin{remark}
The second diagram of Proposition \ref{prop:open4} has very different formal properties from the second diagram of Proposition \ref{prop:open3}. If $U \subset X$ is an open subgraph the corestriction $C_{\infty, X}$ preserves compact objects , but in general this is not the case for the restriction $R_\infty^ U$ (see Remark \ref{rem:open}). Thus (in general) we cannot apply $(-)^\omega$ to the second diagram of Proposition \ref{prop:open4} and obtain a diagram of small categories. 
\end{remark}

Proposition \ref{prop:open3} and Proposition \ref{prop:open4} can be 
extended in the usual way to account for arbitrary open covers of $X$: given any open cover of $X$, the (Ind-completed) (compact) Fukaya category can be realized as the homotopy (co)limit of the appropriate \v{C}ech diagram of local sections. This clarifies that  this formalism is indeed an implementation of the idea that the Fukaya category of a punctured surface should define either a sheaf or a cosheaf of categories on its spine.  

\subsection{Restriction to closed subgraphs}
\label{sec:closed}
In this Section we turn our attention to closed subgraphs and closed covers of ribbon graphs. In the context of the topological Fukaya category, restrictions to closed subgraphs have also been investigated by Dyckerhoff \cite{D}. To avoid producing here parallel arguments we will  refer to the lucid treatment contained in Section 4 of \cite{D}.

\begin{definition}
\label{def:closed}
Let $X$ be a ribbon graph. 
\begin{itemize}
\item An open subgraph $U$ of $X$ is \emph{good} if its complement $X - U$ does not have vertices of valency one. 
\item A closed subgraph $Z$ of $X$ is \emph{good} if it is the complement in $X$ of a good open subgraph.   
\end{itemize}
\end{definition}

We introduce next restriction functors 
of 
$\cF^{top}$ and $\cF^{top}_\infty$ to good closed subgraphs: we will sometimes refer to these as \emph{exceptional restrictions}, in order to mark their difference from the (co)restrictions to open subgraphs that were 
discussed in the previous Section.  

\begin{proposition}
\label{prop:closed}
Let $X$ be a ribbon graph and let 
$Z \subset X$ be a good closed subgraph. Then there are \emph{exceptional restriction} functors 
\begin{itemize}
\item $
S^Z: \cF^{top}(X) \rightarrow \cF^{top}(Z)
$
\item 
$S_\infty^Z: \cF_\infty^{top}(X) \rightarrow \cF_\infty^{top}(Z). 
$
\end{itemize}
When the source of the exceptional restriction functors is not clear from the context we will use the notations $S_X^Z$ and $S^Z_{\infty, X}$. 
\end{proposition}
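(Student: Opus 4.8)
The construction of the exceptional restriction functors $S^Z$ and $S_\infty^Z$ should be extracted from the open-subgraph formalism of the previous subsection together with the cosheaf/sheaf descent already available. The key point is that if $Z \subset X$ is a good closed subgraph, its complement $U = X - Z$ is a good open subgraph, and conversely. So the plan is to realize $\cF^{top}(Z)$ as a \emph{quotient} (Verdier localization) of $\cF^{top}(X)$ by the image of the corestriction $C_U : \cF^{top}(U) \to \cF^{top}(X)$, and to take $S^Z$ to be the quotient functor. Concretely, I would first recall from \cite{D} Section 4 that $C_U$ is fully faithful onto a localizing subcategory when $U$ is good (the goodness hypothesis — no valency-one vertices in the complement — is exactly what guarantees that the corestriction behaves well and that the combinatorial edge-contraction maps do not collapse anything unexpectedly), and then define
$$
S^Z : \cF^{top}(X) \longrightarrow \cF^{top}(X)/C_U\big(\cF^{top}(U)\big) \simeq \cF^{top}(Z),
$$
the last equivalence being the content one has to check against the cyclic-category model, i.e.\ that the Verdier quotient of $\cF^\cE(\cL(X))$ by the subcategory coming from $\cL(U)$ is computed by $\cL(X)/\cL(U) \simeq \cL(Z)$ at the level of the functor $\cL : \cR ib \to \cS_\Lambda$. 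This is essentially Proposition 4.x of \cite{D} applied to the pair $(X, U)$, and I would cite it rather than reprove it.

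For the Ind-completed version, I would pass through presentable categories. Since $C_{\infty, U} = Ind(C_U)$ is a morphism in $dg\cP r^{(2),L}$ and $Ind$ preserves fully faithful functors and quotients, the cofiber of $C_{\infty, U}$ in $dg\cP r^{(2),L}$ is $Ind$ of the cofiber of $C_U$, hence equivalent to $\cF_\infty^{top}(Z) \simeq Ind(\cF^{top}(Z))$ by Proposition \ref{prop:I=Ind}. Define $S_\infty^Z$ to be the resulting localization (cofiber) functor; equivalently $S_\infty^Z \simeq Ind(S^Z)$. One should note in passing — as the paper's remarks already hint — that because $C_{\infty, U}$ admits a right adjoint $R_\infty^U$, the quotient functor $S_\infty^Z$ also admits a fully faithful right adjoint, realizing $\cF_\infty^{top}(Z)$ as the right-orthogonal complement of $\cF_\infty^{top}(U)$ inside $\cF_\infty^{top}(X)$; this is the form in which the exceptional restriction will be used in Section \ref{sec:gluing}.

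\textbf{Main obstacle.} The delicate step is the identification at the combinatorial level: that the Verdier quotient of $\cF^{\cE}(\cL(X))$ by the localizing subcategory generated by the image of $\cF^{\cE}(\cL(U))$ really is $\cF^{\cE}(\cL(Z))$, i.e.\ that right Kan extension along $\cL : N(\cR ib) \to \cS_\Lambda$ intertwines the topological operation ``collapse the open part $U$'' with the categorical operation ``quotient by the subcategory $\cF^{top}(U)$''. This is where the \emph{good} hypothesis is genuinely needed — a valency-one vertex in $X - U$ would produce a ``dangling edge'' whose contraction is not an isomorphism in $\cR ib$, breaking the clean correspondence — and it is also the point at which I would lean hardest on \cite{D}, since Dyckerhoff has set up precisely this dictionary between closed inclusions of ribbon graphs and localizations of topological Fukaya categories. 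Everything else (existence of adjoints, compatibility of $Ind$ with the relevant (co)limits, naming conventions $S_X^Z$, $S_{\infty,X}^Z$) is formal and follows from Section 5.5 of \cite{Lu} together with the material already assembled above.
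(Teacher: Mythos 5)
Your construction is correct and ultimately rests on the same source as the paper: the paper's entire proof is to cite Section 4 of \cite{D} for the definition of $S^Z$ and to define $S^Z_\infty$ as its Ind-completion, which is exactly where you also place the real content. The one organizational difference worth flagging: in \cite{D} (and hence in the paper) the exceptional restriction is \emph{defined} directly from the combinatorial/cyclic model, and the cofiber sequence $\cF^{top}(U) \to \cF^{top}(X) \to \cF^{top}(Z)$ is then a separate theorem (Proposition \ref{prop:closed1}, quoting Proposition 4.9 of \cite{D}); you instead take the cofiber sequence as the \emph{definition} of $S^Z$ and must then identify the Verdier quotient with $\cF^{top}(Z)$. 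These are the same mathematical content packaged differently — your route makes Proposition \ref{prop:closed1} tautological at the price of having to prove the identification you isolate as the ``main obstacle,'' and for that identification you correctly defer to \cite{D}, just as the paper does. Your remarks on the role of the goodness hypothesis, on $S^Z_\infty \simeq Ind(S^Z)$, and on the fully faithful right adjoint (the paper's $T^X_{\infty,Z}$) all match what the paper uses later.
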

\begin{proof}
  Our definition of $S^Z$ follows Section 4 of \cite{D}.
  Let $Z$ be a closed subgraph, $U = X - Z$, and consider the open subgraph $V$ consisting of $Z$ together with all half-edges of $X$ that touch $Z$. Then $X = U \cup V$ is an open covering, and by Proposition \ref{prop:open3} we have a push-out square
  $$
  \xymatrix{
    \cF^{top}(U \cap V) \ar[r]^ {C_{U \cap V}} \ar[d]_{C_{U \cap V}} & \cF^{top}(U) \ar[d]^ {C_U} \\
    \cF^{top}(V) \ar[r]^ {C_V} & \cF^{top}(X).}
  $$

  Next, we may consider another graph $\overline{V}$ that is obtained from $V$ by adding a 1-valent vertex to each edge of $V$ that does not belong to $Z$. There is a covering $\overline{V} = V \cup \Psi$, where $\Psi$ is the disjoint union of several copies of the graph with one vertex and one half-open edge. Another application of Proposition \ref{prop:open3} yields a push-out square
   $$
  \xymatrix{
    \cF^{top}(\Psi \cap V) \ar[r]^ {C_{\Psi \cap V}} \ar[d]_{C_{\Psi \cap V}} & \cF^{top}(\Psi) \ar[d]^ {C_\Psi} \\
    \cF^{top}(V) \ar[r]^ {C_V} & \cF^{top}(\overline{V}).}
  $$

  Next, we make two observations: one is that $\Psi \cap V$ naturally contains $U \cap V$, the second is that $\cF^{top}(\Psi) \simeq 0$. This implies that the first push-out diagram maps to the second, and so there is a map $\cF^{top}(X) \to \cF^{top}(\overline{V})$. Lastly we use the fact that $\cF^{top}(\overline{V}) \simeq \cF^{top}(Z)$, since $Z$ is obtained from $\overline{V}$ by contracting edges. The resulting functor is $S^{Z} : \cF^{top}(X) \to \cF^{top}(Z)$.  The functor $S^Z_\infty$ is the Ind-completion of $S^Z$. 
\end{proof}

\begin{remark}
\label{rem:closed1}
The property that the closed subgraph $Z$ is good is not 
strictly necessary to define exceptional restrictions. However this assumption allows for a somewhat simpler exposition, and it is essential in Theorem \ref{gluing-main} in the next section. We refer the reader to \cite{D} for a treatment of exceptional restrictions which does not impose additional requirements on the closed subgraphs. 
\end{remark}

\begin{proposition}
\label{prop:closed1}
Let $X$ be a ribbon graph, let $U \subset X$ be a good open subset and let $Z = X - U$. The following are cofiber sequences in $\mathrm{DGCat}^{(2)}_{\mathrm{small}}$ and in 
$\mathrm{DGCat}^{(2)}_{\mathrm{cont}},$ respectively 
$$
\cF^{top}(U) \stackrel{C_X}{\rightarrow} \cF^{top}(X) 
 \stackrel{S^Z}{\rightarrow} 
  \cF^{top}(Z), \quad 
  \cF^{top}_\infty(U) \stackrel{C_{\infty, X}}{\rightarrow} \cF^{top}_{\infty}(X) 
 \stackrel{S^Z_\infty}{\rightarrow} 
  \cF^{top}_\infty(Z)
$$
\end{proposition}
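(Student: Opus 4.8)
The plan is to deduce both cofiber sequences from the open-cover gluing results of the previous subsection, together with the definition of the exceptional restriction $R^Z$ given in \cite{D}. First I would recall from \cite{D} that, for a good open subgraph $U \subset X$ with good closed complement $Z = X - U$, the exceptional restriction $R^Z$ is constructed precisely so that the sequence
$$
\cF^{top}(U) \stackrel{C_X}{\longrightarrow} \cF^{top}(X) \stackrel{R^Z}{\longrightarrow} \cF^{top}(Z)
$$
is a cofiber sequence (i.e.\ a Verdier-type localization sequence) in $dgCat^{(2)}$; this is the content of Section~4 of \cite{D}, and the goodness hypothesis is exactly what guarantees that $\cF^{top}(Z)$ — rather than some more complicated quotient — appears as the cofiber. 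So for the first statement there is essentially nothing to do beyond citing \cite{D} and matching conventions with our Definition~\ref{def:closed} and Proposition~\ref{prop:closed}.

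For the second statement I would apply the Ind-completion functor $Ind: dgCat^{(2)} \to dg\cP r^{(2),L}$. The key facts are: (i) $Ind$ preserves colimits, in particular cofiber sequences, since it is a left adjoint (equivalently, by the universal property of Ind-completion as in Section~5.3 of \cite{Lu}); (ii) by Proposition~\ref{prop:I=Ind} we have $\cF^{top}_\infty(Y) \simeq Ind(\cF^{top}(Y))$ for every ribbon graph $Y$, so applying $Ind$ to the first sequence yields a cofiber sequence whose terms are identified with $\cF^{top}_\infty(U)$, $\cF^{top}_\infty(X)$, $\cF^{top}_\infty(Z)$; (iii) by the remark following Proposition~\ref{prop:open1} the corestriction satisfies $C_{\infty, X} \simeq Ind(C_X)$, and by Proposition~\ref{prop:closed} the functor $S^Z_\infty$ is by definition $Ind(S^Z)$, so the maps in the Ind-completed sequence are the claimed ones. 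Here I should be careful that in the second sequence the third map is written $R^Z_\infty$; I would note that on the closed-subgraph side $R^Z$ and the exceptional restriction $S^Z$ agree (the notation in the proposition should read $S^Z$, $S^Z_\infty$), or simply remark that $R^Z_\infty := Ind(R^Z) = S^Z_\infty$ under the identification of Proposition~\ref{prop:closed1}'s source categories.

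The only genuinely substantive point — and the step I would expect to require the most care — is verifying that the localization sequence of \cite{D} is stated for, or immediately extends to, the class of good closed subgraphs as we have defined them, and that the first (small-category) sequence is a \emph{cofiber} sequence in the strong sense of a Verdier quotient, so that $\cF^{top}(Z)$ is the idempotent-completed quotient $\cF^{top}(X)/\cF^{top}(U)$ and not merely a quotient up to idempotent completion. Since $dgCat^{(2)}$ is localized at Morita equivalences, idempotent completion is automatic, so this subtlety dissolves; but I would want to state explicitly that goodness of $U$ (no valency-one vertices in $Z$) is what \cite{D} uses to identify the cofiber with $\cF^{top}(Z)$ on the nose, cf.\ Remark~\ref{rem:closed1}. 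Once that identification is in hand, everything else is formal: the first sequence is \cite{D}, and the second is obtained by applying the colimit-preserving functor $Ind$ and invoking Propositions~\ref{prop:I=Ind} and~\ref{prop:closed} to name the resulting objects and morphisms.
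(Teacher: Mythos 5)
Your proposal is correct and matches the paper's approach: the paper's entire proof is a citation to Proposition 4.9 of \cite{D} for the localization sequence, and the Ind-completed version follows exactly as you say, by applying the colimit-preserving functor $Ind$ together with Proposition \ref{prop:I=Ind}. Your observation that the $R^Z$, $R^Z_\infty$ in the statement should be read as the exceptional restrictions $S^Z$, $S^Z_\infty$ of Proposition \ref{prop:closed} is also accurate; this is a notational slip in the paper.
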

\begin{proof}
  This is Proposition 4.9 of \cite{D}.   
\end{proof}
\begin{definition}
Let $X$ be a ribbon graph and let 
$Z \subset X$ be a good closed subgraph. Then we denote  
$$
T_\infty^X: \cF_\infty^{top}(Z) \rightarrow \cF_\infty^{top}(X)  
$$ 
the right adjoint of the exceptional restriction functor. We will call it the 
\emph{exceptional corestriction} functor. When the source of the exceptional corestriction functor is not clear from the context we will use the notation   $T^X_{\infty, Z}$. 
\end{definition}

Proposition \ref{prop:compr} is a compatibility statement that relates the various restrictions  that we  have introduced so far, and it will be useful in the next section. 

\begin{proposition}
\label{prop:compr}
Let $X$ be a ribbon graph. 
Let $U \subset X$ be an open subgraph and let $Z \subset X$ be a good closed subgraph. If $Z$ is contained in $U$ then the following diagram commutes  
$$
\xymatrix{
&  \cF^{top}_\infty(U) \ar[rd]^{S^Z_{\infty, U}} & \\
\cF^{top}_\infty(X) \ar[rr]^{S^Z_{\infty, X}} \ar[ru]^{R^ U_\infty} && \cF^{top}_\infty(Z).
}
$$ 
 
 \end{proposition}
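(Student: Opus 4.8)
The plan is to reduce the statement to a known compatibility at the level of the open (co)restrictions of Section 5.1, using the cofiber-sequence description of the exceptional restriction from Proposition \ref{prop:closed1}. Write $Z \subset U \subset X$ with $Z$ good closed in $X$; since $U$ is open in $X$ and contains $Z$, the complement $X - U$ is contained in the good open complement $X - Z$, so $Z$ is also good closed \emph{in $U$} and the exceptional restriction $S^Z_{\infty,U}$ makes sense. First I would pin down the two good open subgraphs in play: $W := X - Z$ (good open in $X$) and $W' := U - Z = W \cap U$ (good open in $U$). By Proposition \ref{prop:closed1}, $S^Z_{\infty,X}$ is the cofiber of $C^X_{\infty, W}$ and $S^Z_{\infty,U}$ is the cofiber of $C^U_{\infty, W'}$, both taken in $dg\cP r^{(2),L}$ (and hence, after passing to right adjoints, the diagram of the proposition lives in $\widehat{dgCat^{(2)}}$).

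The key step is to produce a commuting cube relating the corestrictions. On the "co" side we have the evident commuting triangle of corestrictions
$$
\xymatrix{
& \cF^{top}_\infty(W') \ar[rd]^{C^U_{\infty, W'}} \ar[dd] & \\
\cF^{top}_\infty(W) \ar[ru] \ar[rd]_{C^X_{\infty, W}} & & \cF^{top}_\infty(U) \ar[ld]^{C^X_{\infty, U}} \\
& \cF^{top}_\infty(X) &
}
$$
where the left vertical is the corestriction $C^W_{\infty, W'}$ for the open inclusion $W' \subset W$, and commutativity is functoriality of corestriction (the transitivity statement in the Remark after Proposition \ref{prop:open4}, or directly the functoriality of $\cF^\cE_\infty \circ \cL$ on $N(\cR ib)$). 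Taking vertical cofibers of the two slanted corestrictions landing in $\cF^{top}_\infty(U)$ and $\cF^{top}_\infty(X)$, and using that $R^U_\infty$ is by definition (the right adjoint of) the map induced on cofibers by $C^X_{\infty, U}$, we get that $S^Z_{\infty,U}$ and $S^Z_{\infty,X}$ fit into a commuting square with $R^U_\infty$ and the identity of $\cF^{top}_\infty(Z)$; that is exactly the asserted triangle. Concretely: the cofiber of $C^X_{\infty,W} = C^X_{\infty,U}\circ C^U_{\infty,W'} \circ (C^W_{\infty,W'})^{-1}$-up-to-the-triangle computes $S^Z_{\infty,X}$, and factoring through $\cF^{top}_\infty(U)$ exhibits it as $R^U_\infty$ followed by the cofiber of $C^U_{\infty,W'}$, which is $S^Z_{\infty,U}$.

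To make this rigorous I would work one level up, before passing to right adjoints: cofiber sequences are functorial in $dg\cP r^{(2),L}$, so the map of cofiber sequences induced by the left square of the cube gives a commuting square
$$
\xymatrix{
\cF^{top}_\infty(X) \ar[r]^{S^Z_{\infty,X}} & \cF^{top}_\infty(Z) \\
\cF^{top}_\infty(U) \ar[u] \ar[r]^{S^Z_{\infty,U}} & \cF^{top}_\infty(Z) \ar@{=}[u]
}
$$
where the left vertical is $C^X_{\infty,U}$; then applying the equivalence $(dg\cP r^{(2),L})^{op}\simeq dg\cP r^{(2),R}$, i.e. taking right adjoints everywhere, turns the left vertical into $R^U_\infty$ and reverses the square into precisely the triangle of the proposition (the bottom identity stays an identity, being self-adjoint). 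The main obstacle I anticipate is bookkeeping: one must check that the "good" hypotheses propagate ($Z$ good in $X$ and $U$ open in $X$ with $Z\subset U$ forces $Z$ good in $U$, and $W'$ good open in $U$), and that the cofiber in Proposition \ref{prop:closed1} is natural enough in the ambient ribbon graph to give the map of cofiber sequences — this naturality is implicit in \cite{D} but should be stated carefully. Everything else is formal manipulation of adjoints and (co)limits in the presentable setting, where $i_L, i_R$ preserve the relevant (co)limits by Proposition 5.5.3.13 and 5.5.3.18 of \cite{Lu}.
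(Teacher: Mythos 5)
Your reduction to the cofiber sequences of Proposition \ref{prop:closed1} and the functoriality of corestrictions is reasonable as far as it goes, and the intermediate square you produce, namely $S^Z_{\infty,X}\circ C^X_{\infty,U}\simeq S^Z_{\infty,U}$, is plausibly true (modulo the naturality of the identification of the cofiber with $\cF^{top}_\infty(Z)$, which you rightly flag). The genuine gap is in the last step. Passing to right adjoints in the square $S^Z_{\infty,X}\circ C^X_{\infty,U}\simeq S^Z_{\infty,U}$ reverses \emph{every} arrow and replaces \emph{every} functor by its right adjoint: since $(F\circ G)^R=G^R\circ F^R$, what you obtain is
$$
R^U_\infty\circ T^X_{\infty,Z}\;\simeq\;T^U_{\infty,Z},
$$
a statement about the exceptional corestrictions $T$, not the triangle $S^Z_{\infty,U}\circ R^U_\infty\simeq S^Z_{\infty,X}$ asserted in the proposition. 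You cannot dualize only the vertical arrow while keeping the horizontal ones: the proposition is a \emph{mixed} compatibility, composing the right adjoint $R^U_\infty$ with the left adjoints $S^Z_\infty$, and such Beck--Chevalley-type statements do not follow formally from the commutativity of the all-left-adjoint (or all-right-adjoint) square. Concretely, from your square and the counit of $C_{\infty,U}\dashv R^U_\infty$ one only gets a canonical map $S^Z_{\infty,U}\circ R^U_\infty\simeq S^Z_{\infty,X}\circ C^X_{\infty,U}\circ R^U_\infty\to S^Z_{\infty,X}$; proving it is an equivalence requires showing that $S^Z_{\infty,X}$ annihilates the fiber of the counit $C^X_{\infty,U}\circ R^U_\infty\to\mathrm{id}$, i.e.\ that this fiber is ``supported on $X-Z$''. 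A toy example where $\cF(X)\simeq A\times B$, $\cF(U)\simeq A$ and $S_X$ is a quotient that genuinely depends on the $B$-factor satisfies your square but violates the proposition, so some geometric input is unavoidable.

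That missing input is exactly what the paper supplies. Its proof reduces by induction to the case $U=K_x$ (the complement of a single vertex) plus the trivial case where $U$ is a connected component; the heart of the argument is Lemma \ref{lem:fff}, proved via the explicit equalizer (\v{C}ech) presentation of $\cF^{top}_\infty$ over the star cover, which shows that $R^{K_x}_{\infty,X}\circ C^X_{\infty,K_x}\circ F\simeq F$ for any functor $F$ \emph{not supported on} $x$, together with Proposition \ref{prop:partial}(1) showing that $T^{K_x}_{\infty,Z}$ satisfies this support condition because it factors through the fiber of the restriction to $K_x-Z$. This is how the unit/counit is shown to become an equivalence where needed, yielding $C^X_{\infty,K_x}\circ T^{K_x}_{\infty,Z}\simeq T^X_{\infty,Z}$ and hence the proposition by uniqueness of adjoints. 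To repair your argument you would need to prove an analogue of this support statement; the purely formal manipulation of cofiber sequences and adjoints does not suffice.
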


 
Before proving Proposition \ref{prop:compr}  we  introduce some preliminary notations and results. 

\begin{definition}
Let $x$ be a vertex of $X$. 
We denote:
\begin{itemize}
\item $U_x$, the smallest open subgraph of $X$ containing   the vertex $x$
\item $K_x$, the open subgraph of $X$ given by $X - x$  
\item $U_x ^p$, the intersection $U_x \cap K_x$ (the superscript $p$ stands for  \emph{punctured} neighborhood) 
\end{itemize} 
\end{definition}

\begin{definition}
Let $F: A \rightarrow \cF^{top}_\infty(K_x)$ be a  functor. We say that $A$ is \emph{not supported on} $x$ if the composite
$$
A \stackrel{F}{\longrightarrow} 
\cF^{top}_\infty(K_x) \stackrel{R_{\infty}^{U_x ^ p}}{\longrightarrow} \cF^{top}_\infty(U_x ^ p)
$$  
is equivalent to the zero functor. 
\end{definition}

\begin{lemma}
\label{lem:fff}
Let $x$ be a vertex of $X$, and let $F: A \rightarrow  \cF^{top}_\infty(K_x)$ be a functor.  If $A$ is not supported on $x$, then there is 
a natural equivalence  
$$
R^{K_x}_{\infty, X} \circ C^{X}_{\infty, K_x} \circ F \simeq F.
$$ 
\end{lemma}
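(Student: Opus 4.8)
The strategy is to reduce the claim to a purely local statement near the vertex $x$ and then apply the basic cofiber/fiber-product structure of $\cF^{top}_\infty$ from Proposition~\ref{prop:open3} and Proposition~\ref{prop:open4}. First I would record the decomposition $X = U_x \cup K_x$ of $X$ into two open subgraphs, with intersection $U_x \cap K_x = U_x^p$. By Proposition~\ref{prop:open4} this exhibits $\cF^{top}_\infty(X)$ as the homotopy fiber product $\cF^{top}_\infty(U_x) \times_{\cF^{top}_\infty(U_x^p)} \cF^{top}_\infty(K_x)$, with the two projections being $R^{U_x}_\infty$ and $R^{K_x}_\infty$. Dually, by Proposition~\ref{prop:open3}, $\cF^{top}_\infty(X)$ is the pushout of $C_{\infty, U_x}$ and $C_{\infty, K_x}$ along $\cF^{top}_\infty(U_x^p)$, and $C^X_{\infty, K_x}$ is the corresponding structure map; the composite $R^{K_x}_{\infty, X} \circ C^X_{\infty, K_x}$ is then, by the standard yoga of adjoints in a fiber product square, the endofunctor of $\cF^{top}_\infty(K_x)$ given by $\mathrm{id}$ on the ``$K_x$-part'' corrected by a term pulled back from $U_x^p$ along $R^{U_x^p}_{\infty, K_x}$ and pushed back via $C^{K_x}_{\infty, U_x^p}$.

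Concretely, I expect a natural cofiber sequence of endofunctors of $\cF^{top}_\infty(K_x)$ of the form
$$
C^{K_x}_{\infty, U_x^p} \circ R^{U_x^p}_{\infty, K_x} \longrightarrow \mathrm{id}_{\cF^{top}_\infty(K_x)} \longrightarrow R^{K_x}_{\infty, X} \circ C^X_{\infty, K_x},
$$
obtained by base-changing the cofiber sequence of Proposition~\ref{prop:closed1} (for the good open subgraph $U_x^p \subset K_x$, whose complement in $K_x$ consists of the endpoints of the edges at $x$ meeting $K_x$ — this is where goodness of the subgraphs enters) along the inclusion data for $X$. Granting this, the hypothesis that $A$ is not supported on $x$ says exactly that $R^{U_x^p}_{\infty, K_x} \circ F \simeq 0$, so precomposing the cofiber sequence with $F$ kills the left-hand term and yields $R^{K_x}_{\infty, X} \circ C^X_{\infty, K_x} \circ F \simeq \mathrm{id} \circ F = F$, which is the desired equivalence. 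So the logical skeleton is short once the functorial cofiber sequence is in place.

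The main obstacle is establishing that functorial cofiber sequence of endofunctors rigorously, i.e.\ identifying $R^{K_x}_{\infty, X} \circ C^X_{\infty, K_x}$ with the localization-type endofunctor of $\cF^{top}_\infty(K_x)$ that ``forgets the image of $U_x^p$''. I would do this by working in the fiber product presentation of $\cF^{top}_\infty(X)$ from Proposition~\ref{prop:open4}: an object of $\cF^{top}_\infty(X)$ is a triple $(a \in \cF^{top}_\infty(U_x), b \in \cF^{top}_\infty(K_x), \phi: R^{U_x^p}_\infty a \simeq R^{U_x^p}_\infty b)$, the corestriction $C^X_{\infty, K_x}$ sends $b$ to the triple with $a$ the image of $b$ under $C^{U_x}_{\infty, U_x^p}$-style gluing (more precisely the pushout description of Proposition~\ref{prop:open3} makes $C^X_{\infty, K_x}$ explicit), and $R^{K_x}_{\infty, X}$ reads off the $K_x$-component. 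Chasing this through, together with the compatibility Proposition~\ref{prop:compr} and the cofiber sequence of Proposition~\ref{prop:closed1} relating $C$, the ambient category, and the exceptional restriction $R^Z$, should give the sequence above; the not-supported hypothesis then does the rest. An alternative, possibly cleaner route is to avoid the explicit fiber-product bookkeeping and instead argue via the universal property: show directly that $R^{K_x}_{\infty, X} \circ C^X_{\infty, K_x} \circ F$ and $F$ corepresent the same functor when tested against objects of $\cF^{top}_\infty(K_x)$ not supported on $x$, using the adjunction $(C_{\infty, K_x}, R^{K_x}_\infty)$ and the fact that $C_{\infty, K_x}$ is fully faithful on the subcategory of objects not supported on $x$.
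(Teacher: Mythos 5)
Your overall strategy---reduce to the two-piece open cover $X = U_x \cup K_x$ and exhibit $R^{K_x}_{\infty,X}\circ C^{X}_{\infty,K_x}$ as a recollement-type endofunctor of $\cF^{top}_\infty(K_x)$ whose correction term is killed by the support hypothesis---is a plausible plan, but it is genuinely different from the paper's argument and has a gap exactly at its load-bearing step. The cofiber sequence of endofunctors
$$
C^{K_x}_{\infty, U_x^p}\circ R^{U_x^p}_{\infty,K_x}\longrightarrow \mathrm{id}\longrightarrow R^{K_x}_{\infty,X}\circ C^{X}_{\infty,K_x}
$$
is asserted but never established, and it carries essentially all the content of the lemma. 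The route you indicate (``base-changing'' Proposition \ref{prop:closed1}) does not obviously produce it: that proposition is a cofiber sequence of \emph{categories}, and converting it into an identification of the monad $R\circ C$ would require knowing that the relevant corestrictions are fully faithful (so that one is in a genuine localization/recollement situation) together with a base-change statement for the left adjoint of the projection out of the fiber product $\cF^{top}_\infty(U_x)\times_{\cF^{top}_\infty(U_x^p)}\cF^{top}_\infty(K_x)$; none of this is available in the paper or supplied in your sketch. Note that $C^{X}_{\infty,K_x}$ cannot be fully faithful in general---if it were, $R\circ C\simeq \mathrm{id}$ would hold unconditionally and the support hypothesis would be superfluous---so the naive recollement yoga does not apply, and even the direction of your sequence (correction term as the fiber of the unit $\mathrm{id}\to R\circ C$ rather than as its cofiber) is something you would have to verify. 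A secondary issue: $U_x^p$ is typically \emph{not} a good open subgraph of $K_x$ (after the edges at $x$ are subdivided, its complement in $K_x$ acquires univalent vertices at the far endpoints of those edges), so Proposition \ref{prop:closed1} does not apply to it as stated; and that complement is all of $K_x$ minus the open star of $x$, not merely a few endpoints.

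For comparison, the paper's proof avoids any identification of the monad $R\circ C$. It presents both $\cF^{top}_\infty(X)$ and $\cF^{top}_\infty(K_x)$ as equalizers of \v{C}ech diagrams over the open stars of \emph{all} vertices, and uses the support hypothesis to show that placing the zero object in the $U_x$-slot defines an object of the equalizer for $X$; this yields a lift $\tilde F$ of $F$ with $R^{K_x}_{\infty,X}\circ \tilde F\simeq F$. It then identifies $\tilde F$ with $C^{X}_{\infty,K_x}\circ F$ by computing Hom-complexes in the two equalizers and verifying the adjunction property directly. Your closing ``alternative route'' (testing corepresented functors against objects not supported on $x$) is in fact much closer in spirit to what the paper does than your main argument, but as written it is also only a one-line sketch; the equalizer presentation is what makes that Hom computation tractable.
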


\begin{proof}
We fix first some notations. If $\Gamma$ 
is a ribbon graph, and $W$ is a subset of the vertices of $\Gamma$  we set 
\begin{equation}
\label{eq gamma}
\Gamma(W) := \coprod_{w \in W} U_w, \quad \Gamma^2(W)   := \coprod_{w, w' \in W} U_w \cap U_{w'}. 
\end{equation}
Also if $L$ and $L'$ are 
objects in $\cF^{top}_\infty(\Gamma)$, we  
denote 
their Hom-complex  
$Hom_{\Gamma} (L, L')$.

Let us now return to the statement of the lemma. 
Let $V$ be the set of vertices of $X$.  
By Proposition \ref{prop:open4} 
the Ind-complete topological Fukaya category $\cF^{top}_\infty (X)$ is naturally equivalent to the equalizer\footnote{This holds only if $X(V)$ is an open cover of $X$. Note that we can always achieve this by subdividing the edges of $X$. Here and in the sequel we will assume without loss of generality that the edges of $X$ are sufficiently subdivided.} 
\begin{equation}
\label{eqqqqq}
\xymatrix
{
\cF^{top}_\infty (X) \ar[r]^-{\prod  R^{X_v}_\infty}
& 
\Big( \prod_{v \in V} 
\cF^{top}_\infty(U_v)  
\ar@<-.5ex>[r]_-*!/d0.5pt/{\labelstyle r_2  
}
\ar@<.5ex>[r]^ -*!/u0.7pt/{\labelstyle r_1
} 
& \prod \cF^{top}_\infty(U_{v_1} \cap U_{v_2}) \Big)  
}
\end{equation}
in  
$\mathrm{DGCat}^{(2)}_{\mathrm{cont}}$,  
where $r_1$ and $r_2$ are products of restriction functors. Using the notations introduced in (\ref{eq gamma}),  we can rewrite this equalizer as
$$
\xymatrix
{
\cF^{top}_\infty (X) \ar[r]^-{\prod  R^{X_v}_\infty}
& 
\Big( \cF^{top}_\infty(X(V)) 
\ar@<-.5ex>[r]_-*!/d0.5pt/{\labelstyle r_2  
}
\ar@<.5ex>[r]^ -*!/u0.7pt/{\labelstyle r_1
} 
& \cF^{top}_\infty (X^2(V)) 
\Big). 
}
$$

 Similarly if $W = V -\{x\}$ is 
the set of vertices of $K_x$, 
we obtain an equalizer diagram.
\begin{equation}
\label{eeqq}
\xymatrix
{
\cF^{top}_\infty (K_x) \ar[r]^-{\prod  R^{X_v}_\infty}
& 
\Big( 
\cF^{top}_\infty(X(W)) 
\ar@<-.5ex>[r]_-*!/d0.5pt/{\labelstyle r'_1}
\ar@<.5ex>[r]^ -*!/u0.7pt/{\labelstyle r'_2} 
&   \cF^{top}_\infty(X^2(W)) \Big).
}
\end{equation}
The inclusion  
$W \subset V$ gives projections $P$ and 
$Q$ that fit in a morphism of 
diagrams 
\begin{equation}
\label{eqqq}
\begin{gathered}
\xymatrix
{
\Big( 
\cF^{top}_\infty(X(V)) \ar[d]_P
\ar@<-.5ex>[r]_-*!/d0.5pt/{\labelstyle r_2  
}
\ar@<.5ex>[r]^ -*!/u0.7pt/{\labelstyle r_1
} 
& \cF^{top}_\infty(X^2(V)) \ar[d]^Q \Big)
\\
\Big(  
\cF^{top}_\infty(X(W))  
\ar@<-.5ex>[r]_-*!/d0.5pt/{\labelstyle r'_2}
\ar@<.5ex>[r]^ -*!/u0.7pt/{\labelstyle r'_1} 
&  \cF^{top}_\infty(X^2(W))  \Big).  
}
\end{gathered}
\end{equation}
Further, the restriction 
$R^{K_x}_{\infty, X}$ coincides with the 
morphism between the 
equalizers $\cF^{top}_\infty (X)$ and 
$\cF^{top}_\infty (K_x)$ induced by  $(1)$. 
Denote $(P)^L$ and $(Q)^L$ the left  
adjoints of $P$ and $Q$. 
The functor 
$(P)^L$ is given by the obvious quasi-fully faithful embedding
$$
\cF^{top}_\infty(X(W)) \stackrel{\subset}{\longrightarrow} 
\cF^{top}_\infty(X(V)) \simeq  
\cF^{top}_\infty(X(W)) \times 
\cF^{top}_\infty(U_x), 
$$ 
and similarly for $(Q)^L$.

We will prove the lemma in two steps. First, we show that the diagram 
\begin{equation}
\label{eqqqq}
\begin{gathered}
\xymatrix
{
& \Big( 
\cF^{top}_\infty(X(V)) 
\ar@<-.5ex>[r]_-*!/d0.5pt/{\labelstyle r_2  
}
\ar@<.5ex>[r]^ -*!/u0.7pt/{\labelstyle r_1
} 
& \cF^{top}_\infty(X^2(V))  \Big)
\\
A \ar[r]^-F & \Big(  
\cF^{top}_\infty(X(W))  \ar[u]^{(P)^L}
\ar@<-.5ex>[r]_-*!/d0.5pt/{\labelstyle r'_2}
\ar@<.5ex>[r]^ -*!/u0.7pt/{\labelstyle r'_1} 
&  \cF^{top}_\infty(X^2(W)) \ar[u]_{(Q)^L}
 \Big) 
}
\end{gathered}
\end{equation}
is commutative, that is, there are natural equivalences 
$$ 
r_1 \circ (P)^L \circ F \simeq (Q)^L  
\circ r'_1 \circ F, \text{  and   } r_2 \circ (P)^L \circ F \simeq (Q)^L 
\circ r'_2 \circ F.  
$$
We remark that the commutativity does not hold if we do not precompose with $F$. It is sufficient to prove that, for all 
$v$ and $v'$ in $V$, the diagram  commutes 
after composing on the left with the restriction
$$
R^{U_v \cap U_{v'}}_\infty: \cF^{top}_\infty(X^2(V)) \rightarrow \cF^{top}_\infty(U_v \cap U_{v'}).  
$$
If both $v$ and $v'$ are different 
from $x$, then 
$$
R^{U_v \cap U_{v'}}_\infty \circ r_i \circ (P)^L  \simeq R^{U_v \cap U_{v'}}_\infty \circ (Q)^L 
\circ r'_i,  
$$
and so commutativity holds also when precomposing with $F$. Assume on the other hand that $v=x$. Then 
$$
R^{U_x \cap U_{v'}}_\infty \circ r_i \circ (P)^L \circ F \simeq 0 \simeq R^{U_x \cap U_{v'}}_\infty \circ (Q)^L 
\circ r'_i \circ F.  
$$
The first equivalence follows from the 
support assumption on $A$, and the 
second one is a consequence of the fact that  $R^{U_x \cap U_{v'}}_\infty \circ (Q)^L  = 0$. 
Thus diagram (\ref{eqqqq}) commutes  as   claimed.

The commutativity of (\ref{eqqqq}), and the universal property of the equalizer, 
give us a functor
$$
\tilde{F}: A \longrightarrow \cF^{top}_\infty(X). 
$$
Note that both $P \circ (P)^ L$ and $Q \circ (Q)^ L$  are naturally equivalent to the identity functor, and thus 
$$
R^{K_x}_{\infty, X} \circ \tilde{F} \simeq F. 
$$
The second and final step 
in the proof consists in noticing 
that $\tilde{F}$ is equivalent to   
$C^X_{\infty, K_x} \circ F$. That is, for all  
$L_A$ in the image of $F$, and $L_X$ in $\cF^{top}_\infty(X)$, there is a natural equivalence 
$$
Hom_X(\tilde{F}(L_A), L_X) \simeq 
Hom_{K_x}(L_A, R^{K_x}_{\infty, X}(L_X)),
$$
where $Hom_X(-,-)$ and $Hom_{K_x}(-,-)$ denote respectively the hom spaces in 
$\cF^{top}_\infty(X)$ and in 
$\cF^{top}_\infty(K_x)$. 
This can be checked by computing explicitly the Hom-complexes on both sides in terms of the equalizers (\ref{eqqqqq}) 
and (\ref{eeqq}), see   
Proposition 2.2 of \cite{STZ} for a similar calculation. As a consequence there is a chain of equivalences 
$$
R^{K_x}_{\infty, X} \circ 
C^X_{\infty, K_x} \circ F
\simeq R^{K_x}_{\infty, X} \circ \tilde{F} 
\simeq F
$$
and this concludes the proof. 
\end{proof}

Let $Z$ be a good closed subgraph of $X$. Recall that 
$$
T_{\infty, Z}^X  :  \cF^{top}_\infty(Z) \longrightarrow  \cF^{top}_\infty(X)
$$
is the right adjoint of $S_{\infty, Z}^ X$. 

\begin{proposition} 
\label{prop:partial}
Let $x$ be a vertex of $X$ which does not belong to $Z$. Then 
\begin{enumerate}
\item The functor 
$$
\cF^{top}_\infty(Z) \stackrel{T_{\infty, Z}^{K_x}}{\longrightarrow}  \cF^{top}_\infty(K_x).
$$  
is not supported on $x$.
\item The diagram 
$$
\xymatrix{
&  \cF^{top}_\infty(K_x)  \ar[ld]_ {C_{\infty, K_x}^ X} & \\
\cF^{top}_\infty(X)   && \cF^{top}_\infty(Z) \ar[ll]^{T_{\infty, Z}^X} 
\ar[lu]_{T_{\infty, Z}^{K_x}} 
}
$$ is commutative.
\end{enumerate}
\end{proposition}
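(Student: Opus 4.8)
The plan is to prove both parts by relating the exceptional (co)restrictions to ordinary (co)restrictions for open subgraphs, using the cofiber-sequence description of $S^Z$ from Proposition \ref{prop:closed1} together with the locality of $\cF^{top}_\infty$ (Proposition \ref{prop:open4}) and the punctured-neighborhood support criterion built up in Lemma \ref{lem:fff}.

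For part (1), I would unwind the definition of ``not supported on $x$'': one must show that the composite
$$
\cF^{top}_\infty(Z) \stackrel{T^{K_x}_{\infty, Z}}{\longrightarrow} \cF^{top}_\infty(K_x) \stackrel{R^{U_x^p}_\infty}{\longrightarrow} \cF^{top}_\infty(U_x^p)
$$
vanishes. The key point is that $U_x^p = U_x \cap K_x$ is an open subgraph of $K_x$ that is disjoint from $Z$ (since $Z$ is closed and $x \notin Z$, for sufficiently subdivided edges $U_x$ meets $Z$ only possibly at... in fact not at all once we shrink $U_x$). So I would first argue that $Z$, as a good closed subgraph of $K_x$, is contained in the good open subgraph $K_x - U_x^p$, hence by the compatibility of exceptional restriction with open restriction (Proposition \ref{prop:compr}, applied on $K_x$) the composite $R^{U_x^p}_\infty \circ T^{K_x}_{\infty,Z}$ factors through a functor that is adjoint to $S^Z$ restricted to a subgraph disjoint from $Z$. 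Concretely: $R^{U_x^p}_\infty$ is the right adjoint of the corestriction $C^{K_x}_{\infty, U_x^p}$, and by Proposition \ref{prop:open4} and the fact that $U_x^p \cap Z = \emptyset$ as subgraphs of $K_x$ (so that any object pulled back from $Z$ via $T^{K_x}_{\infty,Z}$ restricts to zero on a neighborhood disjoint from $Z$), one gets the desired vanishing. The honest way to see the vanishing is: $T^{K_x}_{\infty,Z}$ is right adjoint to $S^Z_{\infty, K_x}$, which by Proposition \ref{prop:closed1} sits in a cofiber sequence with corestriction from the open complement $K_x - Z$; since $U_x^p \subset K_x - Z$, the composite $R^{U_x^p}_\infty \circ T^{K_x}_{\infty,Z}$ is right adjoint to $S^Z_{\infty, K_x} \circ C^{K_x}_{\infty, U_x^p}$, and this latter composite is zero because $C$ from an open subgraph of the complement of $Z$ lands in the kernel of restriction to $Z$ (this is exactly the content of the cofiber sequence in Proposition \ref{prop:closed1}). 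Hence its right adjoint is zero, proving (1).

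For part (2), once (1) is established, Lemma \ref{lem:fff} applies directly to the functor $F := T^{K_x}_{\infty, Z}: \cF^{top}_\infty(Z) \to \cF^{top}_\infty(K_x)$, yielding a natural equivalence
$$
R^{K_x}_{\infty, X} \circ C^X_{\infty, K_x} \circ T^{K_x}_{\infty, Z} \simeq T^{K_x}_{\infty, Z}.
$$
It then remains to upgrade this to the stated triangle $C^X_{\infty, K_x} \circ T^{K_x}_{\infty, Z} \simeq T^X_{\infty, Z}$. I would do this by checking the defining adjunction property directly: for $L_Z \in \cF^{top}_\infty(Z)$ and $L_X \in \cF^{top}_\infty(X)$ one wants
$$
Hom_X\bigl(C^X_{\infty, K_x} \circ T^{K_x}_{\infty, Z}(L_Z),\, L_X\bigr) \simeq Hom_Z\bigl(L_Z,\, S^X_{\infty, Z}(L_X)\bigr).
$$
Using the adjunction $(C^X_{\infty, K_x}, R^{K_x}_{\infty, X})$ the left side becomes $Hom_{K_x}(T^{K_x}_{\infty, Z}(L_Z), R^{K_x}_{\infty, X}(L_X))$, and by the adjunction $(S^Z_{\infty, K_x}, T^{K_x}_{\infty, Z})$ this is $Hom_Z(L_Z, S^Z_{\infty, K_x}(R^{K_x}_{\infty, X}(L_X)))$. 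So the claim reduces to the compatibility $S^Z_{\infty, K_x} \circ R^{K_x}_{\infty, X} \simeq S^Z_{\infty, X}$, which is precisely Proposition \ref{prop:compr} with $U = K_x$ (note $Z \subset K_x$ since $x \notin Z$). This closes the argument.

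The main obstacle I anticipate is getting part (1) rigorously: one has to be careful about the interaction of ``goodness'' of $Z$ as a closed subgraph of $X$ versus of $K_x$, and about whether $U_x^p$ really is disjoint from $Z$ once edges are sufficiently subdivided — this is a small point-set bookkeeping issue, but it is exactly where the hypothesis $x \notin Z$ and the standing subdivision assumption are used. The cleanest route is probably to phrase the vanishing via the cofiber sequence $\cF^{top}_\infty(K_x - Z) \to \cF^{top}_\infty(K_x) \to \cF^{top}_\infty(Z)$ of Proposition \ref{prop:closed1} and the observation $U_x^p \subset K_x - Z$, so that $C^{K_x}_{\infty, U_x^p}$ followed by $S^Z_{\infty, K_x}$ is the zero functor; dualizing gives $R^{U_x^p}_\infty \circ T^{K_x}_{\infty, Z} \simeq 0$, which is the statement that $F = T^{K_x}_{\infty, Z}$ is not supported on $x$. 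Everything after that is a formal adjunction chase together with citations to Lemma \ref{lem:fff} and Proposition \ref{prop:compr}.
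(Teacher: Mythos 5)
Part (1) of your argument is correct and is essentially the paper's own proof: dualize the cofiber sequence $\cF^{top}_\infty(K_x-Z)\to\cF^{top}_\infty(K_x)\to\cF^{top}_\infty(Z)$ of Proposition \ref{prop:closed1} to get a fiber sequence whose composite $R^{K_x-Z}_{\infty,K_x}\circ T^{K_x}_{\infty,Z}$ vanishes, and use $U_x^p\subset K_x-Z$.

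Part (2), however, has a genuine gap, in two places. First, the adjunction chase is invalid at its middle step: from $\Hom_{K_x}\bigl(T^{K_x}_{\infty,Z}(L_Z),\,R^{K_x}_{\infty,X}(L_X)\bigr)$ you pass to $\Hom_Z\bigl(L_Z,\,S^Z_{\infty,K_x}(R^{K_x}_{\infty,X}(L_X))\bigr)$ ``by the adjunction $(S^Z_{\infty,K_x},T^{K_x}_{\infty,Z})$.'' But $T$ is defined as the \emph{right} adjoint of $S$, so that adjunction identifies $\Hom_Z(S(a),b)$ with $\Hom_{K_x}(a,T(b))$, i.e.\ it only computes Homs \emph{into} $T$ of something; it says nothing about $\Hom_{K_x}(T(L_Z),M)$ for general $M$. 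The identity you use would require $T$ to also be a \emph{left} adjoint of $S$, which is not established (and is not true in general; it would hold only for $M$ in the essential image of $T$, and $R^{K_x}_{\infty,X}(L_X)$ need not lie there). Second, even granting the chase, you reduce the claim to Proposition \ref{prop:compr} with $U=K_x$, i.e.\ to $S^Z_{\infty,K_x}\circ R^{K_x}_{\infty,X}\simeq S^Z_{\infty,X}$. This is circular: in the paper that case of Proposition \ref{prop:compr} is \emph{deduced from} Proposition \ref{prop:partial}(2) by passing to left adjoints, and indeed by uniqueness of adjoints the two statements are equivalent — so your reduction returns you to the statement you set out to prove. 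The paper avoids this by first establishing $R^{K_x}_{\infty,X}\circ T^{X}_{\infty,Z}\simeq T^{K_x}_{\infty,Z}$ directly (comparing the two localization fiber sequences for $Z\subset X$ and $Z\subset K_x$ and checking that the induced functor on fibers is the identity), and only then invoking part (1) together with Lemma \ref{lem:fff} to conclude $C^X_{\infty,K_x}\circ T^{K_x}_{\infty,Z}\simeq C^X_{\infty,K_x}\circ R^{K_x}_{\infty,X}\circ T^X_{\infty,Z}\simeq T^X_{\infty,Z}$. You cite Lemma \ref{lem:fff} but never actually use it; some such input is unavoidable, since $R^{K_x}_{\infty,X}$ is defined only as a right adjoint and all compatibilities involving it must be extracted from the equalizer description of $\cF^{top}_\infty$.
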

\begin{proof} 
It follows from Proposition \ref{prop:closed1} that   
$$
\cF^{top}_\infty(Z) \stackrel{T_{\infty, Z}^{K_x}}{\longrightarrow}  \cF^{top}_\infty(K_x) \stackrel{R_{\infty, K_x}^{K_x-Z}}{\longrightarrow}  \cF^{top}_\infty(K_x - Z)
$$
is a fiber sequence, and thus the composite is the zero functor. Since  
$U_x ^ p$ is contained in $K_x - Z$, the restriction   
$R^{U_x ^ p}_{\infty, K_x}$ factors through 
$R_{\infty, K_x}^{K_x-Z}$. This implies the first claim. As for the second claim, let us show first that there is a 
natural equivalence 
\begin{equation}
\label{eeeq}
T^{K_x}_{\infty, Z} \simeq 
R^{K_x}_{\infty, X} \circ T^{X}_{\infty, Z}. 
\end{equation}
Consider the commutative square on the right hand side of the following diagram 
$$
\xymatrix{
\cF^{top}_{\infty}(Z) \ar[rr]^-{T^ X_{\infty, Z}} \ar@{-->}[d] 
&& \cF^{top}_{\infty}(X) \ar[rr]^-{R^ {X-Z}_{\infty, X}} \ar[d]_-{R^ {K_x}_{\infty, X}} &&
\cF^{top}(X-Z) \ar[d]^-{R^ {K_x-Z}_{\infty, X}} 
\\ 
\cF^{top}_{\infty}(Z) \ar[rr]^-{T^{K_x}_{\infty, Z}} &&  
\cF^{top}_\infty(K_x) \ar[rr]^-{R^{K_x - Z}_{\infty, K_x}} && 
\cF^{top}_\infty(K_x - Z).  
}
$$
The functor induced between the fibers, which is denoted by the dashed arrow, is equivalent to the identity. This gives equivalence  (\ref{eeeq}). As a consequence 
we obtain
 $$
C_{\infty, K_x}^ X \circ T^{K_x}_{\infty, Z}  
\simeq 
C_{\infty, K_x}^ X \circ R^{K_x}_{\infty, X} \circ T^X_{\infty, Z} 
\simeq 
T^X_{\infty, Z}.
$$
Indeed, the first equivalence follows from (\ref{eeeq}) and the second one from Lemma \ref{lem:fff}. This concludes the proof. 
\end{proof}

\begin{proof}[The proof of Proposition \ref{prop:compr}] 
Let $W$ be the set of vertices of $X$ that do not belong to $U$. Note that $U$ is a connected component of the open 
subgraph $(X - W) \subset X$. By induction it is sufficient to 
prove the claim in the following two cases:
\begin{enumerate} 
\item when $U$ is equal to $K_x$ for some 
vertex $x$ of $X$, and 
\item when $U$ is a connected component of $X$. 
\end{enumerate}
Proposition \ref{prop:partial} gives a proof in   the first case. Indeed, it is sufficient 
to take right adjoints in Claim $(2)$ of 
Proposition \ref{prop:partial}  to recover the commutativity statement from Proposition \ref{prop:compr} for this class of open subgraphs.  The second case is easier. The complement  $X - U$ is open and we have a splitting 
$$
\cF^{top}_\infty(X) \simeq \cF^{top}_\infty(U) \times \cF^{top}_\infty(X-U),  
$$
and the claim follows immediately from here. 
\end{proof}


\section{The topological Fukaya category and closed covers} 
\label{sec:gluing}


In this section we prove a key gluing statement 
which is one of the main inputs in our proof of mirror symmetry for three dimensional LG models. 

Let $X$ be a connected ribbon graph whose underlying topological space is homeomorphic to a copy of $S^1$ together a finite number of open edges attached to it. We call such a ribbon graph a 
\emph{wheel}.  
Any choice of orientation on $S^1$ partitions the sets of open edges of $X$ into two subsets, which we call \emph{upward} and \emph{downward edges} respectively. An upward or downward edge is also called a \emph{spoke}. 
For our purposes it will not be important to label either of these two sets as the set of 
upward or downward   edges, but only to distinguish between the two. Thus we do not need to impose on $X$ any  additional structure beyond the ribbon structure (such as a choice of orientation).   
\begin{definition} 
Let $n_1$ and $n_2$ be in $\mathbb{Z}_{\geq 0}$. We denote $\Lambda(n_1, n_2)$ a wheel with $n_1$ upward and $n_2$ downward edges.   With small abuse of notations, we sometimes denote $\Lambda(0,0)$ simply $S^1$. 
\end{definition}

We denote $E(+)$ and $E(-)$ the open 
subgraphs of $\Lambda(n_1, n_2)$ 
given by the collection of the $n_1$   upward edges, and of the $n_2$  
downward edges respectively. 

\begin{remark}
The  notation $\Lambda(n_1, n_2)$ does not pick out a single ribbon graph, but rather a class of ribbon graphs. Indeed specifying the number of upward and downward edges does not suffice to pin down a homeomorphism type, or even the number of vertices. However all graphs of type $\Lambda(n_1, n_2)$ deform into one another in a way that does not affect the sections of $\cF^{top}$ and $\cF^{top}_\infty$ (see \cite{DK}). We use  
$\Lambda(n_1, n_2)$ to refer to 
any  ribbon graph having the properties listed in the definition.  
\end{remark}

The category $\cF_\infty^{top}(\Lambda(n_1,n_2))$ can be 
described explicitly in terms of quiver representations. Consider the closed subgraph
$$
S \subset \Lambda(n_1, n_2)
$$
where $S$ is the central circle of the wheel. The graph $S$ has $n_1 + n_2$ bivalent vertices, which are in canonical bijection with the spokes of $\Lambda(n_1, n_2)$, and its underlying topological space is $S^1$. 
Label the vertices of $S$ with  $+$ or $-$  depending on whether the corresponding spoke is upward or downward. We choose an orientation on $S$. An orientation determines a cyclic order on the edges of $S$. If $e$ is edge we denote $\tau(e)$ the edge that follows it in the cyclic order. There is a (unique) vertex of $S$ incident to both $e$ and $\tau(e)$: we say that the pair $e, \tau(e)$ is right-handed if this vertex is labeled by a 
$+$, and left-handed if it is labeled by a 
$-$.

Let $Q(n_1, n_2)$ be the quiver  defined as follows
\begin{itemize}
\item The set of vertices of $Q(n_1, n_2)$ is the set the edges of $S$. 
If $e$ is an edge of $S$, we denote $v_{e}$   the corresponding vertex of $Q(n_1, n_2)$. 
\item There is an arrow joining $v_{e}$ and $v_{\tau(e)}$. It is oriented from  $v_e$ to $v_{\tau(e)}$ if the pair $e, \tau(e)$ is right-handed, and  from $v_{\tau(e)}$ to $v_e$ otherwise. 
\end{itemize}

Recall that we can attach to 
a $\bZ$-graded dg category a $\bZ_2$-graded category by $\bZ_2$-periodization, see Section 1.2  of \cite{DK} for more details. 
Denote $Rep^{\infty}(Q(n_1, n_2))$ the $\bZ_2$-periodization of the triangulated dg category of 
(non-necessarily finite dimensional) representations of $Q(n_1, n_2)$,  $Rep^\infty(Q(n_1, n_2)) \in  \mathrm{DGCat}^{(2)}_{\mathrm{cont}}$. 
\begin{lemma} 
\label{ququiver}
There is an equivalence 
$$
\cF^{top}_\infty(\Lambda(n_1, n_2)) \simeq Rep^\infty(Q(n_1, n_2)).
$$
\end{lemma}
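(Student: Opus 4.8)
\textbf{Proof strategy for Lemma \ref{ququiver}.}

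The plan is to compute $\cF^{top}_\infty(\Lambda(n_1,n_2))$ directly as a limit over the cyclic category, using the closed cover of $\Lambda(n_1,n_2)$ by the central circle $S$ together with the open spokes, and then to identify the resulting limit diagram with the one computing $Rep^\infty(Q(n_1,n_2))$. First I would reduce to the wheel with \emph{all} spokes present but the circle subdivided so that each spoke meets $S$ at its own bivalent vertex; by the deformation-invariance remark preceding the lemma this changes nothing. Then I would apply the open-cover descent of Proposition \ref{prop:open4} (extended to arbitrary open covers, as noted after its proof) to the cover of $\Lambda(n_1,n_2)$ by the stars $U_v$ of the bivalent vertices $v$ of $S$. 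Each $U_v$ is a three-valent corolla (two edges of $S$ plus one spoke) and each double intersection $U_v \cap U_{v'}$ is a single open edge; so $\cF^{top}_\infty(\Lambda(n_1,n_2))$ is the equalizer of
$$
\prod_{v} \cF^{top}_\infty(U_v) \rightrightarrows \prod_{e \in \Edge(S)} \cF^{top}_\infty(U_e).
$$
Here $\cF^{top}_\infty(U_v) \simeq \cI\cE_2 = Ind(\cE^2)$ is the Ind-completed matrix factorization category $MF^\infty(\bA^1, z^3)$ (a $\bZ_2$-folded category equivalent, via \cite{DK} Section 2.3.5, to representations of the $A_2$ quiver, i.e.\ to $\Perf^{(2)}$ of a point's worth of arrow data), and $\cF^{top}_\infty(U_e) \simeq \cI\cE_1 \simeq \cQ Coh^{(2)}(\bG_m)$.

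The heart of the argument is to match this $\Lambda$-indexed limit with the bar/\v{C}ech presentation of $Rep^\infty(Q(n_1,n_2))$. I would use the standard fact that the category of representations of a quiver glued out of $A_2$-pieces is the limit of the corresponding diagram of $\bZ_2$-folded $A_2$-representation categories, with the restriction functors to the edge categories given by ``forget the arrow, remember the vector space at the head (resp.\ tail)''. The labelling of vertices of $S$ by $+$/$-$, and hence the handedness of each consecutive pair $(e,\tau(e))$, records precisely which of the two structural maps of the cocyclic object $\cE^\bullet$ (namely the ``active'' versus ``inert'' type morphisms in $\Lambda$, cf.\ interstice duality, Proposition \ref{prop:inter}) is being used at that vertex; this is exactly the datum that determines the orientation of the arrow of $Q(n_1,n_2)$ between $v_e$ and $v_{\tau(e)}$. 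So the combinatorial bookkeeping is: the cyclic word in $\Lambda$ read off from $\cL(\Lambda(n_1,n_2))$ (the cyclic set attached to the ribbon graph, Proposition \ref{prop:ribbon}) is literally the quiver $Q(n_1,n_2)$ once one translates ``marked point of type $+$'' into ``source-type gluing'' and ``type $-$'' into ``target-type gluing''. I would make this precise by unwinding the definition of $\cL(X)$ for a wheel: its $1$-simplices are the edges of $S$ and its $0$-simplices encode the vertices, with the two face maps dictated by the cyclic order and the $\pm$ labels, which is exactly the data of $Q(n_1,n_2)$ as a free category on a graph.

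The main obstacle, and where I would spend the most care, is the precise identification at a single vertex: showing that the value $\cI\cE_2$ of the Ind-completed cocyclic object on $\langle 2\rangle$, together with its two corestriction-to-edge functors $\cI\cE_2 \to \cI\cE_1$, is equivalent \emph{as a diagram} to the $\bZ_2$-folded category $Rep^\infty(A_2)$ with its two ``evaluate at head/tail'' functors to $\cQ Coh^{(2)}(\bG_m)$ — and that the equivalence is natural enough to be assembled over all vertices of $S$ compatibly with the handedness choices. This is essentially a $\bZ_2$-graded upgrade of the computations in Section 2 and Section 5 of \cite{DK} (where $\cE^2 \simeq \Perf$ of the $A_2$ quiver is established) together with Kn\"orrer periodicity in the form of Proposition \ref{prop:resres} to put the edge category into the form $\cQ Coh^{(2)}(\bG_m)$; once the per-vertex statement is in place, the global equivalence follows formally because both sides are limits of the same diagram in $\widehat{dgCat^{(2)}}$. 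Finally I would note that the non-subdivided wheel $\Lambda(n_1,n_2)$ gives the same answer by deformation invariance, completing the proof.
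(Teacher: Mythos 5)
Your strategy is, in substance, the one the paper intends: the paper's proof of Lemma \ref{ququiver} is a one-line deferral to Theorem 3.7 of \cite{STZ}, and that argument is exactly the local-to-global computation you describe — cover the (suitably subdivided) wheel by the stars of its trivalent vertices, identify each star's category with $\bZ_2$-folded $A_2$-quiver representations via $\cE^2 = MF(\bA^1,z^3)$, and match the resulting equalizer with the presentation of $Rep^\infty(Q(n_1,n_2))$ as a limit of $A_2$-pieces glued over the vertices of $Q$, with the $\pm$ labels governing which of the two restriction functors $\cE^2\to\cE^1$ appears and hence the orientation of each arrow.

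There is, however, one concrete error that you should fix, because it sits exactly at the point where the two limit diagrams are compared. The category attached to a double overlap $U_v\cap U_{v'}$ — an open arc of the central circle $S$ — is $\cI\cE_1 \simeq Ind(\Perf^{(2)}(\kappa))$, i.e.\ ($\bZ_2$-folded, not necessarily finite-dimensional) vector spaces, and \emph{not} $\cQ Coh^{(2)}(\bG_m)$. The latter is the category of the full circle $\Lambda(0,0)$ (Example \ref{ex:_top}), and, on the B-side, the category $\cB(e)$ attached to an edge of the tropical graph; your appeal to Kn\"orrer periodicity in the form of Proposition \ref{prop:resres} belongs to that B-side bookkeeping (Theorem \ref{gluing MF}) and plays no role in this purely A-side lemma. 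The correction is not cosmetic: the limit presentation of $Rep^\infty(Q(n_1,n_2))$ has the category of plain vector spaces at each vertex of $Q$ (equivalently, at each edge of $S$), with the two "evaluate at head/tail" functors landing there, so it is precisely the identification $\cI\cE_1\simeq \mathrm{Mod}^{(2)}_\kappa$ that makes the two \v{C}ech diagrams term-by-term equivalent. With $\cQ Coh^{(2)}(\bG_m)$ in that slot the diagrams would not match. Once this is repaired (and the edges of $S$ are subdivided enough that the vertex stars really do form an open cover, cf.\ the footnote to Lemma \ref{lem:fff}), the rest of your outline goes through and agrees with the cited argument of \cite{STZ}.
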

\begin{proof}
  This can be seen by appealing to the description of the Fukaya category of a surface provided by \cite{HKK}, which involves giving explicit presentations of the category as representations of quivers associated to certain collections of arcs on the surface. In \cite[Section 3.6]{HKK}, it is shown that this definition of the category coincides with the cosheaf-of-categories approach used in this paper. The desired quiver description corresponds to choosing a certain collection of arcs as follows. The graph $\Lambda(n_{1},n_{2})$ embeds in to a cylinder $[0,1] \times S^{1}$, in such a way that the upward edges end on $\{1\} \times S^{1}$ and the downward edges end on $\{0\}\times S^{1}$. The dual family of arcs is obtained by choosing one arc connecting the two boundary components $\{0\}\times S^{1}$ to $\{1\} \times S^{1}$ passing between each consecutive pair of edges. A representative example is depicted in Figure \ref{fig:hkk}. The dashed lines are the ribbon graph, while the dotted lines are the dual collection of arcs. Each arc corresponds to a vertex of the quiver, and the arrows in the quiver correspond to the arrows shown in the figure. The quiver corresponding to this collection of arcs is then nothing but $Q(n_{1},n_{2})$.
\begin{figure}[h]
  \centering
  \includegraphics[width=0.3\textwidth]{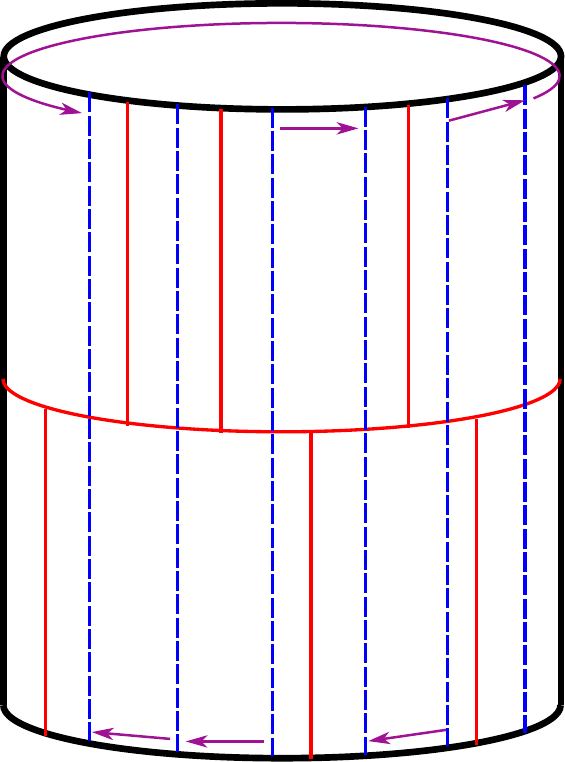}
  \caption{A ribbon graph of class $\Lambda(3,3)$ (solid), and the dual collection of arcs (dashed). The corresponding quiver has one vertex for each arc, and arrows that correspond to those shown.}
  \label{fig:hkk}
\end{figure}
\end{proof}

\begin{remark}
  In the preceding proof, we have used an equivalence between the model of the Fukaya category used in \cite{HKK} and the one used in the present paper. One may ask how this comparison is justified technically, since \cite{HKK} is written in terms of the model category of $A_{\infty}$ categories and the present paper uses an $\infty$-category dg categories (a dg category being the special case of an $A_{\infty}$ category with vanishing higher products). Fortunately, the statement of \cite[Theorem 3.1]{HKK} asserts that the Fukaya category as defined in \cite{HKK}, which is an $A_{\infty}$ category, is Morita equivalent to a homotopy colimit of a diagram of dg categories (not $A_{\infty}$ categories). Further, homotopy colimits in the model category of dg categories compute $\infty$-categorical colimits in the $\infty$-category obtained by localizing at weak equivalences; see \cite[Section 4.2.4]{Lu} where such a comparison is made. 
\end{remark}


\begin{lemma}
\label{lem:local}
The following is a fiber product in 
$\mathrm{DGCat}^{(2)}_{\mathrm{cont}}$
$$
\xymatrix{
\cF^{top}_{\infty}(\Lambda(n_1, n_2)) 
\ar[rr]^ - {S^{\Lambda(n_1, 0)}_\infty} \ar[d] _ - {S^{\Lambda(0, n_2)}_\infty} & & 
\cF^{top}_{\infty}(\Lambda(n_1, 0)) \ar[d] ^ - {S^{\Lambda(0, 0)}_\infty}\\
\cF^{top}_{\infty}(\Lambda(0, n_2)) \ar[rr] ^ - {S^{\Lambda(0, 0)}_\infty} &  &
\cF^{top}_{\infty}(\Lambda(0, 0)). 
}
$$
\end{lemma}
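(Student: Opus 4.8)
The plan is to reduce the statement to the open-cover descent already available (Proposition \ref{prop:open4}) together with the cofiber-sequence description of exceptional restrictions (Proposition \ref{prop:closed1}). Concretely, I would first pass to an explicit model where both $\Lambda(n_1,n_2)$ and $\Lambda(n_1',n_2')$ arise as good closed subgraphs of $\Lambda(m_1,m_2)$ whose complements $U = \Lambda(m_1,m_2) - \Lambda(n_1,n_2)$ and $U' = \Lambda(m_1,m_2) - \Lambda(n_1',n_2')$ are good open subgraphs. Note that $U$ and $U'$ are disjoint unions of open edges (subsets of the spokes not lying in the respective subwheels), hence $U \cap U' = \emptyset$ up to isotopy, and $\Lambda(l_1,l_2) = \Lambda(n_1,n_2) \cap \Lambda(n_1',n_2') = \Lambda(m_1,m_2) - (U \sqcup U')$. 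The first step is to record, via Proposition \ref{prop:closed1}, the three cofiber sequences
\begin{equation}
\cF^{top}_\infty(U) \to \cF^{top}_\infty(\Lambda(m_1,m_2)) \to \cF^{top}_\infty(\Lambda(n_1,n_2)),
\end{equation}
and similarly with $U'$ in place of $U$, and with $U \sqcup U'$ in place of $U$ mapping to $\cF^{top}_\infty(\Lambda(l_1,l_2))$.

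The second step is to assemble these into the square. Since $U$ and $U'$ are disjoint, $\cF^{top}_\infty(U \sqcup U') \simeq \cF^{top}_\infty(U) \times \cF^{top}_\infty(U')$, with corestriction $C_{\infty}$ into $\cF^{top}_\infty(\Lambda(m_1,m_2))$ given componentwise by $C^{\Lambda(m_1,m_2)}_{\infty, U}$ and $C^{\Lambda(m_1,m_2)}_{\infty, U'}$. In the stable setting $\widehat{dgCat^{(2)}}$, a commuting square is a fiber product precisely when the induced map on the horizontal (or vertical) fibers of the two rows is an equivalence. Here the top row's fiber over $\cF^{top}_\infty(\Lambda(m_1,m_2)) \to \cF^{top}_\infty(\Lambda(n_1,n_2))$ is $\cF^{top}_\infty(U)$, the bottom row's fiber over $\cF^{top}_\infty(\Lambda(n_1',n_2')) \to \cF^{top}_\infty(\Lambda(l_1,l_2))$ should again be $\cF^{top}_\infty(U)$ (the piece of the complement removed in passing from $\Lambda(n_1',n_2')$ to $\Lambda(l_1,l_2)$ is exactly $U$), and the comparison map is the identity. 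Making this precise requires the compatibility between exceptional restrictions and open (co)restrictions established in Proposition \ref{prop:compr} and Proposition \ref{prop:partial}: one checks that $S^{\Lambda(l_1,l_2)}_\infty \circ S^{\Lambda(n_1,n_2)}_\infty \simeq S^{\Lambda(l_1,l_2)}_\infty \circ S^{\Lambda(n_1',n_2')}_\infty$ and that the two compositions $\Lambda(m_1,m_2) \to \Lambda(n_i,n_i') \to \Lambda(l_1,l_2)$ have compatible fibers, so that taking fibers of the rows of the candidate square reproduces the identity of $\cF^{top}_\infty(U)$.

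Alternatively — and this may be cleaner — I would dualize: by the duality $\cF^{top}_\infty \leftrightarrow$ open-corestriction formalism, exceptional restrictions $S_\infty$ are right adjoints of exceptional corestrictions $T_\infty$, and Proposition \ref{prop:open4} already gives that $\cF^{top}_\infty$ turns the open cover $\Lambda(m_1,m_2) = U'' \cup U'''$ — where $U'' = \Lambda(m_1,m_2) - \Lambda(n_1',n_2')$-complement-style neighborhoods — into a fiber product. One then identifies the exceptional-restriction square with (a retract of, or an instance of) such an open-cover fiber product by using that $\Lambda(l_1,l_2)$ sits inside open neighborhoods of both $\Lambda(n_1,n_2)$ and $\Lambda(n_1',n_2')$ and invoking Proposition \ref{prop:compr}. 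Either way, the key local computation is Lemma \ref{ququiver}: in quiver-representation terms, restricting along a closed subwheel inclusion forgets the representation data at the vertices of $S$ lying on the deleted spokes, and $\Lambda(m_1,m_2)$-representations are visibly the fiber product of the $\Lambda(n_1,n_2)$- and $\Lambda(n_1',n_2')$-representations over the $\Lambda(l_1,l_2)$-representations (a gluing of quivers along the shared subquiver). This gives a direct hands-on verification once the abstract functors are matched with the quiver picture.

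The main obstacle I anticipate is not the quiver-level bookkeeping but verifying that the \emph{functors} $S^{\bullet}_\infty$ in the square genuinely agree — up to coherent homotopy, in $\widehat{dgCat^{(2)}}$ — with the restriction functors appearing in the iterated open/closed cofiber sequences, i.e. the naturality of exceptional restriction under further exceptional restriction (the relation $S^{\Lambda(l_1,l_2)}_\infty \circ S^{\Lambda(n_1,n_2)}_\infty \simeq S^{\Lambda(l_1,l_2)}_\infty$ composed appropriately). This is exactly the type of compatibility packaged in Propositions \ref{prop:compr} and \ref{prop:partial}, so the proof will largely consist of citing those and the cofiber sequences of Proposition \ref{prop:closed1}, then checking the fiber-comparison map is an equivalence; the genuinely new content is minimal, which is why I expect the actual proof to be short (likely a reference to the analogous argument for open covers plus the observation that $U \cap U' = \emptyset$).
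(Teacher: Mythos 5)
Your reduction to the cofiber sequences of Proposition \ref{prop:closed1} identifies the fibers of the rows correctly, but the pivot of your argument --- ``in the stable setting $\widehat{dgCat^{(2)}}$, a commuting square is a fiber product precisely when the induced map on the horizontal fibers of the two rows is an equivalence'' --- is false, because $\widehat{dgCat^{(2)}}$ is not a stable $\infty$-category (its objects are stable categories, but it itself is not). A counterexample: for a fully faithful, non-essentially-surjective functor $g: C \to D$, the square with rows $0 \to D$ and $C \xrightarrow{g} D$ and right-hand vertical map the identity has both horizontal fibers zero, yet its pullback is $C \neq 0$. Matching the fibers of the rows can at best give full faithfulness of the comparison functor from $\cF^{top}_{\infty}(\Lambda(m_1,m_2))$ to the pullback; the essential surjectivity is the genuine content of the lemma and your proposal never supplies it. The paper's proof is organized around exactly this point: it uses the fully faithful exceptional corestrictions $T_\infty$ (right adjoints of the $S_\infty$) to embed the pullback of the cospan quasi-fully faithfully into $\cF^{top}_{\infty}(\Lambda(m_1,m_2))$, and then proves surjectivity by exhibiting, via the quiver model of Lemma \ref{ququiver}, explicit projective generators of $Rep^\infty(Q(m_1,m_2))$ and checking by inspection that they lie in the image.

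Your closing remark that in quiver terms the statement is ``visibly'' a gluing of representation categories is the right instinct --- it is where the actual proof lives --- but as written it is an assertion rather than an argument. Note in particular that $Q(n_1,n_2)$ is not a subquiver of $Q(m_1,m_2)$: deleting spokes merges marked points on the central circle, so vertices of $Q(m_1,m_2)$ get identified and arrows get composed, and the exceptional restriction is not ``forgetting data at some vertices.'' So the quiver-level verification still requires matching the abstract functors with concrete ones and then running the generator check; that is the step you would need to carry out. (The paper also records a second proof in the special case $\Lambda(n_1,0)\cup\Lambda(0,n_2)=\Lambda(n_1,n_2)$ by identifying the square with Zariski descent for $\cQ Coh^{(2)}$ of a stacky $\mathbb{P}^1$, which is closer in spirit to your ``dualize and use open covers'' alternative but relies on mirror-symmetry input rather than on Proposition \ref{prop:open4}.)
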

\begin{proof} 
%
The claim could be proved directly, but it is instructive to give  a proof based on mirror symmetry. 
Indeed, this 
clarifies the connection between gluing along closed subskeleta and Zariski descent.  Let $\mathbb{P}^1(n_1, n_2)$ be the projective line with two stacky points at $0$ and $\infty$ having isotropy isomorphic  to the groups of roots of unity $\mu_{n_1}$ and 
$\mu_{n_2}$. More formally, $\mathbb{P}^1(n_1, n_2)$ is the push-out of the following diagram 
in the category of DM stacks, 
$$
\xymatrix{
& \mathbb{G}_m \ar[dl] \ar[dr] & \\ 
[\mathbb{A}^1/\mu_{n_1}] & & [\mathbb{A}^1/\mu_{n_2}],  
}
$$
where $[\mathbb{A}^1/\mu_{n_1}]$ and 
$[\mathbb{A}^2/\mu_{n_2}]$ are the quotient stacks of $\mathbb{A}^1$ under the canonical 
action of $\mu_{n_1}$ and $\mu_{n_2}$. Zariski descent implies that the diagram
\begin{equation}
\label{eq1}
\begin{gathered}
\xymatrix{
\cQ Coh^{(2)}(\mathbb{P}^1(n_1, n_2)) \ar[r] \ar[d] & \cQ Coh^{(2)}([\mathbb{A}^1/\mu_{n_1}]) \ar[d] \\ 
\cQ Coh^{(2)}([\mathbb{A}^1/\mu_{n_2}]) \ar[r] & \cQ Coh^{(2)}(\mathbb{G}_m), 
}
\end{gathered}
\end{equation}
where all the arrows are pullbacks, is a fiber product. It follows from \cite{STZ} and \cite{DK} that diagram 
(\ref{eq1}) is in fact equivalent to the 
diagram in the statement of the lemma. More precisely, there are commutative diagrams 
$$
\xymatrix{
\cF^{top}_{\infty}(\Lambda(n_1, n_2)) \ar[d]_-\simeq \ar[r]^ -{S_\infty^{\Lambda(n_1,0)}} & 
 \cF^{top}_{\infty}(\Lambda(n_1, 0)) \ar[d]^ -\simeq \\   
\cQ Coh^{(2)}(\mathbb{P}^1(n_1, n_2)) \ar[r]  & \cQ Coh^{(2)}([\mathbb{A}^1/\mu_{n_1}]), 
}
\xymatrix{
 \cF^{top}_{\infty}(\Lambda(n_1, n_2)) \ar[d]_-\simeq \ar[r] ^ -{S_\infty^{\Lambda(n_2 ,0)}} & 
 \cF^{top}_{\infty}(\Lambda(0, n_2))  \ar[d] ^ -\simeq \\
\cQ Coh^{(2)}(\mathbb{P}^1(n_1, n_2)) \ar[r]  & \cQ Coh^{(2)}([\mathbb{A}^1/\mu_{n_2}]),
}
$$
and 
$$
\xymatrix{
\cF^{top}_{\infty}(\Lambda(n_1, 0)) \ar[d]_-\simeq \ar[r] ^ -{S_\infty^{\Lambda(0,0)}} & 
 \cF^{top}_{\infty}(\Lambda(0, 0)) \ar[d]^ -\simeq \\ 
\cQ Coh^{(2)}([\mathbb{A}^1/\mu_{n_1}]) \ar[r]  & \cQ Coh^{(2)}(\mathbb{G}_m), 
}
\xymatrix{   \cF^{top}_{\infty}(\Lambda(0, n_2)) \ar[d]_-\simeq \ar[r] ^ -{S_\infty^{\Lambda(0,0)}} & 
 \cF^{top}_{\infty}(\Lambda(0, 0))  \ar[d] ^ -\simeq \\ 
\cQ Coh^{(2)}([\mathbb{A}^1/\mu_{n_2}]) \ar[r]  & 
\cQ Coh^{(2)}(\mathbb{G}_m).}
$$
such that all vertical arrows are equivalences. 
Since diagram (\ref{eq1}) is a fiber product we 
conclude that also the diagram in the statement of 
the lemma is a fiber product. 

%
\end{proof}

The following is the main result of this 
section. In order to avoid cluttering the diagrams, we  denote  restrictions and exotic restrictions simply $R_\infty$ and $S_\infty$.

Let $X$ be a ribbon graph, and let $Z$ be a closed subgraph. It is useful to consider a combinatorial analogue of a tubular neighborhood of $Z$ inside 
$X$, which we denote $N_ZX$: the graph $N_ZX$ is given by $Z$ plus additional open edges for each edge in $X$ that does not lie in $Z$, but is incident to a vertex in $Z$. Here is a formal definition. We subdivide all the edges of $X$ which do not lie in $Z$, but whose endpoints lie in 
$Z$. We denote the resulting graph again $X$: from now on, every time we will consider the object 
$N_ZX$, we will assume implicitly that the edges of $X$ are sufficiently subdivided. Let $\overline {Z}^c$ be the maximal (non-necessarily good) closed subgraph of $X$ such that 
$$
Z \cap \overline{Z}^c = \varnothing. 
$$
We denote $N_ZX$ the   open subgraph 
$$
X - \overline {Z}^c \subset X
$$

\begin{theorem}
\label{gluing-main}
Let $X$ be a ribbon graph. Let $Z_1$ and $Z_2$ be good closed subgraphs such that:
\begin{itemize}
\item $Z_1 \cup Z_2 = X$
\item The underlying topological space of $Z_{1,2}:=Z_1 \cap Z_2$ is a disjoint union of circles
\item For every connected component $C$ of $Z_{1,2}$ 
the triple of ribbon graphs
$$
N_CX \cap Z_1 \, \subset \, N_C X  \, \supset \, N_CX \cap Z_2
$$ 
is isomorphic to 
$$
\Lambda(n_1, 0) \subset \Lambda(n_1, n_2) \supset 
\Lambda(0, n_2)
$$
for some $n_1, n_2 \in \mathbb{N}.$
\end{itemize}
Then the commutative diagram 
$$
\xymatrix{
\cF^{top}_{\infty}(X)  
\ar[rr]^ - {S_\infty} \ar[d] _ - {S_\infty} & & 
\cF^{top}_{\infty}(Z_1) \ar[d] ^ - {S_\infty}\\
\cF^{top}_{\infty}(Z_2) \ar[rr] ^ - {S _\infty} &  &
\cF^{top}_{\infty}(Z_{1,2}). 
}
$$
is a fiber product in 
$\mathrm{DGCat}^{(2)}_{\mathrm{cont}}$. 
\end{theorem}

We will assume for simplicity that $Z_{1,2}$ has only one connected component: the general case is proved in the same way.

Proving Theorem \ref{gluing-main} will require some preparation. 
Let $X$, $Z_1$ and $Z_2$ be as in Theorem \ref{gluing-main}, and assume that $Z_{1,2}$ has only one connected component.  By assumption $N_{Z_{1,2}}X$, $N_{Z_{1,2}}Z_1$ and $N_{Z_{1,2}}Z_2$ are all wheel-type graphs. We make the following notations:
\begin{itemize}
\item $U_1 = Z_1 \cup N_{Z_{1,2}}X$. The graph $U_1$ is an open subgraph of $X$ and $Z_1$ is a good closed subgraph of $U_1$  
\item $U_2 = Z_2 \cup N_{Z_{1,2}}X$. The graph $U_2$ is an open subgraph of $X$ and $Z_2$ is a good closed subgraph of $U_2$  
\item $U_{1,2} = U_1 \cap U_2$  
\item $U_1^ o = Z_1 - Z_{1,2},$ and $U_2^ o = Z_2 - Z_{1,2}$, where the superscript $o$ stands for \emph{open}. The graphs $U_1^ o$ and $U_2 ^ o$ are open subgraphs of $X$    
\item $U_1^ e = U_{1, 2} \cap U_1^ o$ and 
$U_2^ e = U_{1, 2} \cap U_2^ o$, where the superscript $e$ stands for \emph{edges}  
\end{itemize} 
Note that $U_{1,2}$ is equal to $N_{Z_{1,2}}X \cong \Lambda(n_1, n_2)$, and that the embeddings 
$$
U_1 ^ e \subset  U_{1,2}, \quad U_2 ^ e \subset  U_{1,2}
$$
are isomorphic to the embeddings of the spokes 
$$
E(+) \subset \Lambda(n_1, n_2), \quad E(-) \subset \Lambda(n_1, n_2). 
$$
Also, we have identifications
$
N_{Z_{1,2}}Z_1 = Z_1 \cap U_{12},$ and $ N_{Z_{1,2}}Z_2 = Z_2 \cap U_{12}. 
$

The key ingredient in the proof of Theorem \ref{gluing-main} is the following lemma. 
\begin{lemma}
\label{gluing-lemma}
All the interior 
squares in the commutative diagram 
$$
\xymatrix{
\cF^{top}_{\infty}(X) \ar[r]^-{R_\infty} \ar[d]_-{R_\infty} & \cF^{top}_{\infty}(U_1) \ar[r]^-{S_\infty} \ar[d]_-{R_\infty} &  \cF^{top}_{\infty}(Z_1) \ar[d]^ -{R_\infty} \\ 
\cF^{top}_{\infty}(U_2) \ar[r]^-{R_\infty} \ar[d]_-{S_\infty} & \cF^{top}_{\infty}(U_{1,2}) \ar[r]^-{S_\infty} \ar[d]_-{S_\infty} &  \cF^{top}_{\infty}(Z_1 \cap U_{1,2}) \ar[d]^ -{S_\infty} \\ 
\cF^{top}_{\infty}(Z_2) \ar[r]^-{R_\infty}   & \cF^{top}_{\infty}(Z_2 \cap U_{1,2}) \ar[r]^-{S_\infty}   &  \cF^{top}_{\infty}(Z_{1,2}  )  
}
$$
 are fiber products. 
\end{lemma}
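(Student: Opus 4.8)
The plan is to analyze the $3 \times 3$ diagram one interior square at a time, reducing each to a statement that is already available from Section \ref{restrictions closed} or to the local computation of Lemma \ref{lem:local}. First I would record that all nine entries come with genuinely compatible restriction/exceptional-restriction functors: the horizontal arrows labelled $R_\infty$ are open corestrictions' right adjoints, the horizontal arrows labelled $S_\infty$ are exceptional restrictions to good closed subgraphs, and similarly for the columns. The commutativity of the whole diagram is a bookkeeping matter, following from the compatibility statement Proposition \ref{prop:compr} (for the squares that mix an open restriction with a closed one) together with the obvious transitivity of open restrictions and of exceptional restrictions; I would state this at the outset and not belabor it.

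The core of the argument is the \emph{top-left} square,
$$
\xymatrix{
\cF^{top}_{\infty}(X) \ar[r]^-{R_\infty} \ar[d]_-{R_\infty} & \cF^{top}_{\infty}(U_1) \ar[d]^-{R_\infty} \\
\cF^{top}_{\infty}(U_2) \ar[r]^-{R_\infty} & \cF^{top}_{\infty}(U_{1,2}),
}
$$
which is a fiber product by the second part of Proposition \ref{prop:open4}, since $X = U_1 \cup U_2$ (by construction $U_1 \cup U_2 = Z_1 \cup Z_2 \cup N_{Z_{1,2}}X = X$) and $U_1 \cap U_2 = U_{1,2}$. For the \emph{top-right} square,
$$
\xymatrix{
\cF^{top}_{\infty}(U_1) \ar[r]^-{S_\infty} \ar[d]_-{R_\infty} & \cF^{top}_{\infty}(Z_1) \ar[d]^-{S_\infty} \\
\cF^{top}_{\infty}(U_{1,2}) \ar[r]^-{S_\infty} & \cF^{top}_{\infty}(Z_1 \cap U_{1,2}),
}
$$
I would argue as follows: $Z_1$ is a good closed subgraph of $U_1$ with open complement $U_1^o = Z_1 - Z_{1,2}$, and similarly $Z_1 \cap U_{1,2}$ is a good closed subgraph of $U_{1,2}$ with open complement $U_1^e$. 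By Proposition \ref{prop:closed1} both rows sit in cofiber sequences with third terms $\cF^{top}_\infty(U_1^o)$ and $\cF^{top}_\infty(U_1^e)$; but $U_1^o = U_1^e$ as ribbon graphs (both being the "open" part $Z_1 - Z_{1,2}$ — note $U_1^e = U_{1,2} \cap U_1^o$ and $U_1^o \subset U_{1,2}$ after subdivision), and under these identifications the restriction $\cF^{top}_\infty(U_1^o) \to \cF^{top}_\infty(U_1^e)$ is the identity. A square in which the two horizontal cofibers are identified via the canonical map is automatically a fiber product in $\widehat{dgCat^{(2)}}$; this is the standard fact that a map of fiber sequences which is an equivalence on the cofibers is cartesian. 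The \emph{bottom-left} square is handled symmetrically, exchanging the roles of rows and columns and of $Z_1$ with $Z_2$.

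Finally the \emph{bottom-right} square,
$$
\xymatrix{
\cF^{top}_{\infty}(U_{1,2}) \ar[r]^-{S_\infty} \ar[d]_-{S_\infty} & \cF^{top}_{\infty}(Z_1 \cap U_{1,2}) \ar[d]^-{S_\infty} \\
\cF^{top}_{\infty}(Z_2 \cap U_{1,2}) \ar[r]^-{S_\infty} & \cF^{top}_{\infty}(Z_{1,2}),
}
$$
is \emph{exactly} the situation of Lemma \ref{lem:local}: under the identifications recorded just before this lemma, $U_{1,2} = N_{Z_{1,2}}X \cong \Lambda(m_1, m_2)$, $Z_1 \cap U_{1,2} = N_{Z_{1,2}}Z_1$ and $Z_2 \cap U_{1,2} = N_{Z_{1,2}}Z_2$ are the two wheel-type subgraphs whose union is $\Lambda(m_1,m_2)$ and whose intersection is $Z_{1,2} \cong \Lambda(l_1, l_2)$ (a single circle, in the case we treat), and the four maps are the exceptional restrictions between them. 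Hence this square is a fiber product by Lemma \ref{lem:local}. This finishes the lemma, since each of the four interior squares is a fiber product.

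I expect the main obstacle to be the top-right (equivalently bottom-left) square: one must be careful that the good-closed-subgraph hypotheses genuinely hold for all the pairs involved (this is why the subdivision conventions and the "good" condition of Definition \ref{def:closed} were set up), and one must correctly identify the two open complements $U_1^o$ and $U_1^e$ together with the claim that the induced restriction between their Fukaya categories is the identity rather than merely an equivalence — it is here that the careful choice of $N_{Z_{1,2}}X$ and the notations $U_i^o$, $U_i^e$ pay off. The rest is assembling known fiber-product statements; the pasting lemma for homotopy cartesian squares then is not even needed, since we prove each interior square cartesian on its own.
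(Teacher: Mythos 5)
Your handling of the top-left square (via Proposition \ref{prop:open4}) and of the bottom-right square (via Lemma \ref{lem:local}) is exactly what the paper does. The problem is the top-right (and, symmetrically, bottom-left) square, where your argument has a genuine gap. There is first a bookkeeping error: the open complement of $Z_1$ in $U_1 = Z_1 \cup N_{Z_{1,2}}X$ is not $U_1^o = Z_1 - Z_{1,2}$ (that is a subset of $Z_1$, so it cannot be its complement); it is the set of spokes of the wheel $N_{Z_{1,2}}X$ pointing into $Z_2$, i.e.\ $U_2^e$, and likewise the complement of $Z_1 \cap U_{1,2}$ in $U_{1,2}$ is again $U_2^e$, not $U_1^e$. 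This part is repairable, since the two kernels do agree and the vertical restriction is the identity on them.

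The serious issue is the ``standard fact'' you invoke. The ambient category $\widehat{dgCat^{(2)}}$ is not stable, so one cannot test cartesianness of a square on the fibers of the horizontal arrows. A commutative square of presentable stable categories in which both horizontal arrows are Verdier quotients and the left vertical arrow restricts to an equivalence between their kernels need \emph{not} be cartesian: take $K = B = D = \mathrm{Sp}$, $g = \mathrm{id}$, $A = K \times B$ with $q$ the projection, $C = \mathrm{Fun}(\Delta^1, \mathrm{Sp})$ with $q' = \mathrm{ev}_1$, and $f(k,b) = (k \xrightarrow{\,0\,} b)$. This is a map of localization sequences inducing the identity on kernels and on quotients, yet $f$ is not fully faithful and $A \not\simeq C \simeq C \times_D B$. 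What is missing is the identification of the gluing data, e.g.\ the compatibility of the vertical arrows with the right adjoints of the quotient functors (a Beck--Chevalley condition); establishing precisely this is the actual content of the square. The paper supplies it by a different route: it writes each of the four corners of the square as a fiber product over an open cover (Proposition \ref{prop:open4} for the top corners, trivial diagrams for the bottom ones), expresses the four arrows as maps of these limit diagrams, and then commutes limits past limits, reducing to squares whose horizontal arrows are identities. You would need either to reproduce an argument of that kind or to verify the adjoint compatibility directly; as written, the key step is unjustified.
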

\begin{proof}
Number clockwise the interior squares from one to four, starting with the top left one.  Square $1$ is a fiber product by Proposition \ref{prop:open4}. Square $3$ is a fiber product by Lemma \ref{lem:local}. Up to swapping $U_1$ with $U_2$, squares $2$  and $4$ are identical. So it is enough to prove that square $2$ is a fiber product.  
The proof consists of three  steps.

{\bf Step one}: We express all the vertices of square $2$  as fiber products. We start with the top vertices. 
Each of the following two diagrams 
\begin{equation}
\label{fiber1}
\begin{gathered}
 \xymatrix{
\cF^{top}_\infty(U_1) \ar[r]^- {R_\infty} \ar[d] _-{R_\infty} & \cF^{top}_\infty(U_{1,2}  ) \ar[d]^-{R_\infty} \\ 
\cF^{top}_\infty(U_1^ o ) \ar[r]^ -{R_\infty} & 
\cF^{top}_\infty(U_1^ e)  
}
  \xymatrix{
\cF^{top}_\infty(Z_1) \ar[r]^-{R_\infty} \ar[d]_-{R_\infty} & \cF^{top}_\infty(Z_1 \cap U_{1, 2}) \ar[d]^ -{R_\infty} \\ 
\cF^{top}_\infty(U_1^ o) \ar[r]^ -{R_\infty} & 
\cF^{top}_\infty(U_1^ e).}  
\end{gathered}
\end{equation}
is a fiber product in $\mathrm{DGCat}^{(2)}_{\mathrm{cont}}$  
by Proposition \ref{prop:open4}. Let us consider the bottom vertices next. The diagrams 
\begin{equation}
\label{fiber2}
\begin{gathered}
 \xymatrix{
\cF^{top}_\infty(U_{1,2}) \ar[r]^- {R_\infty} \ar[d] _-{R_\infty} & \cF^{top}_\infty(U_{1,2}) \ar[d]^-{R_\infty} \\ 
\cF^{top}_\infty(U_1^ e ) \ar[r]^ -{R_\infty} & 
\cF^{top}_\infty(U_1^ e)  
}
  \xymatrix{
\cF^{top}_\infty(Z_1 \cap U_{1,2}) \ar[r]^-{R_\infty} \ar[d]_-{R_\infty} & \cF^{top}_\infty(Z_1 \cap U_{1,2}) \ar[d]^ -{R_\infty} \\ 
\cF^{top}_\infty(U_1^ e) \ar[r]^ -{R_\infty} & 
\cF^{top}_\infty(U_1^ e).}  
\end{gathered}
\end{equation}
are trivially fiber products in 
$\mathrm{DGCat}^{(2)}_{\mathrm{cont}}$ since the horizontal arrows are identities, and any two parallel arrows are equal to each other. 

{\bf Step two:} The arrows in square 2 can be written in terms of morphisms between the fiber product diagrams constructed in step one. Let us focus, for instance, on the bottom horizontal map in square 2 
$$
\cF^{top}_{\infty}(U_{1,2}) \stackrel{S_\infty}{\rightarrow} \cF^{top}_{\infty}(Z_1 \cap U_{1,2}). 
$$
This map, which is indicated in the diagram below by a dashed arrow, is induced by the map of diagrams in $\mathrm{DGCat}^{(2)}_{\mathrm{cont}}$ given by the three arrows in the middle: $S_\infty$, $Id$ and $Id$, 
$$
\xymatrix{
& \cF^{top}_\infty(U_{1,2}) \ar[d] \ar[r]^-{S_\infty} & \cF^{top}_\infty(Z_1 \cap U_{1, 2}) \ar[d] & \\ 
\cF^{top}_\infty(U_{1,2}) \ar@/^25pt/@{-->}[rrr] \ar[ur] \ar[dr] & \cF^{top}_\infty (U_{1}^ e) \ar[r]^-{Id} &  \cF^{top}_\infty(U_1^ e ) & \cF^{top}_\infty (Z_1 \cap U_{1,2}) \ar[ul] \ar[dl] \\
& \cF^{top}_\infty(U_1^e) \ar[u] \ar[r]^-{Id} & \cF^{top}_\infty(U_1^ e)  \ar[u]  & 
}
$$
A similar reasoning holds also  for  the other  arrows in square $2$. 

{\bf Step three:} We complete the proof by using the fact that limits commute with limits. We have to show that square 2, which we reproduce as  diagram (\ref{eq fibprodprod}), is a fiber product. 
\begin{equation}
\label{eq fibprodprod} 
\begin{gathered}
$$
 \xymatrix{
\cF^{top}_\infty(U_1) \ar[r]^-{R_\infty} \ar[d]_ -{S_\infty} & \cF^{top}_\infty(Z_1 ) \ar[d]^-{R_\infty} \\ 
\cF^{top}_\infty(U_{1,2} ) \ar[r]^ -{R_\infty} & 
\cF^{top}_\infty(Z_1 \cap U_{1,2}) 
}
$$
\end{gathered}
\end{equation}
We can commute (\ref{eq fibprodprod}) and the fiber products constructed in step one past each other: thus, in order to show that   (\ref{eq fibprodprod}) is a fiber product, we can prove instead that the following are fiber products 
\begin{equation}
\label{fiber3}
\begin{gathered}
\xymatrix{
\cF^{top}_\infty(U_{1,2}) \ar[r]^-{S_\infty} \ar[d]_{Id} & 
\cF^{top}_\infty (Z_1 \cap U_{1,2}) \ar[d]^ {Id} \\ 
\cF^{top}_\infty(U_{1,2}) \ar[r]^-{S_\infty} & 
\cF^{top}_\infty (Z_1 \cap U_{1,2}) 
}
\xymatrix{
\cF^{top}_\infty(U_1^ o) \ar[r]^{Id} \ar[d]_{R_\infty} & 
\cF^{top}_\infty (U_1^ o) \ar[d]^{R_\infty} \\ 
\cF^{top}_\infty(U_1^ e) \ar[r] ^{Id} & 
\cF^{top}_\infty (U_1^ e)  
}
\xymatrix{
\cF^{top}_\infty(U_1^ e) \ar[r] ^ {Id} \ar[d]_{Id} & 
\cF^{top}_\infty (U_1^ e) \ar[d]  ^{Id}\\ 
\cF^{top}_\infty(U_1^ e) \ar[r]^{Id} & 
\cF^{top}_\infty (U_1^ e)   
}
\end{gathered}
\end{equation}
These  diagrams have the property that the horizontal arrows are identities, and any two parallel arrows are equal to each other: so they are  fiber products. This concludes the proof.  
\end{proof}

\begin{proof}[The proof of Proposition \ref{gluing-main}]
Note first that the diagram from the statement of  Proposition \ref{gluing-main} is the exterior square of the diagram from Lemma \ref{gluing-lemma}. Indeed by Proposition 
\ref{prop:compr} 
$$
S_\infty \simeq  S_\infty \circ  R_\infty . 
$$
By general properties of fiber products, since all the interior squares are fiber products, the exterior one is a fiber  product as well. This concludes the proof. \end{proof}

\begin{remark} 
Although Theorem \ref{gluing-main} is sufficient for our purposes, we expect that 
gluing formulas under closed covers hold  more generally. The importance of this kind of gluing formulas lies in the fact that they are powerful computational tools, and that they often correspond via mirror symmetry to Zariski descent statement for quasi-coherent sheaves and matrix factorizations (see, for instance, the proof of Lemma \ref{lem:local}). 
We will return to the problem of developing a comprehensive formalism  of gluing formulas along closed subskeleta for $\cF^{top}$, in dimension two and higher, in future work. 
 \end{remark}


\section{Tropical and Surface topology}
\label{surface-topology}

\subsection{Surface topology}
\label{sec:surface-top}
This section contains some remarks on surface topology that will be useful in later constructions.

Denote by $\Sigma_{g,n}$ an oriented surface of genus $g$ with $n$ punctures. Since the topology of these surfaces enters our discussion in a relatively coarse way, we will often draw the punctures as if they were boundaries, but strictly speaking $\Sigma_{g,n}$ is a noncompact (if $n > 0$) manifold without boundary. The surface $\Sigma_{g,n}$ has $n$ \emph{ends} corresponding to the punctures.\footnote{An \emph{end} of a topological space $X$ is a function $\epsilon$ from the set of compact subsets of $K \subset X$ to subsets of $X$, such that $\epsilon(K)$ is a connected component of $X \setminus K$, and such that if $K_1 \subset K_2$, then $\epsilon(K_2) \subset \epsilon(K_1)$. Thus ends are intrinsic to the space $X$, and make sense without reference to a compactification.}

If $\Sigma_1$ and $\Sigma_2$ are two oriented punctured surfaces, we may form a new surface by the well-known \emph{end connect sum} operation.

\begin{definition}
  Choose a puncture $p_1$ on $\Sigma_1$ and a puncture $p_2$ on $\Sigma_2$. Identify a neighborhood of $p_1$ with $S^1\times (-1,-1/2)$ and a neighborhood of $p_2$ with $S^1\times (1/2,1)$, and replace the union of these neighborhoods by a single punctured cylinder $S^1 \times (-1,1) \setminus {(1,0)}$. The result $\Sigma_1 \#_{p_1,p_2} \Sigma_2$ is called the \emph{end connect sum of $\Sigma_1$ and $\Sigma_2$ at the punctures $p_1$ and $p_2$}.  
\end{definition}

The end connect sum can also be described as attaching a one-handle to $\Sigma_1 \coprod \Sigma_2$. If alternatively we think in terms of bordered surfaces, the operation consists of adding a strip connecting two boundary components. If $\Sigma_i$ has genus $g_i$ and $n_i$ punctures ($i = 1,2$), then $\Sigma \#_{p_1,p_2} \Sigma_2$ has genus $g_1+g_2$ and $n_1+n_2-1$ punctures.

The effect of end connect sum on skeleta is straightforward.

\begin{lemma}
  Let $X_i$ be a skeleton for $\Sigma_i$ ($i=1,2$). Produce from $X_i$ a ribbon graph with a noncompact edge connecting $X_i$ to the puncture $p_i$; call the result $X_i'$. Then a skeleton for $\Sigma_1\#_{p_1,p_2} \Sigma_2$ is obtained by connecting the noncompact edges of $X_1'$ and $X_2'$ inside the attaching region.
\end{lemma}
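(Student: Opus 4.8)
The statement is essentially a topological bookkeeping claim: the end connect sum surgery is performed inside a cylindrical neighborhood of the two punctures, and this region deformation retracts onto the noncompact edges that $X_i'$ extends into each puncture. So the plan is to track where the deformation retraction goes under the surgery. First I would fix the attaching neighborhoods: $S^1 \times (-1, -1/2)$ near $p_1$ and $S^1 \times (1/2, 1)$ near $p_2$, chosen small enough that each $X_i$ is disjoint from them, so that $X_i'$ meets the neighborhood of $p_i$ exactly in a single noncompact edge (a ray) running out to the puncture. Call these rays $r_1 \subset \Sigma_1$ and $r_2 \subset \Sigma_2$. After surgery, the union of the two neighborhoods is replaced by the once-punctured cylinder $C = S^1 \times (-1,1) \setminus \{(1,0)\}$, and the two rays $r_1, r_2$ now enter $C$ from opposite ends; joining them "inside the attaching region" means connecting them by an arc in $C$ to form a single arc $\gamma$ running from $X_1$ through $C$ to $X_2$.

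\textbf{Key steps.} The core of the argument is the claim that $C$ admits a strong deformation retraction onto the arc $\gamma \cap C$ together with a single loop: indeed $C = S^1 \times (-1,1) \setminus \{(1,0)\}$ is homotopy equivalent to a wedge $S^1 \vee S^1$ (a cylinder minus an interior point), and one can choose the retraction so that it restricts to the given retractions on the two boundary collars $S^1 \times (-1,-1/2)$ and $S^1 \times (1/2,1)$ where the surgery was trivial. I would build this retraction by hand: first retract $C$ onto the union of the core arc $\gamma \cap C$ (which we may take to be $(S^1 \setminus \{1\}) \times \{0\}$ reparametrized so its two ends match the two rays, plus a small tail) and one copy of $S^1 \times \{0\}$ pushed slightly, being careful near the puncture $(1,0)$, where the retraction pushes the deleted-point neighborhood around it. Then I would glue: on $\Sigma_1 \# \Sigma_2$ we already have strong deformation retractions of $\Sigma_i \setminus (\text{collar of } p_i)$ onto $X_i$, and these agree on the overlap (the boundary circles $S^1 \times \{-1/2\}$ and $S^1 \times \{1/2\}$) with the restriction of the $C$-retraction; pasting the retractions over the cover gives a strong deformation retraction of $\Sigma_1 \#_{p_1,p_2} \Sigma_2$ onto $X_1 \cup \gamma \cup X_2$, which is exactly the ribbon graph described in the lemma. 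Finally I would record that the ribbon structure (cyclic orderings at vertices) is inherited from the orientations of $\Sigma_1$ and $\Sigma_2$, which are compatible with the orientation of $\Sigma_1 \# \Sigma_2$ by construction of the end connect sum, so the resulting graph is genuinely a skeleton (a spine embedded as a deformation retract, with its induced ribbon structure).

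\textbf{Main obstacle.} The only delicate point is making the local deformation retraction of the once-punctured cylinder $C$ compatible with the pre-existing retractions on the two collar regions, i.e.\ getting a \emph{strong} deformation retraction rather than just a homotopy equivalence, and handling the behaviour near the new puncture $(1,0)$ so that the deleted point does not obstruct the retraction onto a graph that runs near it. This is routine but fiddly: one has to choose the connecting arc $\gamma \cap C$ to pass on the correct side of $(1,0)$ so that $C \setminus \gamma$ is an annulus (still retracting onto the loop), and then interpolate collar-by-collar. Everything else — the genus and puncture count, the ribbon structure — follows immediately from the description of the end connect sum already recorded in the text, so I would keep that part brief.
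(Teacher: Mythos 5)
The paper states this lemma without proof, so the only question is whether your argument is sound, and there is a genuine problem with its central step. You propose to strongly deformation retract the once-punctured cylinder $C$ onto $(\gamma\cap C)\cup L$, where $L$ is a loop, and then paste this with the retractions of the $\Sigma_i$ onto the $X_i$. If that pasting produced a deformation retraction at all, its image would be $X_1\cup\gamma\cup X_2\cup L$, not $X_1\cup\gamma\cup X_2$: a deformation retraction fixes its target pointwise, so the extra loop cannot disappear at the end. And $X_1\cup\gamma\cup X_2\cup L$ cannot be a spine: one has $\chi(\Sigma_1\#_{p_1,p_2}\Sigma_2)=\chi(\Sigma_1)+\chi(\Sigma_2)-1=\chi(X_1\cup\gamma\cup X_2)$, while attaching the extra loop drops the Euler characteristic by one more. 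The geometric point you are missing is that the homotopy type of $C$ (a figure eight) is not carried by the part of the skeleton lying inside $C$ (an arc, which is contractible): the meridian circle of the connecting tube is homotopic in $\Sigma_1\#\Sigma_2$ to the boundary cycle of the component of $\Sigma_1\setminus X_1$ at $p_1$, so any retraction onto $X_1\cup\gamma\cup X_2$ must push that circle out of $C$ and into $X_1$. Consequently no such retraction restricts to a self-map of $C$, and the compatibility you ask for on the circles $S^1\times\{-1/2\}$ and $S^1\times\{1/2\}$ is impossible: the retraction of $\Sigma_1$ onto $X_1$ sends that circle into $X_1$, while your $C$-retraction keeps it inside $C$. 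The strictly local pasting strategy therefore fails.

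A correct argument should not try to split the retraction along $C$ but should instead check complement components. Every component of $(\Sigma_1\#\Sigma_2)\setminus(X_1\cup\gamma\cup X_2)$ associated to a puncture other than $p_1,p_2$ is untouched by the surgery, hence is still a once-punctured disk. The one new component, containing the new puncture, is the union of the component of $\Sigma_1\setminus X_1$ at $p_1$ cut along $r_1$ (an open disk), the component of $\Sigma_2\setminus X_2$ at $p_2$ cut along $r_2$ (an open disk), and $C\setminus\gamma$ (a once-punctured disk, provided $\gamma$ is embedded and misses the new puncture, as you correctly arrange); these are glued along contractible overlaps, so by van Kampen the union is again a once-punctured disk whose puncture is an end of the surface. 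Since a properly embedded graph in a punctured surface is a spine exactly when all complement components have this form, the lemma follows. Your closing remarks about the ribbon structure being induced from the orientations are fine and can be kept.
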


\begin{example}
\label{example:tori-plus-sphere}
We can decompose $\Sigma_{g,n}$ ($n > 0$) into an iterated end connect sum of $\Sigma_{1,1}$ and $\Sigma_{0,2}$. Indeed, taking end connect sum of $g$ copies of $\Sigma_{1,1}$ (always summing at the unique punctures) yields a surface of type $\Sigma_{g,1}$. Taking end connect sum of $n-1$ copies of $\Sigma_{0,2}$ (summing at a single puncture of each) yields a surface of type $\Sigma_{0,n}$. Then end connect summing $\Sigma_{g,1}$ and $\Sigma_{0,n}$ yields $\Sigma_{g,n}$. By choosing skeleta for $\Sigma_{1,1}$ (consisting, say, of two loops on the torus) and for $\Sigma_{0,2}$ (say a single circle), we thus obtain a skeleton for $\Sigma_{g,n}$. This is pictured in Figure \ref{fig:example}.
\begin{figure}[h]
  \centering
  \includegraphics[width=0.8\textwidth]{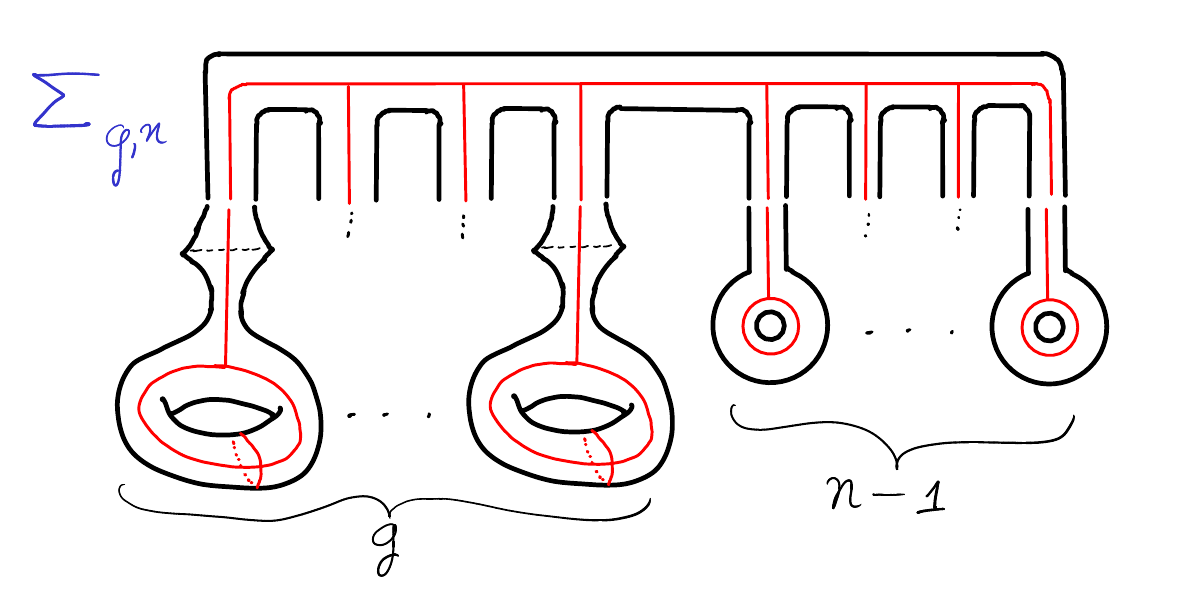}
  \caption{Decomposition of $\Sigma_{g,n}$ into end connect sum of $g$ copies of $\Sigma_{1,1}$ and $n-1$ copies of $\Sigma_{0,2}$}
  \label{fig:example}
\end{figure}
\end{example}

In this paper, we are interested in skeleta with a certain shape near the punctures.

\begin{definition}
  Let $\Sigma$ be a punctured surface, $p$ a puncture of $\Sigma$, and $X \subset \Sigma$ a skeleton for $\Sigma$. The component of $\Sigma \setminus X$ containing the puncture $p$ is topologically a punctured disk, and its boundary is a subgraph of $X$. We say that \emph{$X$ has a cycle at the puncture $p$} if this subgraph is a cycle.

If $p_1$ and $p_2$ are distinct punctures, we say that $X$ has \emph{disjoint cycles} at $p_1$ and $p_2$ if it has cycles at $p_1$ and $p_2$, and these cycles are disjoint in $X$.
\end{definition}

Model the pair of pants as $\bC - \{-2, 2\}$. If $x$ and $y$ are in $\bC$, and $\epsilon$ is a positive real number, we denote $S(x, \epsilon) \subset \bC$ the circle of center $x$ and radius $\epsilon$, and $I(x,y) \subset \bC$ the straight segment joining $x$ and $y$. We call \emph{$\Theta$ graph} the skeleton of the pair of pants given by 
$$
S(0, 3) \cup I(-3i, 3i). 
$$  
We call \emph{dumbell graph} the skeleton given by 
$$
S(-2, 1) \cup I(-1, 1) \cup S(2, 1). 
$$
The $\Theta$ graph has a cycle at each of the three punctures, whereas the dumbbell graph has a cycle at only two: for the third puncture, the boundary of the corresponding component of $\Sigma \setminus X$ consists of the entire skeleton. On the other hand, in the $\Theta$ graph the cycles for any two punctures are not disjoint, whereas in the dumbbell graph they are disjoint. These graphs are shown in Figure \ref{fig:popskeleta}.

\begin{figure}[h]
  \centering
  \includegraphics[width=0.8\textwidth]{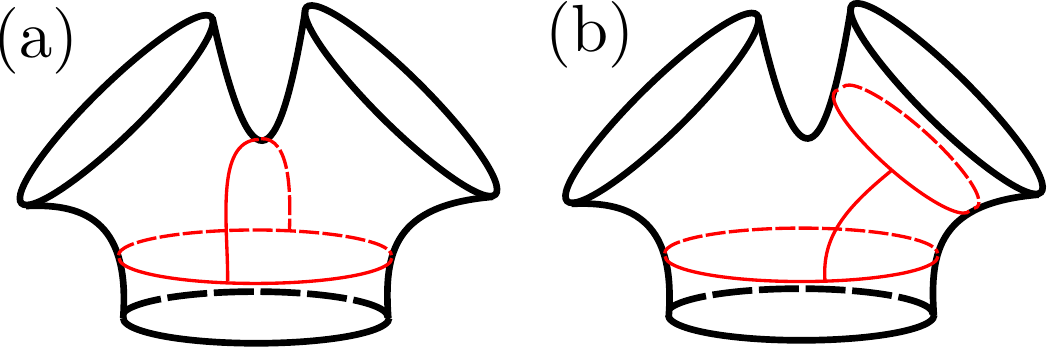}
  \caption{Two skeleta for the pair of pants. (a) $\Theta$ graph. (b) Dumbbell graph.}
  \label{fig:popskeleta}
\end{figure}

\begin{lemma}
\label{lem:circleatpuncture}
  $\Sigma_{g,n}$ admits a skeleton that has a cycle at every puncture but one.
\end{lemma}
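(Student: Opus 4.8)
The plan is to build the skeleton explicitly by induction on the genus and number of punctures, using the end connect sum decomposition of Example \ref{example:tori-plus-sphere} together with the Lemma describing the effect of end connect sum on skeleta. First I would treat the two base cases. For $\Sigma_{1,1}$ (genus one, one puncture) I would take the standard skeleton consisting of two loops; since there is only one puncture, the condition "a cycle at every puncture but one" is vacuous. For $\Sigma_{0,2}$ (the cylinder) I would take a single circle as skeleton; the cylinder has two punctures, and the single circle is a cycle at each, so in particular it has a cycle at one of them, which again suffices since we need a cycle at "every puncture but one."

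Next I would handle the general case $\Sigma_{g,n}$ with $n \geq 1$. Using Example \ref{example:tori-plus-sphere}, write $\Sigma_{g,n}$ as the end connect sum of $g$ copies of $\Sigma_{1,1}$ and $n-1$ copies of $\Sigma_{0,2}$. The key point is to perform the connect sums only at punctures where we are allowed to "lose" the cycle condition. Concretely: each $\Sigma_{1,1}$ has its unique puncture, and each $\Sigma_{0,2}$ has two punctures, at one of which (call it the "inner" one) we do not require a cycle. I would arrange the iterated connect sum as a tree (or chain): attach the $g$ tori in sequence and the $n-1$ cylinders in sequence, and glue the two chains together, always forming each end connect sum at a puncture that is permitted to be "bad." Since each end connect sum consumes one puncture from each of the two pieces and the total puncture count drops by one at each step, after all $g + (n-1)$ connect sums we are left with exactly $n$ punctures. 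By the Lemma on skeleta under end connect sum, the skeleton of $\Sigma_{g,n}$ is obtained by joining the noncompact edges of the constituent skeleta inside the attaching regions.

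The remaining task is to check that this assembled skeleton has a cycle at all but one of the $n$ surviving punctures. The punctures that were consumed in the connect sum operations are gone, so they impose no condition. Of the punctures that survive, I would argue that at each of them the boundary of the corresponding complementary disk is still a cycle, with exactly one exception — the one puncture that "absorbs" the global topology, playing the role that the third puncture plays for the dumbbell graph in Figure \ref{fig:popskeleta}. The main subtlety, and the step I expect to be the real obstacle, is the bookkeeping of which complementary region borders which subgraph after all the gluings: when a noncompact edge of one piece is joined to a noncompact edge of another, the complementary regions adjacent to those edges merge, and I must verify that the merged region's boundary remains a single cycle (for the "good" punctures) rather than picking up extra handles or vertices that would spoil the cycle property. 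I would carry this out by choosing the skeleta of the pieces carefully — e.g. using a circle-shaped skeleton near each puncture that is to remain "good," as in the $\Theta$ graph — and tracking the local picture in each attaching region, where the surgery only replaces two half-open edges by one arc and does not touch the cycles sitting at the good punctures. Once this local analysis is done, the global claim follows by induction on $g + n$.
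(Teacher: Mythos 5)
Your proposal is correct and is essentially the paper's own argument: the paper proves this lemma simply by citing Example \ref{example:tori-plus-sphere}, i.e.\ the iterated end connect sum of $g$ copies of $\Sigma_{1,1}$ and $n-1$ copies of $\Sigma_{0,2}$ with the loop/circle skeleta, performing each sum at a puncture where no cycle is required. The bookkeeping of complementary regions that you flag as the main subtlety is exactly what the paper's Figure \ref{fig:example} records, and your local analysis of the attaching region resolves it in the same way.
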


\begin{proof}
  This is furnished by Example \ref{example:tori-plus-sphere}.
\end{proof}

In fact, whenever $X$ is a skeleton with a cycle at a particular puncture, $\Sigma$ and $X$ can be decomposed into an end connect sum in a manner similar to that of Example \ref{example:tori-plus-sphere}. 

\begin{lemma}
\label{lem:puncture-decomposition}
Let $X$ be a ribbon graph for $\Sigma$ that has a cycle at the puncture $p$. Suppose that $r$ other edges are incident to the cycle at $p$. Then $\Sigma$ can be decomposed into an end connect sum of $\Sigma'$ and $\Sigma''$, where $\Sigma'' \cong \Sigma_{0,2}$, and $\Sigma'$ has one fewer puncture than $\Sigma$, and $X$ can be decomposed into the sum of $X'$ and $X''$, where $X'$ and $X''$ are ribbon graphs embedded in $\Sigma'$ and $\Sigma''$ respectively, each with $r$ noncompact edges approaching the punctures where the connect sum is taken, and $X$ is obtained by connecting the noncompact edges of $X'$ to those of $X''$. See Figure \ref{fig:end-connect-sum}.
\end{lemma}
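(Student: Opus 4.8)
The plan is to peel off from $(\Sigma,X)$ a neighbourhood of the closed once‑punctured disk $\overline{D}_p := D_p \cup C$, where $D_p$ is the component of $\Sigma\setminus X$ containing $p$ and $C=\partial\overline{D}_p$ is the cycle at $p$; the removed piece will be the $\Sigma''\cong\Sigma_{0,2}$ factor, and what remains will be $\Sigma'$. This is exactly the mechanism that makes the $\Sigma_{0,2}$‑summands visible in Example \ref{example:tori-plus-sphere}, and one should think of the present lemma as running that construction in reverse, one summand at a time.

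Concretely, I would first subdivide $X$ so that the only edges of $X$ meeting $C$ without lying in $C$ are the $r$ edges $e_1,\dots,e_r$, each leaving a vertex of $C$ along a single initial segment. Then choose a closed regular neighbourhood $N$ of $\overline{D}_p$ in $\Sigma$, small enough that $N$ is again a closed once‑punctured disk with puncture $p$ and boundary circle $\gamma:=\partial N$, that $\gamma$ is transverse to $X$ and meets it in exactly $r$ points, one in the interior of each $e_i$, and that $N\cap X = C\cup\bigcup_i(\text{initial segment of }e_i)$. Set $\Sigma'' := \mathrm{int}(N)$ and $\Sigma' := \Sigma\setminus N$. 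Then $\Sigma''$ is an open once‑punctured disk, hence homeomorphic to $\Sigma_{0,2}$, with ends $p$ and a second end $q''$ at $\gamma$; and $\Sigma'$ is the complementary open subsurface, with an end $q'$ at $\gamma$. Put $X'' := X\cap\Sigma'' = C\cup\bigcup_i(e_i\cap\Sigma'')$, in which the $r$ arcs $e_i\cap\Sigma''$ are noncompact edges running out to $q''$, and $X' := X\cap\Sigma'$, which is $X$ with $C$ and the $r$ initial segments deleted together with the $r$ noncompact edges $e_i\cap\Sigma'$ running out to $q'$; each carries the ribbon structure induced by its embedding. Since $\Sigma = N\cup_\gamma(\Sigma\setminus N)$, comparing with the definition of the end connect sum gives $\Sigma\cong\Sigma'\#_{q',q''}\Sigma''$, and the same argument used above for skeleta under end connect sum shows that $X$ is recovered by joining the $r$ noncompact edges of $X'$ to those of $X''$ inside the attaching region, reassembling $e_1,\dots,e_r$; see Figure \ref{fig:end-connect-sum}. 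Finally $\gamma$ bounds the disk $N$, so no handle is cut and $\Sigma'$ has genus $g$; and from the count $n = n(\Sigma')+n(\Sigma'')-1 = n(\Sigma')+1$ we get that $\Sigma'$ has exactly one fewer puncture than $\Sigma$.

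The step requiring the most care is the identification $\Sigma\cong\Sigma'\#_{q',q''}\Sigma''$, because the end connect sum of the excerpt inserts one puncture in the neck, whereas the naive operation of regluing two surfaces along $\gamma$ does not; the two must be matched. The way to do this is to take the attaching region to be a punctured‑cylinder neighbourhood of a complementary face of $X$ that borders $C$ from the side away from $D_p$ — such a face exists because every edge of $C$ has $D_p$ on one side and a different face on the other — so that the puncture of $\Sigma$ carried by that face plays the role of the neck puncture, and $\Sigma'$ is then the subsurface missing both $p$ and this puncture (hence indeed has $n-1$ punctures after adding back the end at $q'$). The $r$ arcs $e_i$ run through this neck without creating any further punctures, so the bookkeeping of the previous paragraph is exactly that of the definition; a further mild subdivision of $X$ near $C$ lets one take the closure of that face inside the attaching region, so that $X'$ and $X''$ have precisely the asserted $r$ noncompact edges and nothing more. (One should also note the evident degenerate case $\Sigma\cong\Sigma_{0,2}$ separately, or simply exclude it.)
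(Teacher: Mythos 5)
Your strategy is, at bottom, the same as the paper's: the $\Sigma_{0,2}$--summand is a neighbourhood of the closed punctured disk $\overline{D}_p$, and the neck of the end connect sum has to be routed through a complementary face $F$ bordering the cycle $C$ from outside, so that the puncture $q$ of $F$ plays the role of the puncture carried by the attaching cylinder. You also correctly isolate the one real subtlety (cutting along $\partial N$ alone cannot produce an end connect sum, since the neck of an end connect sum contains a puncture; your paragraph-two identification $\Sigma\cong\Sigma'\#_{q',q''}\Sigma''$ is off by one puncture on both sides). The gap is in how you repair this. Taking the attaching region to be ``a punctured-cylinder neighbourhood of the face $F$'' does not work. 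First, such a region need not contain a full annulus around $\partial N$: the edges $e_1,\dots,e_r$ cut the collar just outside $C$ into $r$ sectors, and $F$ may meet that collar in only one of them, whereas the neck must be glued to $\Sigma''$ along an entire annular neighbourhood of its end $q''$. Second, and more seriously, any region containing $\overline{F}$ contains the whole boundary walk of $F$ in $X$ --- edges of $C$, the $e_i$ bounding its sector(s), and in general further vertices and edges of $X$ --- so it meets $X$ in far more than the $r$ arcs that the attaching region of an end connect sum is allowed to contain. Subdividing $X$ cannot fix this, since subdivision does not move edges out of $\overline{F}$; it only adds vertices to them.

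What is actually needed, and what the paper's proof supplies, is not the face $F$ itself but an embedded path $\gamma$ from $p$ (equivalently, from a point of $C$) to the puncture $q$ meeting $X$ only in the cycle at $p$; such a path exists precisely because $F$ is a punctured disk. The neck is then the region between $\partial N$ and the boundary of a regular neighbourhood of $N\cup\gamma$: this is an embedded annulus containing $q$, hence a punctured cylinder after deleting $q$; one of its circle ends is a full annulus around $\partial N$; and it meets $X$ in exactly the $r$ arcs $e_i\cap(\text{neck})$, because $\gamma$ and a small disk about $q$ are disjoint from $X$. With this replacement for your attaching region, the bookkeeping in your second paragraph goes through verbatim, and the remark following the lemma in the paper --- that there are essentially $r$ choices of decomposition --- becomes the choice of which of the $r$ sectors around $C$ the path $\gamma$ exits through.
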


\begin{figure}[h]
  \centering
  \includegraphics[width=0.8\textwidth]{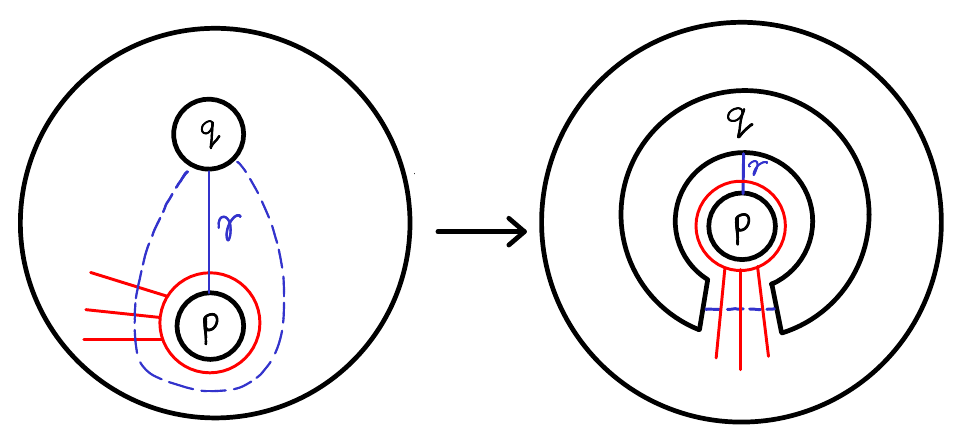}
  \caption{Decomposition of $\Sigma$ into an end connect sum, depending on a choice of path $\gamma$ between two punctures. The dashed line is for visual reference.}
  \label{fig:end-connect-sum}
\end{figure}

\begin{proof}
  The idea is to deform our picture of $\Sigma$ so that the cycle at $p$ is pulled out at another puncture $q$ of $\Sigma$. To do this, what is needed is a path $\gamma$ in $\Sigma$ from $p$ to $q$ that does not cross any other points of the skeleton $X$. But this is always possible, since every component of $\Sigma \setminus X$ is homeomorphic to a punctured disk.
\end{proof}

We remark that the proof shows that if $r$ edges are incident to the cycle at $p$, then there are essentially $r$ choices for how to decompose $\Sigma$ and $X$ as in the lemma.

\begin{lemma}
\label{lem:puncture-mod}
\begin{enumerate}
\item Let $X_1$ and $X_2$ be two ribbon graphs for $\Sigma$ that both have cycles at the puncture $p$. Then it is possible to connect $X_1$ to $X_2$ by a sequence of contractions and expansions so that every intermediate graph also has a cycle at $p$.
\item Let $\Sigma$ be a surface with at least three punctures. Let $p$ be a puncture of $\Sigma$, and let $X$  be a skeleton for $\Sigma$ that has a cycle at $p$. Let $p'$ be another puncture of $X$. It is possible to modify $X$ to $X'$ so that $X'$ also has cycles at $p$ and $p'$, and so that every intermediate graph also has a cycle at $p$. 
\end{enumerate}
\end{lemma}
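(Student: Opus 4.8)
The plan is to prove both parts by reducing to local moves on ribbon graphs, using the fact (from \cite{DK}) that any two skeleta of $\Sigma$ are connected by a sequence of elementary contractions and expansions of internal edges, together with the end-connect-sum decomposition of Lemma \ref{lem:puncture-decomposition}. The key observation is that the property ``$X$ has a cycle at $p$'' is local near $p$: it only depends on the component $D_p$ of $\Sigma\setminus X$ containing $p$ and its boundary cycle $C_p$. Elementary moves supported away from $C_p$ and its incident edges do not affect this property, so the only moves we need to worry about are those that touch $C_p$.

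For part (1), I would proceed as follows. By Lemma \ref{lem:puncture-decomposition} applied to each of $X_1$ and $X_2$, we may write $\Sigma = \Sigma' \#_{q,*}\Sigma_{0,2}$ in two ways, with the $\Sigma_{0,2}$ piece carrying the cycle at $p$ and a collection of $r_i$ noncompact edges, and with the rest of $X_i$ living on $\Sigma'$; the remark after that lemma says the choice is governed by which of the $r_i$ incident edges we ``pull through'' to an auxiliary puncture $q$. First, fix a single auxiliary puncture $q \ne p$ and a single choice of pulled-through edge, and use moves supported in $\Sigma'$ (none of which touch $C_p$) to bring the two $\Sigma'$-parts into a common form; here one invokes the $\Sigma'$-version of the statement inductively, or more simply the Dyckerhoff--Kapranov connectivity of skeleta applied to $\Sigma'$, noting every intermediate graph on $\Sigma'$ still has the cycle at $p$ sitting untouched in the $\Sigma_{0,2}$ neck. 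Then one must reconcile the two decompositions themselves, i.e. change which edge is pulled through and which auxiliary puncture is used: this is done by a finite sequence of ``finger moves'' that slide the cycle $C_p$ past a trivalent vertex along one of its incident edges. Each such finger move is an explicit composite of one expansion and one contraction, and by construction the cycle persists (it is merely transported), so every intermediate graph has a cycle at $p$. Concatenating, we connect $X_1$ to $X_2$ through graphs that all have a cycle at $p$.

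For part (2), the idea is to create a cycle at $p'$ without ever destroying the one at $p$. Using Lemma \ref{lem:puncture-decomposition} at $p$, split off a $\Sigma_{0,2}$ neck carrying $C_p$, so that $C_p$ is ``parked'' and all further modifications take place on $\Sigma'$, which still contains the puncture $p'$ and has at least two punctures (since $\Sigma$ has at least three). On $\Sigma'$ one wants a skeleton with a cycle at $p'$; such a skeleton exists by Lemma \ref{lem:circleatpuncture} (choosing the labelling so that $p'$ is not ``the one'' without a cycle, which is possible as $\Sigma'$ has $\ge 2$ punctures), and by the Dyckerhoff--Kapranov connectivity we can move from the given skeleton of $\Sigma'$ to such a target through intermediate skeleta of $\Sigma'$. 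Re-attaching the parked neck throughout, every intermediate graph still has a cycle at $p$; the final one additionally has a cycle at $p'$. One small point to check: when we re-attach, the noncompact edges of the neck connect to $X'$ somewhere, and we should verify that a generic such attachment does not run into the $p'$-cycle --- but since the attachment region can be chosen in any prescribed complementary disk of $X'$ (the remark after Lemma \ref{lem:puncture-decomposition}), we simply choose it away from the $p'$-cycle.

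The main obstacle, and the step deserving the most care, is the ``finger move'' in part (1): one must check explicitly that sliding the boundary cycle $C_p$ across a trivalent vertex --- which is forced when the two decompositions pull through different incident edges of $C_p$ --- can be realized as a legal sequence of contractions and expansions of \emph{internal} edges of a ribbon graph embedded in $\Sigma$, with every intermediate graph still a valid skeleton (strong deformation retract) having a cycle at $p$. This is a purely local picture in a disk neighborhood of the vertex, so it is elementary, but it is where the combinatorics of the ribbon structure (the cyclic orderings at the vertices involved) genuinely enters, and it is the only place where some case analysis on valencies is unavoidable.
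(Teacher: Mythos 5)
Your overall architecture matches the paper's: isolate the cycle at $p$ inside a $\Sigma_{0,2}$ neck via the end-connect-sum decomposition of Lemma \ref{lem:puncture-decomposition}, and then invoke the connectivity of the space of ribbon graphs of the complementary piece $\Sigma'$ (Harer's theorem, as cited in \cite{DK}). Your part (2) is also workable, though more roundabout than needed: since $\Sigma$ has at least three punctures, a skeleton $X'$ with cycles at both $p$ and $p'$ simply exists, and one applies part (1) directly to connect $X$ to $X'$.

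The gap sits exactly at the step you yourself flag as ``the main obstacle.'' You apply Lemma \ref{lem:puncture-decomposition} to $X_1$ and $X_2$ separately, producing two decompositions of $\Sigma$ governed by two different paths from $p$, and propose to reconcile them by local ``finger moves'' sliding $C_p$ past a trivalent vertex. But the discrepancy between the two decompositions is not merely which incident edge is pulled through or which auxiliary puncture is used: a path $\gamma$ chosen to meet $X_1$ only in its cycle at $p$ will in general cross $X_2$ transversally in some number $k$ of edges located anywhere in the surface. With respect to the single decomposition adapted to $X_1$, the piece of $X_2$ lying in the neck $\Sigma''$ is then a cycle together with $k$ parallel arcs, attached to the rest of $X_2$ by $2k+1$ noncompact edges. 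The real content of the argument is to push these $k$ arcs out of the neck one at a time --- moving an endpoint of the outermost arc along the boundary of the complementary punctured disk it cobounds, through $\Sigma''$ and back into $\Sigma'$, by contractions and expansions that never touch the cycle at $p$. Your purely local ``slide past one vertex'' picture does not engage with this global intersection count, and you explicitly leave its verification open. You also omit the preliminary normalization needed for the endgame: first contract all $r$ edges incident to the cycle at $p$ into a single vertex of valence $r+2$ and re-expand, so that in each of $X_1$ and $X_2$ only one edge is incident to the cycle; only then does the case $k=0$ reduce to Harer's connectivity statement for skeleta of $\Sigma'$ with a single noncompact edge.
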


\begin{proof}
  \begin{enumerate}
  \item First, if there is more than one edge incident to the cycle at $p$ in $X_1$ or $X_2$, we can apply contractions to gather together all of these edges into a single vertex of valence $r+2$, and then apply a single expansion to ensure that in both $X_1$ and $X_2$ only a single edge is incident to the cycle at $p$. None of these moves destroy the cycle at $p$ in $X_1$ or $X_2$.
    
    Let $p' \neq p$ be another puncture. Choose a path $\gamma$ from $p$ to $p'$. As in Lemma \ref{lem:puncture-decomposition}, we may assume that $\gamma$ only crosses $X_1$ at the cycle at $p$. Once this choice is made, we cannot assume the same holds true for $X_2$, so $\gamma$ will cross $X_2$ at some number of edges not contained in the cycle at $p$; let $k$ be this number. Now we apply the idea of stretching the surface from \ref{lem:puncture-decomposition}, using the chosen path $\gamma$. This decomposes $\Sigma$ into an end connect sum of $\Sigma'$ and $\Sigma''$, where $\Sigma''$ is has genus 0 and 2 punctures in such a way that the cycle at $p$ ends up in the $\Sigma''$ factor. See Figure \ref{fig:stretch}(a). 
    
    Now we consider how $X_1$ and $X_2$ look with respect to this decomposition. Since the path $\gamma$ only intersects $X_1$ at the cycle at $p$, $X_1$ decomposes just as in Lemma \ref{lem:puncture-decomposition}. On the other hand, $X_2$ is as shown in Figure \ref{fig:stretch}(a). The part of $X_2$ that ends up in $\Sigma''$ consists of a cycle at $p$ together with $k$ parallel arcs. This is connected to the rest of $X_2$ via $2k+1$ noncompact edges.

    The next step is to apply moves to $X_2$ that move the $k$ arcs out of $\Sigma''$ and into $\Sigma'$. Observe that the space between two neighboring arcs corresponds, in the summed surface $\Sigma$, to a component of $\Sigma \setminus X_2$, which is a punctured disk. Start with the outermost arc, call it $a$. Let $D$ denote the punctured disk corresponding to the region just inside $a$, so $D$ is a punctured disk. The arc $a$ ends at two vertices in $\Sigma'$. By a sequence of contractions and expansions, we may move one of the ends along the boundary of $D$, through $\Sigma''$, and back into $\Sigma'$. We can also follow the disk $D$ throughout this process. (Depending on how it is done, the puncture of $D$ may also move through $\Sigma''$.) This is depicted in Figure \ref{fig:stretch}(b). Since none of these moves destroy the cycle at $p$, this reduces us to the situation where $k = 0$. 
    
    In the case $k = 0$, we have decompositions of $X_1$ and $X_2$ into end connect sums of $X_1'$ and $X'_2$ in $\Sigma'$, each having a single noncompact edge, and $X''$ in $\Sigma''$ consisting of a single cycle with a single noncomapact edge. Now we apply the fact that any two ribbon graphs for $\Sigma'$ with a single noncompact edge asymptotic to a puncture can be connected by a sequence of moves, by a result of Harer \cite[Proposition 3.3.9]{DK}. Evidently, such moves do not destroy the cycle at the puncture in $\Sigma''$, so we are done. 
    
  \item Since the surface $\Sigma$ has at least three punctures, there is a ribbon graph $X'$ that has cycles at both $p$ and $p'$. Now apply the first part of the lemma.
  \end{enumerate}
  \begin{figure}[h]
  \centering
  \includegraphics[width=0.75\textwidth]{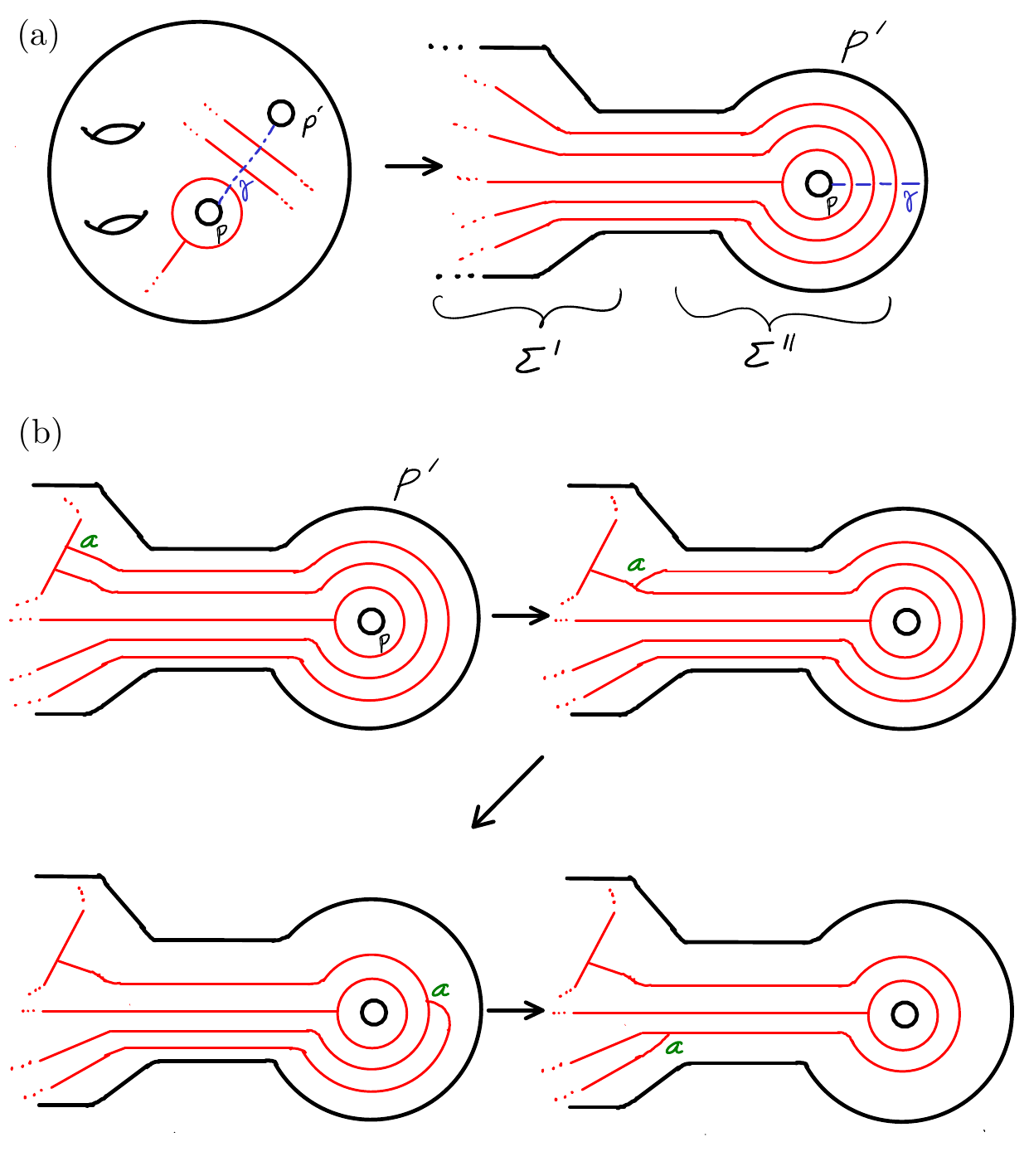}
  \caption{(a) Decomposition of surface into end connect sum, and corresponding decomposition of ribbon graph. (b) Moving an arc through $\Sigma''$. The point marked $a$ is the end of the arc that is being moved.}
  \label{fig:stretch}
\end{figure}
\end{proof}

\subsection{Tropical topology}
\label{sec:tropical-top}

Since our strategy is to prove HMS inductively by gluing together pairs of pants, and the gluings are controlled by a balanced tropical graph $G_{\mathcal{T}}$ associated to the given toric Calabi-Yau Landau-Ginzburg model $(X_{\mathcal{T}},W_{\mathcal{T}})$, we collect here some elementary remarks about the topology of such graphs that will be useful. The main point is to keep track of the non-compact edges of $G$, since these are edges where we \emph{never} need to glue in our induction; we also point out that $G$ can be built up in such a way that we never need to glue along all the edges incident to a single vertex.

Let $G$ be trivalent graph with both finite and infinite edges. For each edge $e$, we have an \emph{orientation line} $\det(e)$ that is the $\bZ$-module generated by the two orientations of $e$ modulo the relation that their sum is zero. A  \emph{(planar integral) momentum vector} $p_e$ on the edge $e$ is a linear map $p_e : \det(e) \to \bZ^2$.
\begin{definition}
  A pair $(G,\{p_e\}_{e\in \Edge(G)})$ consiting of a graph and a set of momenta is a \emph{balanced tropical graph} if momentum is conserved at each vertex. Namely, for each vertex $v$ of $G$, 
  \begin{equation}
    \sum_{e} p_e(\text{inward orientation}) = 0
  \end{equation}
  where the sum is over all edges $e$ incident to $v$, and the inward orientation is the one pointing toward $v$.

  Such a graph is additionally called \emph{nondegenerate} if the values of the momenta at each vertex span $\bZ^2$, or equivalently if not all momenta at a given vertex are proportional.
\end{definition}

\begin{definition}
  A \emph{planar immersion} of $(G,\{p_e\}_{e\in \Edge(G)})$ is a continuous map $i : G \to \bR^2$, such that derivative of $i$ along an edge $e$ in the direction $o$ is positively proportional to the momentum $p_e(o)$. Note that we do not require $i$ to be proper on infinite edges.
\end{definition}

From now on we will consider nondegenerate balanced tropical graphs $(G,\{p_e\}_{e\in \Edge(G)})$ with planar immersion $i$. Planar immersions of balanced tropical graphs are in some sense ``harmonic,'' so it is not surprising that they satisfy a version of the maximum principle:

\begin{lemma}
  \label{lem:atleasttwo}
  $G$ has at least two infinite edges.
\end{lemma}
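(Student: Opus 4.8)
The plan is to prove a maximum principle for the immersion in a generic direction. Choose a linear functional $\ell\colon \bR^2 \to \bR$ that is nonzero on every one of the finitely many momentum vectors $p_e(o)$ occurring on $G$; this is a generic condition, and it is also where nondegeneracy is used, since at a trivalent vertex $v$ the balancing relation forces the three outward momenta to sum to zero, so if the spanning hypothesis holds then none of them can vanish. Put $h := \ell\circ i\colon G \to \bR$. By the defining property of a planar immersion, along any edge $e$ oriented in a direction $o$ the derivative of $h$ is a \emph{positive} multiple of $\ell(p_e(o))\neq 0$; hence $h$ is strictly monotone along every edge of $G$.

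Next I would record the local picture at a vertex. Momentum conservation at a trivalent vertex $v$ reads $\sum_e p_e(o_e^{\mathrm{in}})=0$, where the sum is over the three incident edges; reversing orientations, the three \emph{outward} momenta also sum to zero, so their $\ell$-values sum to zero. As none of these values vanishes, at least one is strictly positive and at least one strictly negative. Thus every vertex has at least one incident \emph{ascending} edge (along which $h$ increases as one moves away from $v$) and at least one \emph{descending} edge. Now run the path argument: starting from any vertex $v_0$ of $G$ — one exists, since an infinite edge is incident to a vertex — repeatedly follow an ascending edge, producing $v_0,v_1,v_2,\dots$ with $h(v_0)<h(v_1)<h(v_2)<\cdots$. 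Strict monotonicity forces all $v_i$ to be distinct, so since $G$ has finitely many vertices the process terminates; and it can terminate only when the ascending edge we are told to follow is an infinite edge, since at a finite trivalent vertex there is always an ascending edge other than the one just traversed. This produces an infinite edge $e^{+}$ with $\ell\big(p_{e^{+}}(o)\big)>0$, $o$ pointing away from the finite endpoint. Running the same argument with descending edges yields an infinite edge $e^{-}$ with $\ell\big(p_{e^{-}}(o)\big)<0$ for the analogous orientation. Since these two $\ell$-values have opposite signs, $e^{+}\neq e^{-}$, so $G$ has at least two infinite edges. (If $G$ is disconnected, apply the argument to the component containing a given infinite edge.)

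The only step requiring genuine care is this termination-and-exit point: one must check that the ascending path can never revisit a vertex — immediate from strict monotonicity of $h$ along it — hence has finite length, and that it can halt only by escaping along an infinite edge rather than by getting stuck at a finite vertex. Everything else follows directly from the balancing condition together with the genericity of $\ell$; in particular strict monotonicity of $h$ on edges and the ``not all the same sign'' dichotomy at each vertex are exactly the harmonicity input behind the maximum principle.
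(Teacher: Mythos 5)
Your proof is correct and follows essentially the same route as the paper's: a generic linear projection composed with the immersion, together with the observation that the balancing condition forces every vertex to have both an ascending and a descending incident edge, so the function can attain neither its supremum nor its infimum at a vertex. The only difference is bookkeeping — you make the "supremum is approached but not attained" step explicit via a strictly monotone path that must exit along an infinite edge in each direction, which is a slightly more careful rendering of the same maximum-principle argument.
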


\begin{proof}
  Let $i : G \to \bR^2$ be a planar immersion, and let $\pi : \bR^2 \to \bR$ be the orthgonal projection onto any given line of irrational slope. Then consider the function $\pi \circ i : G \to \bR$. Nondegeneracy implies that $p_e \neq 0$ for any $e$, so $\pi \circ i$ is not constant on any edge. 

We claim that $\pi \circ i$ does not achieve its supremum. For if it did, this would necessarily occur at a vertex, as $\pi \circ i$ is linear and nonconstant on all edges. At the vertex, the images under $i$ of all incident edges lie in the same half-plane determined by the linear function $\pi$. This is not compatible with the balancing condition, since three non-zero vectors in the same half-plane cannot sum to zero.

The same reasoning applied to $- \pi \circ i$ shows that $\pi \circ i$ does not achieve its infimum. Therefore there must be two infinite edges on which the supremum and infinimum are approached but not obtained.
\end{proof}

Now let $e_0$ be an infinite edge of $G$; it is incident to a vertex $v_0$, and there are three possibilities for the local structure of $G$ at $v_0$:
\begin{enumerate}
\item\label{case:oneinfinite} $v_0$ is incident to one infinite edge, namely $e_0$.
\item\label{case:twoinfinite} $v_0$ is incident to two infinite edges, namely $e_0$ and one other $e_1$.
\item $v_0$ is incident to three infinite edges. Then $v_0$ and these three edges comprise a connected component of $G$.
\end{enumerate}
See Figure \ref{fig:vertexcases}.
\begin{figure}[h]
  \centering
  \includegraphics[width=0.7\textwidth]{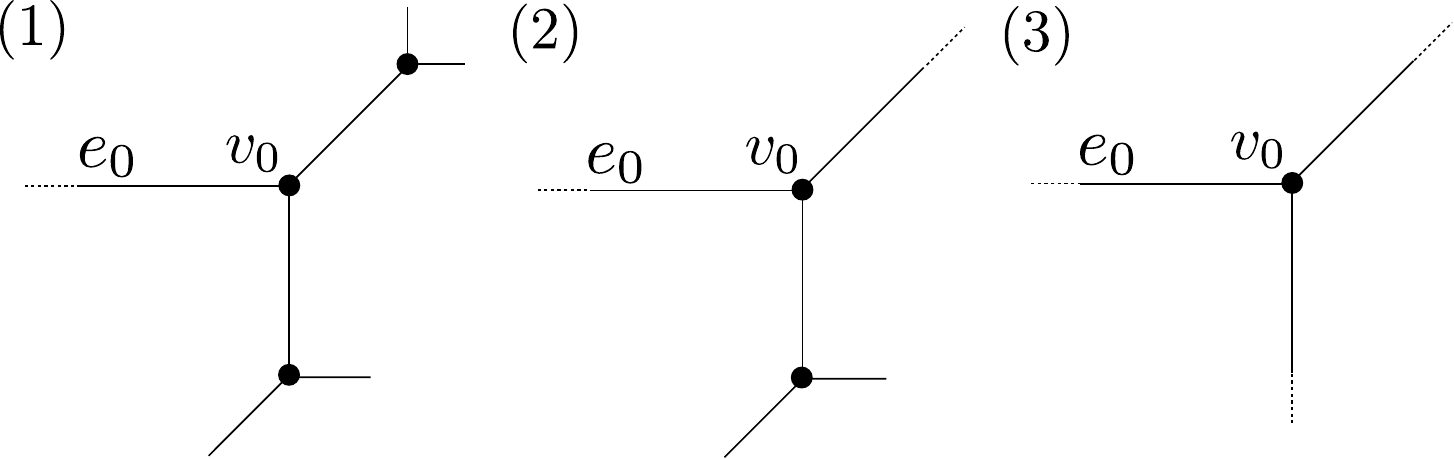}
  \caption{Cases at a vertex.}
  \label{fig:vertexcases}
\end{figure}

\begin{lemma}
\label{lem:gprime}
  In case \ref{case:oneinfinite}, let $G'$ be the graph obtained from $G$ by deleting $e_0$ and $v_0$. In case \ref{case:twoinfinite}, let $G'$ be the graph obtained from $G$ by deleting $e_0, e_1$ and $v_0$. Then $G'$ has an infinite edge not originally incident to $v_0$ (in $G$).
\end{lemma}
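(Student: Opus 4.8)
The plan is to reduce the whole statement to a single claim: \emph{$G$ contains an infinite edge that is not incident to $v_0$}. Suppose $e'$ is such an edge. Then $e'$ is distinct from $e_0$ in case \ref{case:oneinfinite}, and from both $e_0$ and $e_1$ in case \ref{case:twoinfinite} (all of those are edges at $v_0$), so $e'$ is not among the edges deleted in forming $G'$; and since $e'$ is not incident to the deleted vertex $v_0$, it survives unchanged in $G'$, still as an infinite edge, and it was not originally incident to $v_0$. So it is enough to produce an infinite edge of $G$ not incident to $v_0$.

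In case \ref{case:oneinfinite} this is immediate. Here $v_0$ is trivalent with one infinite edge $e_0$ and two finite edges, so the connected component of $G$ containing $v_0$ has both finite and infinite edges; Lemma \ref{lem:atleasttwo} applied to that component yields an infinite edge $e' \neq e_0$, and being distinct from $e_0$ it cannot be incident to $v_0$.

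Case \ref{case:twoinfinite} is the real content, since a priori $e_0$ and $e_1$ could be \emph{all} the infinite edges of $G$, and we must rule this out. The idea is to pass to the smaller graph $G' = G \setminus \{e_0, e_1, v_0\}$ and reapply the maximum principle. Note that $G'$ carries the restricted planar immersion $i$, and the balancing and nondegeneracy conditions hold at every vertex of $G'$ (these are precisely the vertices of $G$ other than $v_0$, each retaining all three of its incident edges, since those edges are not incident to $v_0$). The third edge $e_2$ at $v_0$, which is finite, becomes an infinite edge of $G'$ asymptotic to its other endpoint $v_1$, and $v_1 \neq v_0$ because $e_2$ is not a loop. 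Let $C$ be the connected component of $G'$ containing $v_1$. If $C$ has a finite edge, then Lemma \ref{lem:atleasttwo} applies to $C$ and gives two infinite edges of $C$, hence one, say $e'$, different from $e_2$. If $C$ has no finite edge, then, being connected (any two distinct vertices would be joined by a path using a finite edge) and trivalent, $C$ consists of $v_1$ alone together with three infinite edges, and we take $e'$ to be one of the two distinct from $e_2$. In either case $e'$ is an infinite edge of $G'$ with $e' \neq e_2$; since $e_2$ is the only edge of $G'$ that was incident to $v_0$ in $G$, the edge $e'$ is not incident to $v_0$, which is exactly what the reduction needs.

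The main obstacle is this case \ref{case:twoinfinite} step; the point is that deleting $v_0$ together with its infinite legs leaves a genuinely smaller balanced tropical graph to which Lemma \ref{lem:atleasttwo} can be fed again. An alternative, fully self-contained argument for this step is a direct maximum-principle computation in the spirit of the proof of Lemma \ref{lem:atleasttwo}: if $e_0$ and $e_1$ were the only infinite edges, then the function $\pi \circ i$, with $\pi$ the orthogonal projection to a line of irrational slope (so that $\pi(p_e) \neq 0$ for every edge $e$), attains a maximum and a minimum on the compact graph obtained from $G$ by removing the open legs $e_0$ and $e_1$, each at a vertex; these two vertices cannot both be $v_0$, since $v_0$ has only the single edge $e_2$ in this compact graph; but at any extremal vertex $w \neq v_0$ all three incident edges lie in the compact graph and their outgoing momenta all have $\pi$-value of one sign, contradicting $\sum_e p_e = 0$.
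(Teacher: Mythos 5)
Your proof is correct, and for case \ref{case:twoinfinite} it is essentially the paper's argument: pass to $G'$, observe that it is still a nondegenerate balanced tropical graph with a planar immersion in which $e_2$ has become infinite, apply Lemma \ref{lem:atleasttwo}, and note that $e_2$ is the only edge of $G'$ originally incident to $v_0$ (your case split on whether the component of $v_1$ contains a finite edge is unnecessary, since Lemma \ref{lem:atleasttwo} applies either way, and the parenthetical ``since those edges are not incident to $v_0$'' is slightly off for $e_2$, but neither affects the argument). Where you genuinely diverge is case \ref{case:oneinfinite}: you apply Lemma \ref{lem:atleasttwo} directly to $G$ and observe that any infinite edge other than $e_0$ is automatically not incident to $v_0$ (since $v_0$'s other two edges are finite) and survives untouched in $G'$. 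The paper instead works inside $G'$, where the two finite edges at $v_0$ become infinite, and must choose the projection $\pi$ so that both of these new infinite edges point into the half-plane above $i(v_0)$ --- hence are decreasing in the noncompact direction and cannot carry the supremum --- which implicitly also uses nondegeneracy at $v_0$ to guarantee such a $\pi$ exists. Your version of case \ref{case:oneinfinite} is shorter and avoids that choice entirely; the paper's version has the mild virtue of producing the edge inside $G'$ by the same mechanism in both cases. Your closing ``alternative'' maximum-principle argument for case \ref{case:twoinfinite} is also sound, though the step ``these two vertices cannot both be $v_0$'' is better justified by noting that max $=$ min would force $\pi\circ i$ to be constant on the compact core, contradicting nonconstancy on $e_2$.
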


\begin{proof}
  In case \ref{case:twoinfinite}, this follows from Lemma \ref{lem:atleasttwo}, as $G'$ must have two infinite edges, and only one is incident to $v_0$.

In case \ref{case:oneinfinite}, let $e_1$ and $e_2$ be the other edges incident to $v_0$; these become infinite edges in $G'$. Let $\pi : \bR^2 \to \bR$ be orthgonal projection onto an irrational line chosen so that both $i(e_1)$ and $i(e_2)$ lie in the half-plane defined by the inequality $\pi(x) \geq \pi(i(v_0))$. The argument from the proof of Lemma \ref{lem:atleasttwo} shows that $\pi \circ i$ approaches its supremum along some infinite edge. This edge cannot be $e_1$ or $e_2$, as $\pi\circ i$ is \emph{decreasing} in the noncompact direction on these edges. 
\end{proof}

\begin{lemma}
\label{lem:graph-decomp}
Given $i : G \to \bR^2$ a planar immersion, there exists a sequence $i_j : G_j \to \bR^2$, $j= 1, \dots, N$ with the following properties. 
\begin{enumerate}
\item $i_j : G_j \to \bR^2$ is a planar immersion of the tropical graph $G_j$,
\item $i_N : G_N \to \bR^2$ equals $i : G \to \bR^2$.
\item There is a continuous embedding $G_j \to G_{j+1}$ such that $i_j = i_{j+1}|G_j$, and such that $G_{j+1}$ is obtained from $G_j$ by gluing a single trivalent vertex to $G_j$ along either one or two of the noncompact edges of $G_j$, and also extending some other noncompact edges of $G_j$.
\end{enumerate}
\end{lemma}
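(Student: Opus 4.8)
The plan is to argue by induction on the number of vertices of $G$, building $G$ up one trivalent vertex at a time and using Lemmas \ref{lem:atleasttwo} and \ref{lem:gprime} at each stage to strip off a vertex incident to an infinite edge. The base case is a graph consisting of a single trivalent vertex with three noncompact legs, for which one takes $N=1$ and $i_1=i$. One may assume $G$ has no component that is a bare circle (automatic for the $G_{\cT}$ of interest), and it suffices to treat connected $G$, the case relevant to the main theorem.

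For the inductive step, with $G$ connected and having at least two vertices, I would use Lemma \ref{lem:atleasttwo} to produce an infinite edge $e_0$ incident to a vertex $v_0$, and then run through the trichotomy for the local structure at $v_0$. The alternative in which $v_0$ carries three infinite edges is impossible here, since it would force $G$ to be the single-vertex base case, so $v_0$ lies in case \ref{case:oneinfinite} or case \ref{case:twoinfinite}. In either case let $G'$ be the graph of Lemma \ref{lem:gprime}, obtained by deleting $v_0$ together with its one, resp.\ two, incident infinite edges; the surviving finite edge(s) at $v_0$ then become noncompact edges of $G'$. One checks that $(G',\{p_e\})$ with the immersion $i|_{G'}$ is again a nondegenerate balanced tropical graph with planar immersion — immediate, since we removed only a vertex and edges that were already infinite, leaving the momenta and the balancing condition at all other vertices unchanged — and that it has strictly fewer vertices; the extra conclusion of Lemma \ref{lem:gprime}, that $G'$ has an infinite edge not previously incident to $v_0$, is what keeps $G'$ (and each of its components) within the scope of the inductive hypothesis.

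Applying the inductive hypothesis to $G'$ yields $i_1\colon G_1\to\bR^2,\dots,i_{N-1}\colon G_{N-1}\to\bR^2$ with $G_{N-1}=G'$ and $i_{N-1}=i|_{G'}$, and I would finish by setting $G_N=G$, $i_N=i$. By construction $G$ is recovered from $G'$ by gluing the single trivalent vertex $v_0$ back along exactly the one or two noncompact edges of $G'$ that came from the finite edges at $v_0$, with the infinite edge(s) of $v_0$ reappearing as fresh noncompact edges — a step of exactly the form in the statement. The one remaining point is to make the immersions literally restrict to one another: since a planar immersion need not be proper on its noncompact edges, before attaching $v_0$ one first extends the relevant noncompact edges of $G'$ so that their images reach $i(v_0)$, and, if one wants $i_{N-1}=i_N|_{G_{N-1}}$ to hold on the nose, extends the remaining noncompact edges as well; this is precisely the ``extending some other noncompact edges'' clause.

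I expect the main difficulty to be organisational rather than geometric: maintaining the compatibility $i_j=i_{j+1}|_{G_j}$ honestly through the truncation and re-extension of noncompact edges, and the bookkeeping when $G'$ becomes disconnected. For the latter, one should choose $v_0$ so as to keep $G'$ connected whenever possible — for instance preferring a vertex incident to two infinite edges, which has a single finite edge and hence is never a cut vertex of a trivalent graph — and, when a deletion is unavoidably separating, weave the construction so that the second piece is attached through the finite edge formerly at $v_0$ rather than adjoined as a disjoint summand, so that every intermediate $G_j$ still arises by the prescribed move. All the genuine content — the existence at every stage of a vertex that can be removed without violating balancing or nondegeneracy — is already supplied by Lemmas \ref{lem:atleasttwo} and \ref{lem:gprime}.
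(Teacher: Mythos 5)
Your peeling induction is a genuinely different route from the paper's. The paper does not induct on vertices at all: it chooses a generic linear projection $\pi:\bR^2\to\bR$ so that the vertex heights $\lambda_1<\dots<\lambda_n$ are distinct, and defines $G_j$ directly as the sublevel set $(\pi\circ i)^{-1}(-\infty,\lambda_j)$, so that the sweep order produces the whole filtration at once; the balancing condition (via the same maximum-principle computation as in Lemma \ref{lem:atleasttwo}) guarantees each vertex has at least one and at most two edges pointing ``downward,'' which is what makes each step a gluing along one or two noncompact edges. Your argument instead runs the construction backwards, deleting a boundary vertex at each stage via Lemmas \ref{lem:atleasttwo} and \ref{lem:gprime}. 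Both arguments rest on the same geometric input, but the sweep never has to ask whether a deletion disconnects the graph, which is exactly where your version runs into trouble.

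That disconnection issue is a genuine gap, not mere bookkeeping. In case \ref{case:oneinfinite} the deleted vertex $v_0$ may be a cut vertex, so $G'=G_1'\sqcup G_2'$, and the inductive hypothesis hands you two independent building sequences that cannot simply be concatenated: the first vertex of the second component's sequence would be glued along \emph{zero} noncompact edges of the part already built, which condition (3) forbids. Neither of your fixes closes this. A vertex with two infinite edges need not exist (nothing in the hypotheses prevents every boundary vertex from carrying exactly one infinite edge), and ``weaving'' is not an argument: to make it one, you would have to strengthen the inductive hypothesis to say that the building sequence of a connected graph can be forced to \emph{begin} at any prescribed boundary vertex (namely the neighbour $w_2$ of $v_0$ in $G_2'$), and then prove that strengthened statement — which in turn requires showing that a connected graph with at least two vertices always has at least two distinct boundary vertices, so that the peeling can avoid the prescribed one. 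That last claim needs another pass through the maximum-principle argument (if both infinite edges sat at a single vertex, nondegeneracy would place their momenta in a common open half-plane and the projection in that direction would attain its supremum at a vertex). None of this is supplied in your write-up, so as it stands the induction does not go through in the separating case; either carry out the strengthened induction, or switch to the paper's sweep, which avoids the problem by construction.
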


\begin{proof}
  Begin with $i : G \to \bR^2$, and once again choose a projection $\pi : \bR^2 \to \bR$. If $\pi$ is chosen generically, each fiber of $\pi \circ i$ will contain at most one vertex of $G$. Let the values of $\pi \circ i$ on the vertices be $\lambda_1,\lambda_2, \dots, \lambda_n \in \bR$. Then take $G_i = (\pi \circ i)^{-1}(-\infty,\lambda_i)$.
\end{proof}
\section{The induction}
\label{The induction}
This section contains the main induction that proves HMS.

For any oriented punctured surface $\Sigma$ equipped with a skeleton $X$, we associate the topological Fukaya category $\cF_\infty^{top}(X)$. Recall that we denote $S^1$ a ribbon graph consisting of type $\Lambda(0,0)$. For each puncture $p$ of $\Sigma$, we can define a restriction functor $$\cF_\infty^{top}(X) \to \cF_\infty^{top}(S^1)$$ 
as follows. If the graph $X$ contains a cycle corresponding to the puncture $p$, then $R$ is defined directly as an exceptional  restriction functor. If not, then $R$ is defined by first choosing another skeleton $X'$ that does have a cycle corresponding to the puncture $p$, and that is obtained from $X$ by a sequence of contractions and expansions. By Proposition \ref{prop:ribbon}, there is an equivalence $$
\Phi : \cF_\infty^{top}(X) \to \cF_\infty^{top}(X')$$
which is canonically associated with such a sequence of contractions and expansions.   Composing $\Phi$ with the closed restriction functor $S^{S^1}_\infty:\cF_\infty^{top}(X') \to \cF_\infty^{top}(S^1)$  gives the desired restriction functor. We first show that this functor does not depend on the choice of skeleton used to define it.

\begin{lemma}
  Let $X_1$ and $X_2$ be two skeleta for $\Sigma$ that both have cycles corresponding to the puncture $p$, and that are obtained from each other via a sequence of contractions and expansions. Then there is a commutative diagram
  \begin{equation}
    \xymatrix{
      \cF_\infty^{top}(X_1) \ar[r]^{\Phi} \ar[dr]_-{S_\infty^{S^1}}  & \cF_\infty^{top}(X_2) \ar[d]^-{S_\infty^{S^1}}\\
      & \cF_\infty^{top}(S^1)
    }  
  \end{equation}
  where $\Phi$ denotes the canonical equivalence, and $R$ denotes closed restriction maps.
\end{lemma}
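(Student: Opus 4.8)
The plan is to reduce the statement to the case of a single elementary move and then invoke the naturality of the closed restriction functor under contractions.

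\emph{Reduction to a single move.} Recall that $S_\infty^{S^1}$ denotes the closed (exceptional) restriction to the cycle $C_i\subset X_i$ at $p$, followed by the canonical identification $\cF_\infty^{top}(C_i)\simeq\cF_\infty^{top}(S^1)$ arising because $C_i$ is a subdivided circle. By Lemma \ref{lem:puncture-mod}(1) there is a sequence $X_1=Y_0,Y_1,\dots,Y_k=X_2$ in which each $Y_j$ is obtained from $Y_{j-1}$ by a single contraction or expansion and \emph{every} $Y_j$ has a cycle $C_{(j)}$ at $p$. Since the canonical equivalence is canonical, it depends only on the pair of endpoints, so $\Phi$ is also the composite of the canonical equivalences $\Phi_j\colon\cF_\infty^{top}(Y_{j-1})\to\cF_\infty^{top}(Y_j)$ along this sequence. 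It therefore suffices to prove $S_\infty^{S^1}\circ\Phi_j\simeq S_\infty^{S^1}$ for each $j$, and, replacing a move by its inverse, we may assume $\Phi_j=c_*$ for a contraction $c\colon Y_{j-1}\to Y_j$ whose source and target both carry a cycle at $p$.

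\emph{The single-move case.} Write $X=Y_{j-1}$, $X'=Y_j$, with cycles $C,C'$ at $p$, and let $F$ be the forest collapsed by $c$. Because $c$ extends over $\Sigma$ and restricts to a homeomorphism on the puncture-disk at $p$, it carries $C$ onto the boundary subgraph of the puncture-disk of $X'$ at $p$, which is the circle $C'$; in particular no component of $F$ can join two distinct points of $C$ by a path leaving $C$ (that would make $c(C)$ a wedge of circles), so $Z:=c^{-1}(C')$ is simply $C$ together with a collection of trees meeting $C$ in arcs or single vertices. Collapsing those trees gives a contraction $Z\to C$, which induces an equivalence $\cF_\infty^{top}(Z)\simeq\cF_\infty^{top}(C)$ identifying $S_\infty^{Z}$ with $S_\infty^{C}$ (using Proposition \ref{prop:compr} and the naturality of (co)restriction from \cite{D} — one factors $c$ through the graph obtained by collapsing just the arcs of $F$ lying in $C$). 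Since $c$ restricts to a contraction $X\setminus Z\to X'\setminus C'$, the cofiber sequences of Proposition \ref{prop:closed1} for $Z\subset X$ and $C'\subset X'$ fit into a morphism
$$\xymatrix{ \cF_\infty^{top}(X\setminus Z) \ar[r] \ar[d] & \cF_\infty^{top}(X) \ar[r]^-{S_\infty^{Z}} \ar[d]^-{c_*} & \cF_\infty^{top}(Z) \ar[d] \\ \cF_\infty^{top}(X'\setminus C') \ar[r] & \cF_\infty^{top}(X') \ar[r]^-{S_\infty^{C'}} & \cF_\infty^{top}(C'), }$$
whose right-hand square gives the compatibility of $c_*$ with the closed restrictions, the right vertical arrow being the equivalence induced by the contraction $Z\to C\to C'$ of subdivided circles. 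As that contraction merely collapses bivalent arcs, this arrow is the identity of $\cF_\infty^{top}(S^1)$ under the canonical identifications; composing, $S_\infty^{S^1}\circ c_*\simeq S_\infty^{S^1}$, which is the triangle for $\Phi_j$. Running over $j$ proves the lemma.

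The main obstacle is the single-move step, and specifically the claim that the exceptional restriction functor is natural with respect to contractions in the form used above — including the identification of $\cF_\infty^{top}(c^{-1}(C'))$ with $\cF_\infty^{top}(C)$ compatibly with the restriction from $X$. This is exactly where the hypothesis that \emph{both} $X_1$ and $X_2$ carry a cycle at $p$ enters essentially: it guarantees that $c$ never pinches the cycle, so that $c^{-1}(C')$ is a tree-thickening of $C$ and the square above is well posed.
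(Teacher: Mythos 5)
Your proof follows the same route as the paper: both reduce to a chain of elementary moves preserving the cycle at $p$ via Lemma \ref{lem:puncture-mod}(1), and the paper then simply asserts that the triangle commutes at each step. Your single-move analysis (that $c^{-1}(C')$ is a tree-thickening of $C$ because a cycle-preserving contraction cannot pinch $C$, plus the induced morphism of the cofiber sequences of Proposition \ref{prop:closed1}) is a correct filling-in of exactly the step the paper leaves implicit, so the two arguments agree in substance.
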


\begin{proof}
  This is an application of Lemma \ref{lem:puncture-mod}. Since we an arrange that the contractions and expansions that implement $\Phi$ do not destroy the cycle at $p$, at every step the desired commutative diagram both makes sense and holds true.
\end{proof}

\begin{definition}
Let $p$ be a puncture of $\Sigma$. We denote
$$
R_p: \cF_\infty^{top}(X) \to \cF_\infty^{top}(S^1)
$$
the corresponding restriction functor. 
\end{definition}

By Definition \ref{defff}, for any nondegenerate balanced graph with planar immersion $G$, we have a matrix-factorization-type category $\cB(G)$. For each external edge of $G$, there is a restriction functor $\cB(G) \to \cB(E)$, where $E$ is the graph consisting of a single bi-infinite edge. We can associate to the graph $G$ a punctured Riemann surface 
 $\Sigma(G)$ in a way that generalizes the familiar correspondence between an algebraic curve and its tropicalization. Namely, each vertex of $G$ corresponds to a pair of pants, while the edges correspond to cylinders: the graph $G$ encodes the way in which the pairs of pants are glued along cylinders. Then the genus of $\Sigma(G)$ is  equal to the number of relatively compact connected components in $\bR^2-G$, and the number of punctures is given by the number of infinite edges of $G$.

Now we come to the main result, that category $\cF_\infty^{top}(X)$ is equivalent to the category $\cB(G)$ (see Definition \ref{defff}), where $X$ is a skeleton for $\Sigma(G)$. Since our method involves successively gluing pairs of pants inductively, we must include in the induction a statement on the restriction maps at the punctures.

\begin{theorem}
\label{the induction}
  If $X$ is a skeleton for $\Sigma(G)$, then there is an equivalence of categories $\Psi: \cF_\infty^{top}(X) \to \cB(G)$ with the property that for each infinite edge $e$ of $G$, and corresponding puncture $p(e)$, there is a commutative diagram
  \begin{equation}
    \xymatrix{
      \cF_\infty^{top}(X) \ar[r]^\Psi \ar[d]_{R_{p(e)}} & \cB(G)\ar[d]^-{R^{ \cB } } \\
      \cF_\infty^{top}(S^1) \ar[r]^\Psi & \cB(e)
    }
  \end{equation}
  where the vertical arrows are restriction functors.
\end{theorem}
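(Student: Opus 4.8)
The plan is to induct on the number of vertices of $G$, using the decomposition of a planar immersion furnished by Lemma \ref{lem:graph-decomp}. Write $G_1 \subset G_2 \subset \cdots \subset G_N = G$ for the sequence of subgraphs, so that $G_{j+1}$ is obtained from $G_j$ by gluing in a single trivalent vertex along one or two of the noncompact edges of $G_j$ (and extending some other noncompact edges). The base case is $G_1$, a single trivalent vertex with three infinite edges: here $\cB(G_1) = MF^\infty(\bA^3_\kappa, xyz)$, and $\Sigma(G_1)$ is the pair of pants. One checks directly that $\cF^{top}_\infty$ of a $\Theta$-graph (a skeleton for the pair of pants with a cycle at each puncture) is equivalent to $\cB(G_1)$, compatibly with the three restriction functors to $\cF^{top}_\infty(S^1) \simeq \cQ Coh^{(2)}(\bG_m) \simeq \cB(e)$; this is essentially Lemma \ref{lem:local} in the case $\Lambda(n_1,0)\cup\Lambda(0,n_2)$ together with Proposition \ref{prop:resres}, and is where the genus-zero, three-punctured case of \cite{STZ} enters.

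For the inductive step, suppose $\Psi_j : \cF^{top}_\infty(X_j) \to \cB(G_j)$ is an equivalence compatible with all puncture restrictions, where $X_j$ is some skeleton for $\Sigma(G_j)$. Passing from $G_j$ to $G_{j+1}$ amounts, on the level of surfaces, to an end connect sum of $\Sigma(G_j)$ with a pair of pants $\Sigma_{0,3}$ along one or two punctures (Lemma \ref{lem:puncture-decomposition} and the discussion of end connect sums), and on the level of $\cB$ to a homotopy fiber product: by Definition \ref{defff} and the fact that $\cB(-)$ is computed as a limit over the \v{C}ech diagram of vertices and edges, $\cB(G_{j+1})$ is the fiber product of $\cB(G_j)$ and $\cB(\Theta\text{-vertex})$ over $\cB(e)$ (or over $\cB(e_1)\times\cB(e_2)$ in case \ref{case:twoinfinite}), the maps being the restriction functors $R^\cB$ at the glued edges. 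The matching statement on the Fukaya side is exactly Theorem \ref{gluing-main}: I choose the skeleton $X_{j+1}$ for $\Sigma(G_{j+1})$ so that it is the union of two good closed subgraphs $Z_1, Z_2$ with $Z_1$ a skeleton for (a deformation of) $\Sigma(G_j)$ carrying a cycle at the puncture(s) being glued, $Z_2$ the $\Theta$-graph for the new pair of pants, and $Z_1\cap Z_2$ a disjoint union of circles — precisely the cycles at the relevant punctures. Theorem \ref{gluing-main} then presents $\cF^{top}_\infty(X_{j+1})$ as the homotopy fiber product of $\cF^{top}_\infty(Z_1)$ and $\cF^{top}_\infty(Z_2)$ over $\cF^{top}_\infty(Z_1\cap Z_2)$, with the legs being the exceptional restrictions, which by Proposition \ref{prop:compr} agree with the puncture restriction functors $R_p$. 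Now $\Psi_{j+1}$ is defined as the map induced on fiber products by $\Psi_j$, the base case equivalence for the $\Theta$-vertex, and the identity on $\cF^{top}_\infty(S^1)\simeq\cB(e)$; it is an equivalence because fiber products preserve equivalences of diagrams, and it is compatible with the remaining puncture restrictions because those restrictions factor through one of the two pieces (this uses Lemma \ref{lem:puncture-mod}(1) to know the construction is independent of the intermediate skeleta, and Proposition \ref{prop:partial}/\ref{prop:compr} for the compatibility of exceptional restrictions with corestrictions).

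There is one piece of bookkeeping that must be handled carefully: the surface $\Sigma(G_j)$ may change its skeleton at each step, because to perform the gluing I need $X_j$ to have disjoint cycles at exactly the punctures of $G_j$ where the next vertex attaches, and $G_{j+1}$ may attach along edges whose punctures were not cycle-punctures of the previously chosen $X_j$. This is dealt with using Lemma \ref{lem:puncture-mod}: any two skeleta with a cycle at a given puncture are connected by contractions and expansions preserving that cycle, and given a skeleton with a cycle at $p$ one can produce a cycle at a second puncture $p'$ through moves preserving the cycle at $p$. So before gluing I first modify $X_j$, within its equivalence class, to a skeleton with disjoint cycles at the required punctures, invoking the independence lemma to carry $\Psi_j$ along, and I track the non-compact edges using the tropical topology of Section \ref{sec:tropical-top} (Lemmas \ref{lem:atleasttwo}, \ref{lem:gprime}) to guarantee that $G_{j+1}$ always retains an infinite edge not incident to the newly glued vertex — this is what ensures that we never need to glue along all three edges of a vertex, so the pair of pants is always attached along one or two cylinders, matching the hypothesis $Z_1\cap Z_2 = \coprod S^1$ of Theorem \ref{gluing-main}.

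The main obstacle I anticipate is not any single homotopy-coherence issue but the coordination of the two inductions — the combinatorial recursion on $G$ via Lemma \ref{lem:graph-decomp} and the geometric recursion of deforming the skeleton on $\Sigma(G)$ — so that at each stage the hypotheses of Theorem \ref{gluing-main} are literally met (good closed subgraphs, intersection a union of circles) and the identification of exceptional restrictions with the intrinsically-defined $R_p$ via Proposition \ref{prop:compr} is maintained. In particular one must check that the equivalence $\Phi$ produced by a sequence of contractions/expansions intertwines exceptional restrictions to the cycle at $p$ on both sides (this is the content of the lemma preceding the Definition of $R_p$, and uses that the moves can be chosen to preserve the cycle), and that the two \v{C}ech-type limits defining $\cB(G_{j+1})$ and $\cF^{top}_\infty(X_{j+1})$ are compared through the \emph{same} index diagram — which is why Theorem \ref{gluing-main} and Definition \ref{defff} have been set up in parallel. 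Once this scaffolding is in place the passage from step $j$ to step $j+1$ is formal.
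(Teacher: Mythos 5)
Your proposal follows essentially the same route as the paper's proof: induction over the filtration furnished by Lemma \ref{lem:graph-decomp}, matching the fiber product defining $\cB(G_{j+1})$ against the closed-cover fiber product of Theorem \ref{gluing-main}, and using Lemma \ref{lem:puncture-mod} together with Proposition \ref{prop:compr} to coordinate the choice of skeleta with the puncture restriction functors. The one point to adjust is the base case: the equivalence $\cF^{top}_\infty(\Theta)\simeq MF^\infty(\bA^3_\kappa, xyz)$ for the pair of pants does not follow from Lemma \ref{lem:local} and Proposition \ref{prop:resres} (the mirror statement in Lemma \ref{lem:local} concerns $\cQ Coh^{(2)}(\bP^1(n_1,n_2))$, not the LG model $(\bA^3_\kappa, xyz)$); it is a genuine external input, for which the paper cites Theorem 1.13 of \cite{N4}.
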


\begin{proof}
  We may regard the graph $G$ as being constructed from a collection of trivalent vertices by gluing infinite edges to each other. By Lemma \ref{lem:graph-decomp}, there is a collection of graphs $G_i$, $i = 1,\dots, N$ such that $G_N = G$, and $G_{i+1}$ is obtained from $G_i$ by gluing a single trivalent vertex to either one or two infinite edges of $G_i$ (but not at all three edges simultaneously).

We shall prove the assertions in the theorem by induction on $i$. In the base case $i = 1$, we are simply considering the pair of pants, for which the result is known. See for instance Theorem 1.13 of \cite{N4}. 

For the induction step, the induction hypothesis is the statement of the theorem for $G_i$. In passing from $G_i$ to $G_{i+1}$, we attach a trivalent vertex $T$; correspondingly, $\Sigma(G_{i+1})$ is obtained from $\Sigma(G_i)$ by attaching a pair of pants $\Sigma(T)$. Now there are two cases, depending on whether the gluing involves one edge or two. 

\emph{Case of one edge}: Denote $e$ the edge along which $G_i$ is glued to  $T$.  Then both $\Sigma(G_i)$ and 
$\Sigma(T)$ have a puncture corresponding to $e$: we denote in the same way, namely $p(e)$, the corresponding puncture on $\Sigma(G_i)$ and the puncture 
on $\Sigma(T)$. We may choose skeleta $X$ for $\Sigma(G_i)$ and $Y$ for $\Sigma(T)$ such that both $X$ and $Y$ have a cycle, respectively, at the puncture $p(e)$ of $\Sigma(G_i)$ 
and at the puncture $p(e)$ of $\Sigma(T)$. We then have a diagram
\begin{equation}
\xymatrix{
  \cF_\infty^{top}(X) \ar[r]^{R_{p(e)}} \ar[d]_\Psi &  \cF_\infty^{top}(S^1) \ar[d]_\Psi &  \cF_\infty^{top}(Y) \ar[l]_{R_{p(e)}} \ar[d]_\Psi\\
  \cB(G_i) \ar[r]^{R^{ \cB } } & \cB(e) & \cB(T) \ar[l]_{R^{ \cB } } \\
}
\end{equation}
where the horizontal arrows are restriction functors, and the vertical arrows are the equivalences given by the induction hypothesis. The fact that both squares commute is also part of the induction hypothesis. This equivalence of diagrams implies the equivalence of  fiber products:
\begin{equation}
  \xymatrix{
    \cF_\infty^{top}(X\coprod_{S^1} Y) \ar[r] \ar@/^1.5pc/[rr]^-\Psi \ar[d] & \cF_\infty^{top}(Y) \ar[d]^-{R_{p(e)}} \ar@/^1.5pc/[rr]^-\Psi & \cB(G_i\coprod_E T) \ar[r] \ar[d] & \cB(T) \ar[d] \\
    \cF_\infty^{top}(X) \ar[r]^-{R_{p(e)}} \ar@/_1.3pc/[rr]_-\Psi &  \cF_\infty^{top}(S^1) \ar@/_1.3pc/[rr]_-\Psi & \cB(G_i) \ar[r]  & \cB(e)\\
  }
\end{equation}
In the diagram above, the squares are fiber products, and the curved arrows are equivalences of categories. In particular, since $G_{i+1} \coprod_E T = G_i$, and $X \coprod_{S_1} Y$ is a skeleton for $\Sigma(G_{i+1})$, we have an equivalence
\begin{equation}
  \Psi: \cF_\infty^{top}(\Sigma(G_{i+1})) \to \cB(G_{i+1})
\end{equation}
To complete the proof of the induction step, we must also consider the restriction functors to the punctures of $\Sigma(G_{i+1})$. On the $\cB$-side, the infinite edges $G_{i+1}$ correspond to infinite edges of $G_i$ and $T$, minus the edge  $e$ that  we glue along. For each infinite edge $e'$ of $G_{i+1}$, we have a restriction functor $R^{ \cB }_\infty : \cB(G_{i+1}) \to \cB(e')$. This functor factors through either $\cB(G_i)$ or $\cB(T)$, according to whether $e'$ comes from $G_i$ or $T$. On the $\cF$-side, we have a corresponding restriction functor $R_{p(e')} : \cF_\infty^{top}(X \coprod_{S^1} Y) \to \cF_\infty^{top}(S^1)$. Strictly speaking, the definition of this functor requires choosing a skeleton for $\Sigma(G_{i+1})$ that has a cycle at the puncture $p(e')$, and $X\coprod_{S^1} Y$ may not have this property (and furthermore it is impossible for it to have this property with respect to every puncture simultaneously). The solution is Lemma \ref{lem:puncture-mod}, which says that we can modify either $X$ or $Y$ only in order to achieve that $X\coprod_{S^1}Y$ also has a cycle at $p(e')$. Since this modification can be implemented on $X \coprod_{S^1} Y$ simultaneously, we find that the restriction to $p(e')$ factors through the the closed restriction to either $X$ or $Y$. If the puncture $p(e')$ comes from $\Sigma(G_i)$, there is therefore a commutative diagram of closed restriction functors
\begin{equation}
  \xymatrix{
    \cF_\infty^{top}(X\coprod_{S^1}Y) \ar[r]^-{S^{X}_\infty} \ar[dr]_{R_{p(e')}}
    & \cF_\infty^{top}(X) \ar[d]^{R_{p(e')}}\\
    & \cF_\infty^{top}(S^1)\\
  }
\end{equation}
In the case that $p(e')$ comes from $T$, the same diagram holds with $Y$ in place of $X$ in the upper-right node. Comparing the two sides, we have a diagram
\begin{equation}
  \xymatrix{
     \cF_\infty^{top}(X\coprod_{S^1}Y) \ar[r] 
     \ar[dr]_{R_{p(e')}} \ar@/^1.5pc/[rr]^\Psi
    & \cF_\infty^{top}(X) \ar[d]^ {R_{p(e')}} \ar[d]\ar@/^1.5pc/[rr]^\Psi 
    & \cB(G_i\coprod_E T) \ar[r] \ar[dr] 
    & \cB(G_i) \ar[d]\\
    & \cF_\infty^{top}(S^1) \ar@/^1.5pc/[rr]^\Psi
    & & \cB(e')\\
  }
\end{equation}
In this diagram, the curved $\Psi$ arrows (which are equivalences) commute form commutative squares with the horizontal and vertical arrows, and therefore they also form a commutative square with the diagonal arrows. This establishes the desired compatibility between restriction functors to infinite edges of $G_{i+1}$ with restrictions to punctures of $\Sigma(G_{i+1})$.

\emph{Case of two edges}:
Let $e_1$ and $e_2$ be the two edges along which 
$G_i$ and $T$ are glued. As before, we denote 
$p(e_1)$ and $p(e_2)$ both the two punctures on 
$\Sigma(G_i)$, and 
the two punctures on 
$\Sigma(T)$, that correspond to $e_1$ and $e_2$. Choose a skeleton $X$ for $\Sigma(G_i)$ that has disjoint cycles at the punctures $p(e_1)$ and $p(e_2)$. Choose a skeleton $Y$ for $T$ that has disjoint cycles at the punctures $p(e_1)$ and $p(e_2)$ (this $Y$ is necessarily a dumbbell graph). The argument proceeds as before, but we glue $X$ to $Y$ along $S^1 \coprod S^1$, and $G_i$ to $T$ along $e_1 \coprod e_2$. Thus we have a diagram
\begin{equation}
  \xymatrix{
    \cF_\infty^{top}(X\coprod_{S^1\coprod S^1} Y) \ar[r] \ar@/^1.5pc/[rr]^-\Psi \ar[d] & \cF_\infty^{top}(Y) \ar[d] \ar@/^1.5pc/[rr]^-\Psi & \cB(G_i\coprod_{E\coprod E} T) \ar[r] \ar[d] & \cB(T) \ar[d] \\
    \cF_\infty^{top}(X) \ar[r] \ar@/_1.5pc/[rr]_-\Psi &  \cF_\infty^{top}(S^1 \coprod S^1) \ar@/_1.5pc/[rr]_-\Psi & \cB(G_i) \ar[r]  & \cB(e_1 \coprod e_2)\\
  }
\end{equation}
where the two squares are  fiber products and the curved arrows are equivalences. 

It remains to analyze the restriction functors. If $e'$ is an infinite edge of $G_{i+1} = G_i \coprod_{E\coprod E} T$, then the restriction to $e'$ factors through restriction either to $G_i$ or $T$. Similarly, we claim that the restriction from $\cF_\infty^{top}(X \coprod_{S^1 \coprod S^1} Y)$ factors through restriction either to $X$ or $Y$. The only issue here is that we may not be able to choose a skeleton that has disjoint cycles at three punctures simultaneously. This occurs when we consider the third puncture of $\Sigma(T)$, since $Y$ is a dumbbell graph, or if $X$ has only three punctures. On the other hand, the modification we need to do in order to produce a puncture at $p(e')$ can be localized in a neighborhood of either $X$ or $Y$ inside $X \coprod_{S^1 \coprod S^1} Y$. Since an open restriction followed by a closed restriction is a closed restriction (see Proposition \ref{prop:compr}), it suffices to understand the closed restriction functor from a neighborhood of $X$ or $Y$ to the puncture. After restricting to a small enough neigborhood of $X$ or $Y$, the closed restriction to $X$ or $Y$ consists then of merely removing some noncompact edges of the skeleton, and it makes no difference whether we do this before or after modifying the skeleton. Thus the restriction the puncture $p(e')$ factors through restriction first to $X$ or $Y$. The rest of the argument is the same as in the previous case.
\end{proof}

We are now ready to prove our main theorem. We use the notations of Section \ref{sec tcy}. 
Let $(X_\cT, W_\cT)$ be a toric Calabi-Yau LG model, and let $\Sigma_\cT$ be the mirror curve.

\begin{theorem}
\label{mainmainmain}
There is an equivalence of categories
$$
MF(X_\cT, W_\cT) \simeq Fuk^{top}(\Sigma_\cT). 
$$
\end{theorem}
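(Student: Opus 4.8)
The plan is to deduce Theorem~\ref{mainmainmain} by combining the two ``combinatorial'' descriptions of the $A$- and $B$-sides that have been set up in the preceding sections. On the $B$-side, Theorem~\ref{gluing MF} gives an equivalence $MF^\infty(X_\cT, W_\cT) \simeq \cB(G_\cT)$, where $G_\cT$ is the tropical curve dual to the triangulation $\cT$. On the $A$-side, recall from the discussion of Hori--Vafa mirror symmetry that the mirror curve $\Sigma_\cT = \Sigma(G_\cT)$ has $G_\cT$ as its tropicalization, so that $G_\cT$ is a nondegenerate balanced tropical graph with planar immersion in the sense of Section~\ref{sec:tropical-top}. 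Choosing any skeleton $X$ for $\Sigma_\cT$, Theorem~\ref{the induction} then produces an equivalence $\cF_\infty^{top}(X) \simeq \cB(G_\cT)$. Composing, we obtain
$$
\cF_\infty^{top}(X) \simeq \cB(G_\cT) \simeq MF^\infty(X_\cT, W_\cT)
$$
in $\widehat{dgCat^{(2)}}$. Since $\cF_\infty^{top}(X) \simeq \cF^{top}_\infty(X')$ for any other skeleton $X'$ (the independence of skeleton built into the formalism of Section~\ref{sec top Fuk}), this equivalence depends only on $\Sigma_\cT$, and by definition $\cF^{top}_\infty(X) = Fuk^{top}_\infty(\Sigma_\cT)$.

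\textbf{Passing to small categories.}
The equivalence above is between the large (Ind-completed) categories. To obtain the stated equivalence $Fuk^{top}(\Sigma_\cT) \simeq MF(X_\cT, W_\cT)$ of small $\bZ_2$-graded dg categories, I would restrict to compact objects. By Proposition~\ref{prop:I=Ind} we have $\cF^{top}_\infty(X) \simeq Ind(\cF^{top}(X))$, so the compact objects of the left-hand side recover $Fuk^{top}(\Sigma_\cT) = \cF^{top}(X)$ (up to idempotent completion, which is harmless in $dgCat^{(2)}$). On the right-hand side, $MF^\infty(X_\cT, W_\cT) = Ind(MF(X_\cT, W_\cT))$ by definition, so its compact objects recover $MF(X_\cT, W_\cT)$; here one uses that $MF(X_\cT, W_\cT)$ is already idempotent complete, which holds because $X_\cT$ is smooth of finite type and $W_\cT$ is flat. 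An equivalence of presentable categories carries compact objects to compact objects, so applying $(-)^\omega$ to the chain of equivalences yields the desired equivalence of small categories.

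\textbf{Main obstacle.}
All of the analytic and categorical work has been front-loaded into Theorem~\ref{the induction} (the inductive gluing of pairs of pants, resting in turn on Theorem~\ref{gluing-main} and the surface-topology lemmas of Section~\ref{surface-topology}) and into Theorem~\ref{gluing MF} (Zariski descent for $MF^\infty$). So the proof of Theorem~\ref{mainmainmain} itself is essentially a bookkeeping step: one must check that the tropical graph $G_\cT$ appearing on the $B$-side (as the dual graph of $\cT$) is literally the same balanced tropical graph with planar immersion to which Theorem~\ref{the induction} applies on the $A$-side, i.e.\ that the combinatorial datum $\cB(G_\cT)$ built from $\cT$ coincides with the category $\cB(G)$ built from the tropicalization of $\Sigma_\cT$. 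This identification is immediate from the standard dictionary between unimodular triangulations of $P$, their dual tropical curves, and the Newton-polygon description of $\Sigma_\cT \subset \bC^*\times\bC^*$ reviewed in Section~\ref{Hori-Vafa}; the only mild subtlety is matching the planar embedding (needed to define the cyclic orderings at vertices of $G_\cT$, hence the local categories $\cB(v)$ and restriction functors) with the planar immersion used in Section~\ref{sec:tropical-top}, and this is guaranteed by the fact that both are induced by the fixed embedding of $P$ in $\widetilde N_\bR$. Once this is in place, there is nothing further to prove.

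\begin{proof}
By Theorem~\ref{gluing MF} there is an equivalence $MF^\infty(X_\cT, W_\cT) \simeq \cB(G_\cT)$ in $dg\cP r^{(2),L}$, where $G_\cT$ is the tropical curve dual to $\cT$. On the other hand, $G_\cT$ is the tropicalization of the mirror curve $\Sigma_\cT$, and with its planar structure induced by the embedding of $P$ it is a nondegenerate balanced tropical graph with planar immersion in the sense of Section~\ref{sec:tropical-top}; moreover $\Sigma_\cT = \Sigma(G_\cT)$. Choosing any skeleton $X$ for $\Sigma_\cT$, Theorem~\ref{the induction} gives an equivalence $\cF_\infty^{top}(X) \simeq \cB(G_\cT)$. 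Combining,
$$
Fuk^{top}_\infty(\Sigma_\cT) = \cF_\infty^{top}(X) \simeq \cB(G_\cT) \simeq MF^\infty(X_\cT, W_\cT).
$$
Passing to compact objects on both sides, and using Proposition~\ref{prop:I=Ind} together with $MF^\infty(X_\cT, W_\cT) = Ind(MF(X_\cT, W_\cT))$ and the idempotent completeness of $MF(X_\cT, W_\cT)$, we obtain
$$
Fuk^{top}(\Sigma_\cT) \simeq MF(X_\cT, W_\cT).
$$
\end{proof}
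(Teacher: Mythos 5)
Your proposal is correct and follows exactly the paper's own argument: combine Theorem~\ref{gluing MF} and Theorem~\ref{the induction} to get $MF^\infty(X_\cT,W_\cT)\simeq \cB(G_\cT)\simeq Fuk^{top}_\infty(\Sigma_\cT)$, then restrict to compact objects to recover the small categories. The extra care you take in identifying the two appearances of $G_\cT$ and in justifying the passage to compact objects is consistent with, and slightly more detailed than, what the paper writes.
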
 
\begin{proof}
Let $G_\cT$ be the tropical curve dual to the triangulation $\cT$. Recall that $Fuk^{top}_\infty(\Sigma_\cT)$ denotes the Ind-completion of $Fuk^{top}(\Sigma_\cT)$. By Theorem \ref{gluing MF} and Theorem \ref{the induction} there are equivalences 
$$
MF^\infty(X_\cT, W_\cT) \simeq \cB(G_\cT) \simeq Fuk^{top}_\infty(\Sigma_\cT). 
$$
They restrict to an equivalence between the categories of  compact objects inside $MF^\infty(X, W)$ and $Fuk_{top}^\infty(\Sigma_\cT)$ 
$$
MF(X_\cT, W_\cT) \simeq  Fuk^{top}(\Sigma_\cT), 
$$
and this concludes the proof. 
\end{proof}

\begin{remark}
Let $\Sigma_\cT$ be an unramified  
cyclic cover of a punctured surface and let $Fuk^{wr}(\Sigma_\cT)$ be the wrapped Fukaya category. By \cite{AAEKO}  there is an equivalence 
$$
Fuk^{wr}(\Sigma_\cT) \simeq MF(X_\cT, f_\cT). 
$$ 
Together with Theorem \ref{mainmainmain}, this yields an equivalence 
$$
Fuk^{wr}(\Sigma_\cT) \simeq Fuk^{top}(\Sigma_\cT). 
$$
This establishes Kontsevich's claim \cite{Ko}, according to which the topological Fukaya category is equivalent the wrapped Fukaya category, for a large class of punctured Riemann surfaces. 
In her thesis Lee \cite{Le} extends the results of \cite{AAEKO} to all genera. This, combined with  Theorem \ref{mainmainmain}, gives a complete proof of  Kontsevich's claim for punctured surfaces. 
A different proof of this result, with different methods, was given in \cite{HKK}. 
\end{remark}

\begin{remark}
\label{lastremark}
Let us return to the picture of HMS for for partially wrapped Fukaya categories delineated in Remark \ref{firstlastremark}. As we explained there, the semi-infinite edges of a non-compact 
skeleton $\widetilde{S}$ of $\Sigma_\cT$  correspond to a smooth  stacky partial compactification $\widetilde{X}_\cT$ of $X_\cT$ with the following  property: each puncture $p$ of 
$\Sigma_\cT$ corresponds to an irreducible component 
$\mathbb{A}^1$ of the singular locus of $W_\cT$.\footnote{Such a compactification is not unique, but this does not affect the present discussion: any choice of these compactifications will give rise to the same derived category of singularities.} If $S$ has $n$ non-compact edges approaching the puncture $p$, then we compactify that copy of $\mathbb{A}^1$  
to a stacky rational curve $\mathbb{P}^1(1,n)$. That is, we cap the $\mathbb{A}^1$  with a copy of $[\mathbb{A}^1/\mu_n]$.

Let $(X_\cT)_0$ be the zero-fiber of $X_{\cT}$, and denote 
by $(\widetilde{X}_\cT)_0$ its compactification inside $\widetilde{X}_\cT$. As we explained in Remark \ref{firstlastremark}, we expect that the general statement of HMS   for the partially wrapped category $\cF(\widetilde{S})$ can be formulated as an equivalence 
\begin{equation}
\label{partwraphmstt2}
\cF(\widetilde{S}) \simeq D^b_{sg}((\widetilde{X}_\cT)_0). 
\end{equation}
Equivalence (\ref{partwraphmstt2}) should follow from our Theorem \ref{mainmainmain} and exotic gluing (Theorem \ref{gluing-main}). Assume for simplicity that the non-compact edges of $\widetilde{S}$ approach  a unique puncture $p$ of $\Sigma_\cT$: the general case is proved via an iteration of the argument. Removing the non-compact edges from $\widetilde{S}$ yields a compact skeleton $S$ of $\Sigma_\cT$. By modifying $\widetilde{S}$, and hence $S$, in the interior of $\Sigma_{\cT}$, it is possible to arrange that $S$ has a cycle at $p$, and that $\widetilde{S}$ consists of $S$ with several non-compact edges approaching $p$. The derived category of singularities should enjoy the same descent properties as the category of matrix factorizations: although this result as such does not seem to appear in the literature, it should follow from the techniques of \cite[Appendix A]{P}. Then Zariski descent for $D^b_{sg}(-)$ and Theorem \ref{gluing-main}   give, respectively,  the following two equivalences 
$$
D^b_{sg}((\widetilde{X}_\cT)_0) \simeq 
D^b_{sg}( (X_\cT)_0) \times_{\Perf(\mathbb{G}_m)} \Perf([\mathbb{A}^1/\mu_n])
$$
$$
\cF(\widetilde{S}) \simeq  Fuk^{top}(\Sigma_\cT) \times_{\cF(\Lambda(0,0))} \cF(\Lambda(0,n))
$$
By Theorem \ref{mainmainmain} we have that $D^b_{sg}( (X_\cT)_0) \simeq MF( X_\cT,  W_\cT) \simeq Fuk^{top}(\Sigma_\cT) $. Additionally there is an equivalence $\Perf([\mathbb{A}^1/\mu_n]) \simeq \cF(\Lambda(0,n))$. Thus we can deduce 
equivalence (\ref{partwraphmstt2}) exactly as in the proof of Theorem \ref{mainmainmain}.
\end{remark}

\end{document}